\newcommand{\doi}[1]{\url{https://doi.org/#1}}
\newcommand{\isbn}[1]{\url{https://isbnsearch.org/isbn/#1}}
\newcommand{\arxiv}[1]{\href{https://arxiv.org/abs/#1}{preprint arXiv:#1}}
\newcommand{\web}[1]{\url{#1}}
\renewcommand{\emptyset}{\varnothing}
\numberwithin{equation}{section}
\newtheorem{thm}{Theorem}[section]
\newtheorem{lemma}[thm]{Lemma}
\newtheorem{prop}[thm]{Proposition}
\newtheorem{rem}[thm]{Remark}
\newtheorem{df}[thm]{Definition}
\newtheorem{ex}[thm]{Example}
\newtheorem{nex}[thm]{Non-Example}
\newcommand{\C}{\mathbb{C}}
\newcommand{\Z}{\mathbb{Z}}
\newcommand{\N}{\mathbb{N}}
\newcommand{\R}{\mathbb{R}}
\newcommand{\inner}[1]{\left<#1\right>}
\newcommand{\ket}[1]{\left|#1\right>}
\newcommand{\mv}[1]{\smash[b]{\underline{#1}}}
\newcommand{\id}{\mathrm{id}}
\newcommand{\verteq}{\rotatebox{90}{$\,=$}}
\newcommand{\equalto}[2]{\underset{\displaystyle\overset{\mkern3mu\verteq}{#2}}{#1}}
\def\@tocline#1#2#3#4#5#6#7{\relax
  \ifnum #1>\c@tocdepth 
  \else
    \par \addpenalty\@secpenalty\addvspace{#2}%
    \begingroup \hyphenpenalty\@M
    \@ifempty{#4}{%
      \@tempdima\csname r@tocindent\number#1\endcsname\relax
    }{%
      \@tempdima#4\relax
    }%
    \parindent\z@ \leftskip#3\relax \advance\leftskip\@tempdima\relax
    \rightskip\@pnumwidth plus4em \parfillskip-\@pnumwidth
    #5\leavevmode\hskip-\@tempdima
      \ifcase #1
       \or\or \hskip 1em \or \hskip 2em \else \hskip 3em \fi%
      #6 \hskip 0.5em \nobreak\relax
    \dotfill\hbox to\@pnumwidth{\@tocpagenum{#7}}\par
    \nobreak
    \endgroup
  \fi}
\begin{document}
\leftmargini=2em

\title{\vspace*{-1cm}Quantum spheres as graph C*-algebras: a review}

\author[F.~D'Andrea]{\vspace*{-5mm}Francesco D'Andrea}

\address{Dipartimento di Matematica e Applicazioni ``R.~Caccioppoli'' \\ Universit\`a di Napoli Federico II \\
Complesso MSA, Via Cintia, 80126 Napoli, Italy}

\subjclass[2020]{Primary: 20G42; Secondary: 58B32; 46L89.}

\keywords{Quantum groups, Vaksman-Soibelman quantum spheres, Leavitt path algebras, graph C*-algebras.}

\begin{abstract}
In this survey, we discuss the description of Vaksman-Soibelman quantum spheres
using graph C*-algebras, following the seminal work of Hong and Szyma{\'n}ski.
We give a slightly different proof of the isomorphism with a graph C*-algebra, borrowing the idea of Mikkelsen and Kaad of using conditional expectations to prove the desired result.
\end{abstract}

\maketitle

\begin{center}
\begin{minipage}{0.8\textwidth}
\parskip=0pt\small\tableofcontents
\end{minipage}
\end{center}

\bigskip

\section{Introduction}
Quantum spheres are among the most studied examples of quantum homogeneous spaces.
The first examples are due to Podle\'s \cite{Pod87}, who introduced his well-known 2-dimensional examples as homogeneous spaces for Woronowicz's quantum $SU(2)$ group \cite{Wor80}. A few years later, Vaksman and Soibelman introduced quantum spheres of arbitrary odd dimension \cite{VS91} as homogeneous spaces of higher dimensional quantum unitary groups \cite{Wor87}.
They are described by a $\Z$-graded algebra whose degree $0$ part we interpret as algebra of functions on a quantum complex projective space (for a review of the geometry of quantum projective spaces, one can see e.g.~\cite{DL12}).
A main breakthrough in their study was the realization that the C*-enveloping algebras are graph C*-algebras \cite{HS02}.

The aim of this survey is to give a detailed proof of the isomorphism between the universal C*-algebra of a Vaksman-Soibelman quantum sphere and a graph C*-algebra. 
Rather than the original proof in \cite{HS02}, here we follow the idea of Mikkelsen and Kaad \cite{MK22} of using conditional expectations to prove the desired isomorphism.

Historically, the $2n+1$ dimensional quantum sphere $S^{2n+1}_q$ was introduced as a quantum homogeneous spaces of $SU_q(n+1)$. In this review, however, we avoid on purpose talking about $SU_q(n+1)$ because we want to make the paper self-contained, and we prefer not to rely on results about $SU_q(n+1)$ to prove the desired results about quantum spheres.
The paper is intended to be a pedagogical review, and it is written in an elementary style in order to make it more accessible to a wide audience.

The plan of the paper is as follows.
In Sect.~\ref{sec:2}, we recall some basic facts about group actions on C*-algebras and conditional expectations (Sect.~\ref{sec:21}), and universal C*-algebras (Sect.~\ref{sec:22}). Those who are familiar with \mbox{C*-algebras} can readily skip this part. We also briefly recall the notions of Leavitt path algebra and of graph C*-algebra (Sect.~\ref{sec:4}).
In Sect.~\ref{sec:3}, we introduce the coordinate algebra $\mathcal{A}(S^{2n+1}_q)$ of the $(2n+1)$-dimensional quantum sphere. We start in Sect.~\ref{sec:alg} by discussing some of its basic algebraic properties. Next, in Sect.~\ref{sec:5}, we discuss two realizations of $\mathcal{A}(S^{2n+1}_q)$, for $q=0$, as the Leavitt path algebra of the two graphs $\Sigma_n$ and $\widetilde{\Sigma}_n$ shown in Figures \ref{fig:sphere} and \ref{fig:sphereB}. We also show that, for all values of $q$, the canonical *-homomorphism from $\mathcal{A}(S^{2n+1}_q)$ to its C*-enveloping algebra $C(S^{2n+1}_q)$ is injective. In Sect.~\ref{sec:6}, we pass to the case of positive parameter and prove the isomorphism $C(S^{2n+1}_q)\cong C(S^{2n+1}_0)$ for all \mbox{$0<q<1$}.

We should mention that the first proof that the isomorphism class of $C(S^{2n+1}_q)$ is independent of $q$ is in \cite{She97bis,She97}. The one proposed here is an independent proof inspired by the ideas of \cite{MK22}. The interplay between the graph and groupoid picture in \cite{She97bis,She97} is discussed in \cite{Dan24a}. Contrary to what happens with C*-enveloping algebras, the polynomial algebra $\mathcal{A}(S^{2n+1}_q)$ does depend on the value of the deformation parameter, see \cite{Dan24b}.

\medskip

\noindent
{\bf Notations and conventions.} In this paper $q$ is a real number and we assume that
\[
0\leq q<1 .
\]
When we write $q\neq 0$, we always mean $0<q<1$.

We denote by $\N$ the set of natural numbers (including $0$) and by $\overline{\N}=\N\cup\{+\infty\}$ its one-point compactification. Note that, for every fixed $q\neq 0$, the map $k\mapsto q^k$ is a closed embedding of $\overline{\N}$ into $\R$, if we adopt the convention $q^{+\infty}=0$. We denote by either $\mathbb{T}$ or $S^1$ the set of unitary complex numbers, and by $\mathbb{T}^n$ the $n$-torus.

Every algebra in this paper is associative and over the complex numbers. In the case of unital algebras, homomorphisms and representations are assumed to be unital. By a Hilbert space we always mean a complex one.

We adopt the convention that an empty sum is $0$ and an empty product is $1$.
By a group action on a (*-)algebra, we will always mean one by (*-)automorphisms.
If $A$ is an algebra and $\alpha:G\to\mathrm{Aut}(A)$ an action of a group $G$, we  denote by $A^G$ the subalgebra of $G$-invariant elements of $A$.
Note that if $A$ is a C*-algebra and $\alpha:G\to\mathrm{Aut}(A)$ is \emph{any} group action (not necessarily continuous), since *-homomorphism between C*-algebras are continuous, $A^G$ is automatically a \mbox{C*-subalgebra} of $A$.
Indeed, if $(a_n)$ is a sequence in $A^G$ and $a_n\to a\in A$, for all $g\in G$ one has
\[
\alpha_g(a)=\alpha_g(\lim_{n\to\infty}a_n)=\lim_{n\to\infty}\alpha_g(a_n)=\lim_{n\to\infty}a_n=a ,
\]
which means that $a\in A^G$.

If $H$ is a Hilbert space, we denote by $\mathcal{B}(H)$ the set of bounded operators and by $\mathcal{U}(H)$ the set of unitary operators on $H$.

\section{Mathematical preliminaries}\label{sec:2}

\subsection{Group actions and conditional expectations}\label{sec:21}
Recall that an action $\alpha$ of a topological group $G$ on a topological space $X$ is called \emph{continuous} if the map $G\times X\to X$, $(g,x)\mapsto\alpha_g(x)$, is continuous. It is called \emph{strongly continuous} if, for all $x\in X$, the map $G\to X$, $g\mapsto\alpha_g(x)$, is continuous. If $X$ is a C*-algebra (resp.~a Hilbert space), by a strongly continuous action (resp.~representation) we will always mean with respect to the norm topology on $X$.

Clearly continuity implies strong continuity, but the converse is in general not true (despite the name, strong continuity is a weaker notion than continuity). It is true in special cases, like the one considered in the next lemma.

\begin{lemma}\label{lemma:cGa}
Let $X$ be a normed vector space and $\alpha$ an isometric action of a topological group $G$ on $X$.
Then $\alpha$ is continuous if and only if it is strongly continuous.
\end{lemma}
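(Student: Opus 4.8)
The plan is to dispatch the two implications separately. One direction---continuity implies strong continuity---is already observed informally in the text just before the statement and needs nothing from the isometry hypothesis: the orbit map $g\mapsto\alpha_g(x)$ is the composition of the continuous inclusion $g\mapsto(g,x)$ of $G$ into $G\times X$ with the jointly continuous action map, hence continuous. So the whole content of the lemma lies in the converse, and the isometry assumption is exactly what makes that converse go through.

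For strong continuity $\Rightarrow$ continuity, I would fix a point $(g_0,x_0)\in G\times X$ and a tolerance $\varepsilon>0$, and seek a neighborhood $U$ of $g_0$ in $G$ together with a $\delta>0$ so that $g\in U$ and $\|x-x_0\|<\delta$ force $\|\alpha_g(x)-\alpha_{g_0}(x_0)\|<\varepsilon$. The first step is the triangle-inequality split
\[
\|\alpha_g(x)-\alpha_{g_0}(x_0)\|\leq \|\alpha_g(x)-\alpha_g(x_0)\|+\|\alpha_g(x_0)-\alpha_{g_0}(x_0)\| ,
\]
which decouples the variation in the space variable from the variation in the group variable.

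The role of the isometry hypothesis is to control the first term uniformly in $g$: since each $\alpha_g$ preserves distances, $\|\alpha_g(x)-\alpha_g(x_0)\|=\|x-x_0\|$, with no dependence on $g$ whatsoever. For the second term I would invoke strong continuity of the \emph{single} orbit map at the fixed point $x_0$, which furnishes a neighborhood $U$ of $g_0$ on which $\|\alpha_g(x_0)-\alpha_{g_0}(x_0)\|<\varepsilon/2$. Taking $\delta=\varepsilon/2$ then makes both contributions smaller than $\varepsilon/2$ simultaneously, and the estimate closes.

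I do not expect a genuine obstacle here; the difficulty, such as it is, is conceptual rather than technical. A priori, continuity at $(g_0,x_0)$ would seem to require some uniform control of the whole family $\{\alpha_g\}$ as the space variable moves, which strong continuity alone---a statement about one orbit map at a time---does not obviously supply. The isometry assumption provides precisely this uniformity for free, since the space-variable term is literally independent of $g$, so that one only ever needs continuity of a single orbit map. Note that linearity of the $\alpha_g$ is not even used: distance-preservation enters directly. The one point to handle with care is to phrase the argument with neighborhoods rather than sequences, so that it remains valid for an arbitrary, possibly non-metrizable, topological group $G$.
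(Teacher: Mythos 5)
Your proof is correct and takes essentially the same approach as the paper: the same triangle-inequality split into a space-variable term (controlled uniformly in $g$ by the isometry hypothesis) and a group-variable term (controlled by strong continuity of the single orbit map at the fixed point). The only cosmetic differences are that the paper argues with nets rather than neighborhoods, and that it uses linearity to write $\alpha_{g_\lambda}(x_\lambda)-\alpha_{g_\lambda}(x)=\alpha_{g_\lambda}(x_\lambda-x)$ before applying norm-preservation, whereas you invoke distance-preservation directly.
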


\begin{proof}
We need to prove the implication $\Leftarrow$. By hypothesis, $\|\alpha_g(x)\|=\|x\|$ for all $(g,x)\in G\times X$.
Let $(g_\lambda,x_\lambda)_{\lambda\in\Lambda}$ be a convergent net in $G\times X$, with limit $(g,x)$. Then
\begin{align*}
\|\alpha_{g_\lambda}(x_\lambda)-\alpha_g(x)\| &=
\|\alpha_{g_\lambda}(x_\lambda-x)+\alpha_{g_\lambda}(x)-\alpha_g(x)\| \\
&\leq
\|\alpha_{g_\lambda}(x_\lambda-x)\|+\|\alpha_{g_\lambda}(x)-\alpha_g(x)\|
=\|x_\lambda-x\|+\|\alpha_{g_\lambda}(x)-\alpha_g(x)\| .
\end{align*}
The first term goes to $0$ because $x_\lambda\to x$, the second because of strong continuity. Thus, one has $\alpha_{g_\lambda}(x_\lambda)\to\alpha_g(x)$, i.e.~$\alpha:G\times X\to X$ is continuous.
\end{proof}

Since every group action on a C*-algebra is isometric (injective *-homomorphisms between C*-algebras are always isometric), as a corollary we have an equivalence between continuity and strong continuity for actions of topological groups on C*-algebras.
Similarly, a unitary representation of a topological group is continuous if and only if it is strongly continuous.

Let us stress that continuity of an action $\alpha:G\times A\to A$ on a C*-algebra doesn't mean that the map $A\to\mathrm{Aut}(A)$, $g\mapsto\alpha_g$, is continuous w.r.t.~the uniform topology on $\mathrm{Aut}(A)$. Similarly, continuity of a unitary representation $U:G\times H\to H$ doesn't mean that the map $G\to\mathcal{U}(H)$, $g\mapsto U_g$, is continuous w.r.t.~the norm-topology on the codomain (the latter is a stronger condition), nor does it mean (a priori) that the adjoint action $\alpha_g(b):=U_gbU_g^*$ on $\mathcal{B}(H)$ is strongly continuous.

\medskip

A unitary representation $U$ of a topological group $G$ on a Hilbert space $H$ is called \emph{weakly continuous} if, for all $v,w\in H$, the map $G\to\C$ given by $g\mapsto\inner{v,U_gw}$ is continuous. Thus, $U$ is weakly (resp.~strongly) continuous if as a map $G\to\mathcal{U}(H)$ is continuous w.r.t.~the weak (resp.~strong) operator topology on $\mathcal{U}(H)$.

\begin{lemma}\label{lemma:sc}
For a unitary representation $U$ of a topological group $G$ on a Hilbert space $H$, the following conditions are equivalent
\begin{enumerate}[label=(\roman*)]
\item\label{en:lemA} $U$ is strongly continuous,
\item\label{en:lemB} $U$ is weakly continuous.
\end{enumerate}
\end{lemma}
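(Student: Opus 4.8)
The plan is to dispatch \ref{en:lemA}$\Rightarrow$\ref{en:lemB} immediately and to spend the real effort on the converse. For the forward direction, if $U$ is strongly continuous then for each fixed $w\in H$ the map $g\mapsto U_gw$ is norm-continuous; composing it with the bounded (hence continuous) linear functional $\inner{v,\cdot}$ shows that $g\mapsto\inner{v,U_gw}$ is continuous for every $v$, which is precisely weak continuity.

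For \ref{en:lemB}$\Rightarrow$\ref{en:lemA}, the key observation is that unitarity promotes convergence of inner products to convergence of norms. Fix $w\in H$ and let $(g_\lambda)$ be a net in $G$ converging to $g$. Expanding the squared norm of the difference and using that $U$ is isometric, so that $\|U_{g_\lambda}w\|=\|U_gw\|=\|w\|$, I would obtain
\[
\|U_{g_\lambda}w-U_gw\|^2
=2\|w\|^2-2\,\mathrm{Re}\,\inner{U_gw,U_{g_\lambda}w}.
\]
Applying the weak-continuity hypothesis with the fixed vector $v=U_gw$ gives $\inner{U_gw,U_{g_\lambda}w}\to\inner{U_gw,U_gw}=\|w\|^2$, so the right-hand side tends to $0$. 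Hence $U_{g_\lambda}w\to U_gw$ in norm for every $w$, which is exactly strong continuity.

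There is no genuine obstacle here; the argument is short once one isolates the right identity. The one point worth flagging is that, since $G$ is merely a topological group and need not be first countable, one must phrase the continuity argument in terms of nets rather than sequences. The decisive ingredient is the isometry property of a unitary representation: it is precisely what forces the weak and strong operator topologies to agree along the orbit $g\mapsto U_gw$, even though these two topologies differ in general on $\mathcal{U}(H)$.
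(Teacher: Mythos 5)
Your proof is correct and follows exactly the paper's argument: the forward direction by composing with the continuous functional $\inner{v,\cdot\,}$, and the converse by expanding $\|U_{g_\lambda}w-U_gw\|^2=2\|w\|^2-2\,\Re\inner{U_gw,U_{g_\lambda}w}$ and applying weak continuity with $v=U_gw$. No differences worth noting.
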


\begin{proof}
For $v,w\in H$ and $b\in\mathcal{B}(H)$ we have two maps
\begin{align*}
f_w  & : G\longrightarrow H, && g\longmapsto U_gw , \\
f_{v,w} & : G\longrightarrow\C, && g\longmapsto\inner{v,U_gw} .
\end{align*}
\ref{en:lemA} means that all maps $f_w$ are continuous,
\ref{en:lemB} that all maps $f_{v,w}$ are continuous.
The implication \ref{en:lemA}$\,\Rightarrow\,$\ref{en:lemB}
is obvious, since $f_{v,w}$ is the composition of $f_w$ with the (continuous) map $H\to\C$ given by the scalar product with $v$.
We must prove \ref{en:lemB}$\,\Rightarrow\,$\ref{en:lemA}.

Let $(g_\lambda)_{\lambda\in\Lambda}$ be a net in $G$ convergent to $g\in G$.
If $U$ is weakly continuous, for every $w\in H$ and writing $v=U_gw$, we have
\[
\inner{U_gw,U_{g_\lambda}w}=f_{v,w}(g_\lambda) \longrightarrow f_{v,w}(g)=\|U_gw\|^2=\|w\|^2 ,
\]
and then
\[
\|f_w(g_\lambda)-f_w(g)\|^2=\|U_{g_\lambda}w-U_gw\|^2=2\|w\|^2-2\,\Re\inner{U_gw,U_{g_\lambda}w}\longrightarrow 0. \qedhere
\]
\end{proof}

Recall that a linear map $f:A\to B$ between C*-algebras is called \emph{faithful} if, for all positive elements $a\in A$, $f(a)=0$ implies $a=0$. A *-homomorphism is faithful if and only if it is injective, for if $f$ is faithful, $a\in A$ and $f(a)=0$,
then $f(a^*a)=f(a)^*f(a)=0$, which implies $a^*a=0$ by faithfulness, and then $a=0$ by the C*-identity.
In particular,

\begin{rem}
A bounded *-representation of a C*-algebra is faithful if and only if it is injective.
\end{rem}

A \emph{C*-dynamical system} is a triple $(A,G,\alpha)$, where $A$ is a C*-algebra, $G$ a locally compact Hausdorff group, and $\alpha$ a strongly continuous action of $G$ on $A$.
A \emph{covariant representation} of a C*-dynamical system $(A,G,\alpha)$ is a triple $(H,\pi,U)$ consisting of a Hilbert space $H$, a bounded *-representation $\pi$ of $A$ on $H$, and a strongly continuous unitary representation $U$ of $G$ on $H$, such that
$U_g\pi(a)U_g^*=\pi(\alpha_ga)$ for all $a\in A$ and $g\in G$.

If $A$ is a C*-algebra and $B$ a C*-subalgebra, a linear map $\vartheta:A\to B$ is called a \emph{conditional expectation} if it is $B$-bilinear, idempotent ($\vartheta(b)=b\;\forall\;b\in B$), completely positive and a contraction. The important property for us will be positivity.
By Tomiyama's theorem (see e.g.~\cite[Thm.~1.5.10]{BO08}), every contractive idempotent map $A\to B$ (with $B\subseteq A$ a C*-subalgebra) is a conditional expectation. Notice that, since $\vartheta$ is idempotent, it is also surjective.

\begin{lemma}\label{lemma:faithcond}
Let $(A,G,\alpha)$ be a C*-dynamical system, with $G$ compact, and let $\mu$ be the normalized Haar measure on $G$. Then,
\begin{enumerate}
\item\label{en:faithcondA}
The map $\vartheta:A\to A^G$ given by
\begin{equation}\label{eq:conditional}
\vartheta(a):=\int_G\alpha_g(a)d\mu_g ,\qquad a\in A,
\end{equation}
is a faithful conditional expectation.

\item\label{en:faithcondB}
Let $(H,\pi,U)$ be a covariant representation of $(A,G,\alpha)$. If the restriction $\pi|_{A^G}:A^G\to\mathcal{B}(H)$ is faithful, then $\pi$ is faithful.
\end{enumerate}
\end{lemma}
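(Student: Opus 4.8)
The plan is to treat the two parts in sequence, with part \ref{en:faithcondB} built on part \ref{en:faithcondA}. For part \ref{en:faithcondA}, the first task is to make sense of the integral \eqref{eq:conditional}. Since $G$ is compact and $\alpha$ is strongly continuous, for each fixed $a$ the orbit map $g\mapsto\alpha_g(a)$ is a norm-continuous function from a compact space into the Banach space $A$, hence Bochner integrable; so $\vartheta(a)$ is well defined. I would then record the one fact I use repeatedly: a bounded linear map commutes with the Bochner integral. Applying this to the isometry $\alpha_h$ and using left-invariance of $\mu$ gives $\alpha_h(\vartheta(a))=\int_G\alpha_{hg}(a)\,d\mu_g=\vartheta(a)$, so $\vartheta$ indeed lands in the C*-subalgebra $A^G$. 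Contractivity follows from $\|\vartheta(a)\|\leq\int_G\|\alpha_g(a)\|\,d\mu_g=\|a\|$ (using that each $\alpha_g$ is isometric and $\mu$ is normalized), and idempotence is immediate because $\alpha_g(b)=b$ for $b\in A^G$. With $\vartheta$ contractive and idempotent onto a C*-subalgebra, Tomiyama's theorem (cited above) upgrades it to a conditional expectation for free.

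For faithfulness of $\vartheta$, I would argue through states. Suppose $a\geq 0$ and $\vartheta(a)=0$, and let $\phi$ be any state on $A$. Then $0=\phi(\vartheta(a))=\int_G\phi(\alpha_g(a))\,d\mu_g$, where the integrand $g\mapsto\phi(\alpha_g(a))$ is continuous (the composition of the continuous orbit map with the bounded functional $\phi$) and pointwise non-negative, since each $\alpha_g(a)$ is positive. Because the Haar measure of a compact group has full support, a non-negative continuous function with vanishing integral is identically zero; evaluating at $g=e$ yields $\phi(a)=0$. As $\phi$ was arbitrary, $a=0$.

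For part \ref{en:faithcondB}, the key observation is that $\pi$, being a contractive linear map, also commutes with the integral, so $\pi(\vartheta(a))=\int_G\pi(\alpha_g(a))\,d\mu_g$, and the covariance relation rewrites the integrand as $U_g\pi(a)U_g^*$. Now if $a\geq 0$ and $\pi(a)=0$, then $\pi(\alpha_g(a))=U_g\pi(a)U_g^*=0$ for every $g$, whence $\pi(\vartheta(a))=0$. Since $\vartheta(a)$ is a positive element of $A^G$ and $\pi|_{A^G}$ is faithful by hypothesis, we get $\vartheta(a)=0$; faithfulness of $\vartheta$ from part \ref{en:faithcondA} then forces $a=0$. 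This is exactly the statement that $\pi$ is faithful.

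I do not expect a serious obstacle, but the points needing care are analytic rather than algebraic: justifying that the $A$-valued and $\mathcal{B}(H)$-valued integrands are genuinely norm-continuous (this is where strong continuity of $\alpha$, rather than mere measurability, is essential) and that bounded maps pass through the integral. Conceptually, the crux is the two-step faithfulness chain in part \ref{en:faithcondB}---first descending from $\pi$ to $\pi|_{A^G}$ via the averaging trick, then from $A^G$ back to $A$ via faithfulness of $\vartheta$---which is what makes the fixed-point subalgebra a sufficient place to test injectivity.
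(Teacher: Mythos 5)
Your proposal is correct and follows essentially the same route as the paper: Bochner averaging over the Haar measure, Tomiyama's theorem to upgrade the contractive idempotent to a conditional expectation, and in part (ii) the same two-step chain $\pi(a)=0\Rightarrow\pi(\vartheta(a))=0\Rightarrow\vartheta(a)=0\Rightarrow a=0$ via covariance and faithfulness of $\pi|_{A^G}$ and of $\vartheta$. The only cosmetic difference is in the faithfulness of $\vartheta$: you argue contrapositively with an arbitrary state and the full support of the Haar measure, while the paper argues directly with a norm-attaining state $\omega(a)=\|a\|$ and a non-empty open set of positive Haar measure --- the same underlying fact packaged differently.
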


\begin{proof}
\ref{en:faithcondA}
Let $a\in A$.
By invariance of the Haar measure, for all $g'\in G$,
\[
\alpha_{g'}\vartheta(a)=\int_G\alpha_{g'g}(a)d\mu_g=\int_G\alpha_{g''}(a)d\mu_{g''}=\vartheta(a) ,
\]
where $g''=g'g$. Thus, $\vartheta(a)$ is $G$-invariant, i.e.~it belongs to $A^G$. By the choice of normalization, $\vartheta$ is also idempotent: for every $a\in A^G$, $\vartheta(a)=\int_G\alpha_g(a)d\mu_g=\int_Gad\mu_g=a\int_Gd\mu_g=a$.
Finally, since
\[
\|\vartheta(a)\|\leq \int_G \|\alpha_g(a)\|d\mu_g=\|a\| ,
\]
the map $\vartheta$ is norm decreasing. By Tomiyama's theorem, this implies that $\vartheta$ is a conditional expectation. To prove faithfulness, let now $a\in A$ be a non-zero positive element and let $\omega$ be a state on $A$
such that $\omega(a)=\|a\|$. Since $g\mapsto\omega(\alpha_g(a))$ is continuous, there exists a non-empty open set $V\subseteq G$ such that $\omega(\alpha_g(a))\geq\|a\|/2$ for all $g\in V$. But
\[
\omega(\vartheta(a))=\int_G\omega(\alpha_g(a))d\mu_g\geq
\int_V\omega(\alpha_g(a))d\mu_g\geq \mu(V)\cdot\|a\|/2 .
\]
Since the Haar measure $\mu(V)$ of a non-empty open set is non-zero, one has $\omega(\vartheta(a))\neq 0$, and then $\vartheta(a)\neq 0$.

\medskip

\noindent\ref{en:faithcondB}
By point \ref{en:faithcondA} of this lemma, the map $\vartheta$ in \eqref{eq:conditional} is a faithful and positive.
Let $\widetilde{\vartheta}:\mathcal{B}(H)\to \mathcal{B}(H)$ be the linear map defined by
\[
\widetilde{\vartheta}(b):=\int_{G}U_gbU_g^*d\mu_g
\]
for all $b\in\mathcal{B}(H)$. By covariance of the representation,
\begin{equation}\label{eq:faithpt}
\pi\circ\vartheta=\widetilde\vartheta\circ\pi .
\end{equation}
Let $a\in A$ be a positive element in the kernel of $\pi$. From \eqref{eq:faithpt} we get
\[
\pi\big(\vartheta(a)\big)=\widetilde\vartheta\big(\pi(a)\big)=\widetilde\vartheta(0)=0 .
\]
Since $\vartheta(a)$ is a positive element in $A^G$ and $\pi|_{A^G}$ is faithful, we deduce that $\vartheta(a)=0$. Since $\vartheta$ is faithful, it follows that $a=0$. Hence $\pi$ is faithful.
\end{proof}

\subsection{Universal C*-algebras}\label{sec:22}

In this section we restrict our attention to unital algebras and unital homomorphisms (it is important to specify this, since universal constructions depend on the ambient category). Thus, every algebra in this section is assumed to be (complex, associative and) \emph{unital}, even if not stated explicitly.

\begin{df}\label{df:universal}
Let $A$ be a *-algebra, $B$ a C*-algebra and $\phi:A\to B$ a *-homomorphism. The pair $(B,\phi)$ is called a \emph{universal C*-algebra} of $A$ if for every C*-algebra $C$ and every *-homomorphism $f:A\to C$ there exists a unique *-homomorphism $\widetilde{f}:B\to C$ making the following diagram commute:
\[
\begin{tikzpicture}[>=To,xscale=2.5,yscale=2]

\node (a) at (0,1) {$A$};
\node (b) at (1,1) {$B$};
\node (c) at (1,0) {$C$};

\draw[font=\footnotesize,->]
	(a) edge node[below left,pos=0.45] {$f$} (c)
	(a) edge node[above] {$\phi$} (b)
	(b) edge[dashed] node[right,pos=0.45] {$\widetilde{f}$} (c);

\end{tikzpicture}
\]
\end{df}

In categorical language, denoting by $F$ the forgetful functor from C*-algebras to *-algebras,
the pair $(B,\phi)$ in the above definition is an initial object in the comma category $(A\downarrow F)$, and is
what we call a \emph{universal morphism} from $A$ to $F$, cf.~\cite[Cap.~3]{Mac10}.

One can see \cite{Bla85} for a study of universal C*-algebras described by generators and (admissible) relations.
As usual with universal morphisms, a universal C*-algebra of $A$ may not exist (see the next non-example), but if it exists it is essentially unique (see the next Prop.~\ref{prop:unique}).

\begin{nex}
The algebra $A:=\C[x]$, with *-structure $x^*=x$, has no universal C*-algebra. By contradiction, suppose that $(B,\phi)$ satisfies the universal property in Def.~\ref{df:universal}.
For every $\lambda\in\R$ one has a *-homomorphism $\C[x]\to\C$ that maps $x$ to $\lambda$. By the universal property, there must exist a *-homomorphism $B\to\C$ that maps $\phi(x)$ to $\lambda$. Since *-homomorphisms between C*-algebras are norm decreasing, we deduce that $\|\phi(x)\|\geq\lambda$ for all $\lambda\in\R$, which is clearly impossible.
\end{nex}

\begin{prop}\label{prop:unique}
Let $(B,\phi)$ and $(B',\phi')$ be universal C*-algebras of the same *-algebra $A$. Then, there exists a unique isomorphism $f:B\to B'$ such that $\phi=\phi'\circ f$.
\end{prop}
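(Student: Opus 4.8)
The plan is to exploit that, in the language of Def.~\ref{df:universal}, a universal C*-algebra of $A$ is precisely an initial object of the comma category $(A\downarrow F)$, and to run the standard argument showing that any two initial objects are uniquely isomorphic. Concretely, I would manufacture a mutually inverse pair of $*$-homomorphisms between $B$ and $B'$ using nothing but the universal properties, and then read off both the existence and the uniqueness of the desired isomorphism. No analytic input is needed: the C*-structure enters only to guarantee that the relevant $*$-homomorphisms exist and compose.

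First I would apply the universal property of $(B,\phi)$ to the C*-algebra $C:=B'$ and the $*$-homomorphism $\phi':A\to B'$. This yields a unique $*$-homomorphism $f:B\to B'$ with $f\circ\phi=\phi'$. Symmetrically, applying the universal property of $(B',\phi')$ to $C:=B$ and $\phi:A\to B$ produces a unique $*$-homomorphism $g:B'\to B$ with $g\circ\phi'=\phi$. These two maps are the candidate isomorphism and its inverse, and $f$ is the map intertwining the two universal morphisms asserted in the statement.

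Next I would check that $g\circ f=\id_B$ and $f\circ g=\id_{B'}$. For the first, observe that $(g\circ f)\circ\phi=g\circ(f\circ\phi)=g\circ\phi'=\phi$, so $g\circ f$ is a $*$-endomorphism of $B$ that completes the commuting triangle of Def.~\ref{df:universal} for the data $(C,f)=(B,\phi)$; but $\id_B$ completes the same triangle, since $\id_B\circ\phi=\phi$. The one point that deserves care is that I invoke the \emph{uniqueness} clause of the universal property of $(B,\phi)$, now applied with $B$ itself in the role of $C$ and $\phi$ in the role of $f$, to conclude that these two fill-ins coincide, i.e.\ $g\circ f=\id_B$. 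The identity $f\circ g=\id_{B'}$ follows by the same reasoning with the roles of $B$ and $B'$ interchanged, so $f$ is an isomorphism with inverse $g$.

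Finally, uniqueness of $f$ is immediate from the uniqueness already built into Def.~\ref{df:universal}: any isomorphism $h:B\to B'$ with $h\circ\phi=\phi'$ is a fill-in for the pair $(B',\phi')$ relative to $(B,\phi)$, and there is exactly one such $*$-homomorphism, whence $h=f$. The statement is therefore pure category theory, and the only real obstacle is the bookkeeping of which universal property is being applied at each step—in particular, remembering to test the composites $g\circ f$ and $f\circ g$ against the relevant identity map via the uniqueness clause, rather than attempting to invert the maps directly.
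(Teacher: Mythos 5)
Your proof is correct and is essentially identical to the paper's: both construct the two fill-in maps $f:B\to B'$ and $g:B'\to B$ from the respective universal properties, then use the uniqueness clause (testing $g\circ f$ and $f\circ g$ against the identity maps) to conclude they are mutually inverse, with uniqueness of $f$ built into the universal property. This is the standard argument that initial objects are unique up to unique isomorphism, which is exactly how the paper frames it.
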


\begin{proof}
This is a standard proof which is valid for arbitrary universal morphisms. We repeat it here for the reader's ease.
By the universal property, if an isomorphism $f$ as above exists, it is unique. We must show its existence.

By the universal property of both $(B,\phi)$ and $(B',\phi')$, there exists a unique map $\widetilde{\phi}$ and a unique map $\widetilde{\phi}'$ making the following diagram commute
\[
\begin{tikzpicture}[>=To]

\node (a) at (1.5,1.8) {$A$};
\node (b) at (0,0) {$B$};
\node (c) at (3,0) {$B'$};

\draw[font=\footnotesize,->]
	(a) edge node[above right] {$\phi'$} (c)
	(a) edge node[above left] {$\phi$} (b)
	(b) edge[transform canvas={yshift=2pt}] node[above] {$\;\widetilde{\phi}'$} (c)
	(c) edge[transform canvas={yshift=-2pt}] node[below] {$\;\widetilde{\phi}\phantom{'}$} (b);

\end{tikzpicture}
\]
Commutativity of the diagram tells us that $\widetilde{\phi}\circ (\widetilde{\phi}'\circ\phi)=\widetilde{\phi}\circ\phi'=\phi$, thus we have a commutative diagram
\[
\begin{tikzpicture}[>=To]

\node (a) at (1.5,1.8) {$A$};
\node (b) at (0,0) {$B$};
\node (c) at (3,0) {$B$};

\draw[font=\footnotesize,->]
	(a) edge node[above right] {$\phi$} (c)
	(a) edge node[above left] {$\phi$} (b)
	(b) edge[transform canvas={yshift=2pt}] node[above] {$\widetilde{\phi}\circ\widetilde{\phi}'$} (c)
	(b) edge[transform canvas={yshift=-2pt}] node[below] {$\id_B$} (c);

\end{tikzpicture}
\]
From the universal property we deduce that $\widetilde{\phi}\circ\widetilde{\phi}'=\id_B$. Similarly one shows that $\widetilde{\phi}'\circ\widetilde{\phi}=\id_{B'}$, so that $f=\widetilde{\phi}'$ is the desired isomorphism.
\end{proof}

Due to its essential uniqueness, from now on we will talk about \emph{the} universal C*-algebra of $A$ (if any) and denote it by $(A_u,\phi_u)$.

Observe that every *-algebra $A$ has (at least) one bounded *-representation given by the zero representation.
The next definition \eqref{eq:Csemi} then makes sense.

\begin{prop}
Let $A$ be a *-algebra and, for $a\in A$, call
\begin{equation}\label{eq:Csemi}
\|a\|_u:=\sup\big\{\|\rho(a)\|:\rho\text{ is a bounded *-representation of }A\big\}.
\end{equation}
The universal C*-algebra of $A$ exists if and only if $\|a\|_u$ is finite for all $a\in A$.
\end{prop}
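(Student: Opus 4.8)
The plan is to prove the two implications separately. The forward implication is immediate from the universal property, while the reverse one requires an explicit construction of $A_u$ as a completion, so that is where the real content lies.

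For the implication $\Rightarrow$, suppose $(A_u,\phi_u)$ exists. Given any bounded *-representation $\rho$ of $A$ on a Hilbert space $H$, the algebra $\mathcal{B}(H)$ is a C*-algebra, so the universal property yields a *-homomorphism $\widetilde\rho:A_u\to\mathcal{B}(H)$ with $\rho=\widetilde\rho\circ\phi_u$. Since *-homomorphisms between C*-algebras are norm decreasing, $\|\rho(a)\|=\|\widetilde\rho(\phi_u(a))\|\leq\|\phi_u(a)\|$ for every $a\in A$. Taking the supremum over all $\rho$ gives $\|a\|_u\leq\|\phi_u(a)\|<\infty$, as required.

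For the implication $\Leftarrow$, assume $\|a\|_u<\infty$ for all $a$. The first step is to check that $\|\cdot\|_u$ is a C*-seminorm on $A$. Sublinearity, submultiplicativity and *-invariance all follow by taking suprema of the corresponding inequalities satisfied by each $\|\rho(\cdot)\|$; the C*-identity $\|a^*a\|_u=\|a\|_u^2$ follows from $\|\rho(a^*a)\|=\|\rho(a)\|^2$ together with the fact that $t\mapsto t^2$ is monotone on $[0,\infty)$, so that the supremum of the squares equals the square of the supremum. The second step is the quotient-and-complete construction: set $N:=\{a\in A:\|a\|_u=0\}$, which is a two-sided *-ideal because $\|\cdot\|_u$ is a C*-seminorm, so that $\|\cdot\|_u$ descends to a genuine C*-norm on $A/N$. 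Let $A_u$ be the completion of $A/N$ in this norm---a C*-algebra---and let $\phi_u:A\to A/N\hookrightarrow A_u$ be the canonical (unital) *-homomorphism, whose image is dense. It then remains to verify the universal property: given a C*-algebra $C$ and a *-homomorphism $f:A\to C$, I would fix an isometric *-representation $\iota:C\to\mathcal{B}(H)$, so that $\iota\circ f$ is a bounded *-representation of $A$ and hence $\|f(a)\|=\|\iota(f(a))\|\leq\|a\|_u$ for all $a$. In particular $N\subseteq\ker f$, so $f$ descends to a *-homomorphism $A/N\to C$ that is contractive for the $\|\cdot\|_u$-norm, and therefore extends uniquely by continuity to a *-homomorphism $\widetilde f:A_u\to C$ with $\widetilde f\circ\phi_u=f$. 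Uniqueness is automatic, since any two *-homomorphisms out of $A_u$ that agree on the dense subset $\phi_u(A)$ coincide; unitality is preserved at every stage.

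The main obstacle---more a point requiring care than a genuine difficulty---is recognizing that the definition of $\|\cdot\|_u$ as a supremum over \emph{representations} already encodes the factorization: the crucial inequality $\|f(a)\|\leq\|a\|_u$ for an arbitrary *-homomorphism $f$ into a C*-algebra only becomes available after embedding the codomain $C$ into some $\mathcal{B}(H)$, which is exactly what turns $f$ into a representation controlled by the supremum defining $\|\cdot\|_u$. Verifying the C*-identity for the seminorm is the only other step where one argues beyond purely formal manipulation.
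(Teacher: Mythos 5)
Your proof is correct and follows essentially the same route as the paper: the forward implication via the universal property plus contractivity of *-homomorphisms between C*-algebras, and the reverse implication via the quotient-and-complete construction of the enveloping algebra, with the universal property verified by embedding the target $C$ into some $\mathcal{B}(H)$ (Gelfand--Naimark) so that $f$ becomes a bounded *-representation controlled by $\|\cdot\|_u$. The only difference is that you spell out the verification that $\|\cdot\|_u$ is a C*-seminorm, which the paper asserts without proof; this is a welcome addition, not a change of method.
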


\begin{proof}
($\Rightarrow$) Let $(A_u,\phi_u)$ be the universal C*-algebra of $A$ (by hypothesis, it exists).
By the universal property, for every bounded *-representation $\rho:A\to\mathcal{B}(H)$ we have a commutative diagram
\[
\begin{tikzpicture}[>=To,xscale=2.5,yscale=2]

\node (a) at (0,1) {$A$};
\node (b) at (1,1) {$A_u$};
\node (c) at (1,0) {$\mathcal{B}(H)$};

\draw[font=\footnotesize,->]
	(a) edge node[below left,pos=0.45] {$\rho$} (c)
	(a) edge node[above,pos=0.55] {$\phi_u$} (b)
	(b) edge[dashed] node[right,pos=0.45] {$\pi$} (c);

\end{tikzpicture}
\]
where $\pi$ is a bounded *-representation of $A_u$. Since *-homomorphisms between C*-algebras are contractive, for all $a\in A$ one has
\[
\|\rho(a)\|=\|\pi(\phi_u(a))\|\leq \|\phi_u(a)\| ,
\]
where the first two norms are the operator norms in $\mathcal{B}(H)$ and the third one is the C*-norm of $A_u$.
The latter is independent of the representation, hence $\|a\|_u\leq\|\phi_u(a)\|<\infty\;\forall\;a\in A$.

\medskip

\noindent
($\Leftarrow$)
The proof is constructive. If $\|a\|_u$ is finite for all $a\in A$, then the map $a\mapsto\|a\|_u$ is a C*-seminorm, called the \emph{universal C*-seminorm} of $A$.
Putting $J:=\big\{a\in A:\|a\|_u=0\big\}$ the kernel of the seminorm (a closed two-sided *-ideal in $A$), the C*-completion $A_u$ of $A/J$ in the norm induced by $\|\,\cdot\,\|_u$ is well-defined and equipped with a canonical *-homomorphism $\phi_u:A\to A_u$.
This pair $(A_u,\phi_u)$ is called the \emph{C*-enveloping algebra} of $A$.

We now show that it satisfies the universal property. Let $f:A\to C$ be a *-homomorphism to a C*-algebra. Think of $C$ as a concrete C*-algebra of bounded operators on a Hilbert space (by Gelfand theorem) and of $f$ as a bounded *-representation. By definition of the ideal $J$ above, its elements are in the kernel of any bounded *-representation, thus $J\subseteq\ker(f)$. It follows that the map $f:A\to C$ factors through the canonical projection $\pi:A\to A/J$ and we have a commutative diagram
\[
\begin{tikzpicture}[>=To]

\node (a) at (0,0) {$A$};
\node (aj) at (2,0) {$A/J$};
\node (au) at (4,0) {$A_u$};
\node (c) at (2,-2) {$C$};

\draw[font=\footnotesize]
	(a) edge[->>] node[above] {$\pi$} (aj)
	(a) edge[->] node[below left,pos=0.45] {$f$} (c)
	(aj) edge[right hook->] (au)
	(aj) edge[->] node[right,pos=0.45] {$f_0$} (c)
	(a) edge[bend left,->] node[above] {$\phi_u$} (au)
	(au) edge[dashed,->] node[below right,pos=0.45] {$\widetilde{f}$} (c);

\end{tikzpicture}
\]
The map $f_0$ is uniquely determined by $f$, since $\pi$ is surjective.
By definition of the universal C*-seminorm one has $\|f_0(\pi(a))\|=\|f(a)\|\leq\|a\|_u$, thus $f_0$ is Lipschitz continuous, and hence uniformly continuous. A uniformly continuous function $D\to Y$ from a dense subset $D$ of a complete metric space $X$ to a metric space $Y$ admits a unique continuous extension $X\to Y$. There exists then a continuous map $\widetilde{f}:A_u\to C$
extending $f_0$. Since $f_0$ is a *-homomorphism and $A/J$ is dense in $A_u$, $\widetilde{f}$ is a *-homomorphism.
Any other *-homomorphism $A_u\to C$ making the above diagram commute must coincide with $\widetilde{f}$, since it is equal to $\widetilde{f}$ on a dense subset of the domain.
\end{proof}

Notice that the map $\phi_u$ is not always injective.
If it is injective, we will say that $A$ \emph{embeds} into its universal C*-algebra, think of $A$ as a dense *-subalgebra of $A_u$, and omit the map $\phi_u$.

\begin{prop}
Assume that the universal C*-algebra of $A$ exists. Then, $\phi_u$ is injective if and only if
$A$ has an injective bounded *-representation.
\end{prop}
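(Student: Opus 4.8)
The plan is to reduce everything to understanding the kernel of $\phi_u$. Recall from the construction of the C*-enveloping algebra that $\phi_u$ factors as the quotient map $A\to A/J$ followed by the (injective) inclusion of $A/J$ into its completion $A_u$, where $J=\{a\in A:\|a\|_u=0\}$. Hence $\ker\phi_u=J$, and $\phi_u$ is injective precisely when $J=\{0\}$. Moreover, since $\|a\|_u$ is the supremum of $\|\rho(a)\|$ over all bounded *-representations $\rho$ of $A$, an element $a$ lies in $J$ if and only if $\rho(a)=0$ for every such $\rho$; that is, $J$ is exactly the intersection of the kernels of all bounded *-representations of $A$.

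For the implication ($\Leftarrow$), I would suppose that $A$ admits an injective bounded *-representation $\rho$. If $a\in J$, then in particular $\rho(a)=0$, and injectivity of $\rho$ forces $a=0$. Thus $J=\{0\}$, and by the preceding paragraph $\phi_u$ is injective.

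For the implication ($\Rightarrow$), I would suppose that $\phi_u$ is injective. Since $A_u$ is a C*-algebra, the Gelfand--Naimark theorem provides a faithful (equivalently, injective) *-representation $\pi:A_u\to\mathcal{B}(H)$ on some Hilbert space $H$. The composition $\pi\circ\phi_u:A\to\mathcal{B}(H)$ is then a bounded *-representation of $A$, and it is injective as a composition of two injective maps. This produces the desired injective bounded *-representation.

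I expect no serious obstacle here: the argument is essentially bookkeeping once $\ker\phi_u$ is correctly identified with the intersection of the kernels of all bounded *-representations of $A$. The only external input is the Gelfand--Naimark theorem, invoked solely to guarantee that the C*-algebra $A_u$ possesses a faithful representation; everything else follows directly from the explicit construction of $(A_u,\phi_u)$ recalled above.
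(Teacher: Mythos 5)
Your proof is correct and follows essentially the same route as the paper: the forward direction composes a Gelfand--Naimark faithful representation of $A_u$ with $\phi_u$, and the reverse direction identifies $\ker\phi_u$ with the kernel $J$ of the universal C*-seminorm, which an injective bounded *-representation forces to be zero. Your explicit identification of $J$ as the intersection of the kernels of all bounded *-representations is just a slightly more spelled-out version of the paper's remark that the universal C*-seminorm is then a norm.
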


\begin{proof}
($\Rightarrow$) Every C*-algebra has a faithful representation (Gelfand-Naimark theorem). Let $\pi$ be a faithful representation of $A_u$. Then $\pi\circ\phi_u$ is an injective bounded *-representation of $A$.

\medskip

\noindent
($\Leftarrow$)
If $A$ has an injective bounded *-representation, then the universal C*-seminorm is a norm, $J=0$ and the map from $A\cong A/J$ to its C*-enveloping algebra is injective.
\end{proof}

\begin{ex}
Let $n\geq 2$ and $A:=\C[x]/(x^n)$ be the truncated polynomial algebra in one real indeterminate, $x=x^*$. Let $f:A\to C$ be a *-homomorphism to another C*-algebra. Then $f(x)^n=f(x^n)=0$ in $C$, which by the C*-identity implies $f(x)=0$. Thus, $f$ factors through the morphism $A\to \C$ defined by $x\mapsto 0$, proving that the universal C*-algebra of $A$ is $A_u=\C$. The canonical *-homomorphism $\phi_u:A\to\C$, $x\mapsto 0$, is obviously not injective.
\end{ex}

\begin{lemma}\label{lemma:autA}
Let $A$ be a *-algebra and $(A_u,\phi_u)$ its universal C*-algebra (assume that it exists). For every $\alpha\in\mathrm{Aut}(A)$ there exists a unique $\widetilde\alpha\in\mathrm{Aut}(A_u)$ such that the following diagram commutes
\[
\begin{tikzpicture}[>=To]

\node (a) at (0,1.8) {$A$};
\node (d) at (2.5,1.8) {$A$};
\node (b) at (0,0) {$A_u$};
\node (c) at (2.5,0) {$A_u$};

\draw[font=\footnotesize,->]
	(a) edge node[above] {$\alpha$} (d)
	(a) edge node[left] {$\phi_u$} (b)
	(d) edge node[right] {$\phi_u$} (c)
	(b) edge node[above] {$\widetilde{\alpha}$} (c);

\end{tikzpicture}
\]
\end{lemma}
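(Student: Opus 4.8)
The plan is to obtain $\widetilde{\alpha}$ directly from the universal property applied to a suitable *-homomorphism, and then to establish that it is invertible by applying the same construction to $\alpha^{-1}$ and invoking uniqueness.

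First I would consider the *-homomorphism $f:=\phi_u\circ\alpha:A\to A_u$. Since $A_u$ is a C*-algebra, Definition \ref{df:universal} (the universal property of $(A_u,\phi_u)$) yields a unique *-homomorphism $\widetilde{\alpha}:A_u\to A_u$ with $\widetilde{\alpha}\circ\phi_u=\phi_u\circ\alpha$. This already gives both the existence and the uniqueness of a *-endomorphism of $A_u$ making the square commute; it remains only to check that $\widetilde{\alpha}$ is in fact an automorphism, which is what the notation $\widetilde{\alpha}\in\mathrm{Aut}(A_u)$ requires.

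To this end I would apply the same construction to $\alpha^{-1}\in\mathrm{Aut}(A)$, obtaining a *-homomorphism $\beta:A_u\to A_u$ with $\beta\circ\phi_u=\phi_u\circ\alpha^{-1}$. Then $(\widetilde{\alpha}\circ\beta)\circ\phi_u=\widetilde{\alpha}\circ\phi_u\circ\alpha^{-1}=\phi_u\circ\alpha\circ\alpha^{-1}=\phi_u$, and symmetrically $(\beta\circ\widetilde{\alpha})\circ\phi_u=\phi_u$. On the other hand $\id_{A_u}\circ\phi_u=\phi_u$. Invoking the uniqueness clause of the universal property with $C=A_u$ and $f=\phi_u$, the endomorphism of $A_u$ restricting to $\phi_u$ along $\phi_u$ is unique, so both composites $\widetilde{\alpha}\circ\beta$ and $\beta\circ\widetilde{\alpha}$ must equal $\id_{A_u}$. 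Hence $\widetilde{\alpha}$ is invertible with inverse $\beta$, i.e.~$\widetilde{\alpha}\in\mathrm{Aut}(A_u)$.

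This argument is essentially the functoriality of the C*-enveloping construction, and is formally the same as the uniqueness proof in Proposition \ref{prop:unique}: the only inputs are the universal property and the existence of $\alpha^{-1}$. I do not expect any genuine obstacle here. The one point worth keeping in mind is that the uniqueness half of Definition \ref{df:universal} is used twice — once to produce and pin down $\widetilde{\alpha}$, and once (applied to the trivial data $f=\phi_u$) to collapse the two composites to the identity — and that it is precisely the invertibility of $\alpha$, rather than mere endomorphism status, that upgrades $\widetilde{\alpha}$ from an endomorphism to an automorphism.
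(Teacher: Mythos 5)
Your proof is correct and follows essentially the same route as the paper: the paper also obtains $\widetilde{\alpha}$ by applying the universal property to the composite $\phi_u\circ\alpha$. The only difference is that you spell out the invertibility check (constructing $\beta$ from $\alpha^{-1}$ and collapsing both composites to $\id_{A_u}$ via uniqueness), a step the paper leaves implicit in its one-line proof.
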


\begin{proof}
The composition $A\xrightarrow{\,\alpha\,}A\xrightarrow{\,\phi_u\,}A_u$ is a *-homomorphism, and the existence and uniqueness of $\widetilde{\alpha}\in\mathrm{Aut}(A_u)$ follows from the universal property. 
\end{proof}

\begin{prop}\label{prop:contact}
Let $A$ be a *-algebra, $(A_u,\phi_u)$ its universal C*-algebra (assume that it exists), $G$ a topological group, and $\alpha:G\to\mathrm{Aut}(A)$ a group action. Then,
there exists a unique action $\widetilde{\alpha}:G\to\mathrm{Aut}(A_u)$ such that $\phi_u\circ\alpha_g=\widetilde{\alpha}_g\circ\phi_u$. If, for every $a\in\phi_u(A)$, the map
\begin{equation}\label{eq:GmapA}
G\to A_u , \qquad
g\mapsto\widetilde\alpha_g(a) ,
\end{equation}
is continuous, then $\widetilde{\alpha}$ is strongly continuous.
\end{prop}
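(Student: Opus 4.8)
The plan is to construct $\widetilde\alpha$ pointwise using Lemma~\ref{lemma:autA} and to derive its group-action and continuity properties afterwards. First, for each fixed $g\in G$, applying Lemma~\ref{lemma:autA} to $\alpha_g\in\mathrm{Aut}(A)$ yields a unique $\widetilde\alpha_g\in\mathrm{Aut}(A_u)$ with $\phi_u\circ\alpha_g=\widetilde\alpha_g\circ\phi_u$; defining $\widetilde\alpha$ this way makes the uniqueness claim automatic, since each $\widetilde\alpha_g$ is already the \emph{unique} lift of $\alpha_g$. To see that $g\mapsto\widetilde\alpha_g$ is a homomorphism, for $g,h\in G$ I would compute
\[
\phi_u\circ\alpha_{gh}=\phi_u\circ\alpha_g\circ\alpha_h=\widetilde\alpha_g\circ\phi_u\circ\alpha_h=\widetilde\alpha_g\circ\widetilde\alpha_h\circ\phi_u ,
\]
so that $\widetilde\alpha_g\circ\widetilde\alpha_h$ is a lift of $\alpha_{gh}$ and must therefore equal $\widetilde\alpha_{gh}$ by uniqueness; the same reasoning applied to $e\in G$ gives $\widetilde\alpha_e=\id_{A_u}$. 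This establishes the first assertion.

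For the continuity statement I would use two facts. First, $\phi_u(A)$ is dense in $A_u$: in the construction of the C*-enveloping algebra it is exactly the dense image $A/J$, and this property passes to the abstract universal C*-algebra through the isomorphism of Prop.~\ref{prop:unique}. Second, each $\widetilde\alpha_g$ is a *-automorphism of a C*-algebra and hence isometric, so $\widetilde\alpha$ is an isometric action. Strong continuity then follows from an $\varepsilon/3$ argument in the same spirit as the proof of Lemma~\ref{lemma:cGa}: given $b\in A_u$, a net $g_\lambda\to g$ in $G$, and $\varepsilon>0$, I would choose $a\in\phi_u(A)$ with $\|b-a\|<\varepsilon/3$ and bound
\[
\|\widetilde\alpha_{g_\lambda}(b)-\widetilde\alpha_g(b)\|\leq\|\widetilde\alpha_{g_\lambda}(b-a)\|+\|\widetilde\alpha_{g_\lambda}(a)-\widetilde\alpha_g(a)\|+\|\widetilde\alpha_g(a-b)\| .
\]
By isometry the outer two terms both equal $\|b-a\|<\varepsilon/3$, while the middle term tends to $0$ by the hypothesis that the map \eqref{eq:GmapA} is continuous on $\phi_u(A)$. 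Hence $\widetilde\alpha_{g_\lambda}(b)\to\widetilde\alpha_g(b)$, i.e.~$\widetilde\alpha$ is strongly continuous.

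The main obstacle is conceptual rather than computational: the hypothesis only provides continuity of $g\mapsto\widetilde\alpha_g(a)$ on the dense subalgebra $\phi_u(A)$, and the work lies in transporting it to all of $A_u$. The two features that make this possible are the density of $\phi_u(A)$ and the isometry of the $\widetilde\alpha_g$---without the latter, the outer terms of the estimate could not be controlled uniformly in $\lambda$. Finally, it is important to phrase the argument in terms of nets rather than sequences, since $G$ is only assumed to be a topological group and need not satisfy any countability axiom.
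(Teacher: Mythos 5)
Your proposal is correct and follows essentially the same route as the paper: lifting each $\alpha_g$ via Lemma~\ref{lemma:autA}, deducing the group law from uniqueness of the lift, and then transporting continuity from the dense subalgebra $\phi_u(A)$ to $A_u$ by an $\varepsilon/3$ estimate that exploits the uniform norm bound on the $\widetilde\alpha_g$. The only (cosmetic) differences are that you phrase the group law algebraically rather than via commutative diagrams, and your $\varepsilon/3$ argument with a single approximant $a$ is in fact slightly cleaner than the paper's, which approximates by a sequence $(a_k)$ and uses contractivity where you use isometry.
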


\begin{proof}
For all $g\in G$, the existence and uniqueness of $\widetilde{\alpha}_g$ follows from Lemma \ref{lemma:autA}.
Let $g_1,g_2\in G$ and denote by $e\in G$ the neutral element. From the commutative diagram
\[
\begin{tikzpicture}[>=To]

\node (a) at (0,1.8) {$A$};
\node (d) at (2.5,1.8) {$A$};
\node (b) at (0,0) {$A_u$};
\node (c) at (2.5,0) {$A_u$};

\draw[font=\footnotesize,->]
	(a) edge node[above] {$\alpha_e=\id_A$} (d)
	(a) edge node[left] {$\phi_u$} (b)
	(d) edge node[right] {$\phi_u$} (c)
	(b) edge[transform canvas={yshift=3pt}] node[above] {$\widetilde{\alpha}_e$} (c)
	(b) edge[transform canvas={yshift=-1pt}] node[below] {$\id_{A_u}$} (c);

\end{tikzpicture}
\]
and from Lemma \ref{lemma:autA} we deduce that $\widetilde{\alpha}_e=\id_{A_u}$. From the commutative diagram
\[
\begin{tikzpicture}[>=To,scale=1.2]

\node (a) at (0,1.8) {$A$};
\node (b) at (2.5,1.8) {$A$};
\node (c) at (5,1.8) {$A$};
\node (d) at (0,0) {$A_u$};
\node (e) at (2.5,0) {$A_u$};
\node (f) at (5,0) {$A_u$};

\draw[font=\footnotesize,->]
	(a) edge node[left] {$\phi_u$} (d)
	(b) edge node[fill=white] {$\phi_u$} (e)
	(c) edge node[right] {$\phi_u$} (f)
	(a) edge node[above] {$\alpha_{g_2}$} (b)
	(b) edge node[above] {$\alpha_{g_1}$} (c)
	(d) edge node[above] {$\widetilde\alpha_{g_2}$} (e)
	(e) edge node[above] {$\widetilde\alpha_{g_1}$} (f);

\end{tikzpicture}
\]
since the composition of the horizontal arrows on top is $\alpha_{g_1g_2}$, we get a commutative diagram
\[
\begin{tikzpicture}[>=To]

\node (a) at (0,1.8) {$A$};
\node (d) at (2.5,1.8) {$A$};
\node (b) at (0,0) {$A_u$};
\node (c) at (2.5,0) {$A_u$};

\draw[font=\footnotesize,->]
	(a) edge node[above] {$\alpha_{g_1g_2}$} (d)
	(a) edge node[left] {$\phi_u$} (b)
	(d) edge node[right] {$\phi_u$} (c)
	(b) edge[transform canvas={yshift=3pt}] node[above] {$\widetilde\alpha_{g_1}\widetilde\alpha_{g_2}$} (c)
	(b) edge[transform canvas={yshift=-1pt}] node[below] {$\widetilde\alpha_{g_1g_2}$} (c);

\end{tikzpicture}
\]
From Lemma \ref{lemma:autA} we deduce that $\widetilde\alpha_{g_1}\widetilde\alpha_{g_2}=\widetilde\alpha_{g_1g_2}$, i.e.~$\widetilde{\alpha}$ is a group action.

It remains to prove that, if the map \eqref{eq:GmapA} is continuous for every $a\in D:=\phi_u(A)$, then it is continuous for every $a\in A_u$. We use the density of $D$ in $A_u$ and a standard $\varepsilon/3$ argument.

Let $(g_\lambda)_{\lambda\in\Lambda}$ be a net in $G$ convergent to an element $g\in G$, call $T_\lambda:=\widetilde\alpha_{g_\lambda}$ and $T=\widetilde\alpha_{g}$.
Since *-homomorphisms between C*-algebras are contractive, one has $\|T_\lambda\|\leq 1$ and $\|T\|\leq 1$.

Let $a\in A_u$ be a fixed element (arbitrary) and $\varepsilon>0$. Due to the density of $D$, there exists a sequence $(a_k)_{k\in\N}$ in $D$ convergent to $a$. Thus, there exists $k_0$ such that, for every $\lambda\in\Lambda$ and $k\geq k_0$ one has
\[
\|T_\lambda a_k-T_\lambda a\|\leq \|a_k-a\|<\varepsilon/3.
\]
By hypothesis, $\lim_{\lambda\in\Lambda}T_\lambda a_k=Ta_k$ since $a_k\in D$.
Thus, for all $k\in\N$ there exists $\lambda_k$ such that, if $\lambda\geq \lambda_k$, then
\[
\|T_\lambda a_k-Ta_k\|<\varepsilon/3.
\]
Since $T$ is continuous, there exists $k_1$ such that, for all $k\geq k_1$, one has
\[
\|Ta_k-Ta\|<\varepsilon/3.
\]
If $k\geq\max\{k_0,k_1\}$ and $\lambda\geq \lambda_k$, then
\[
\|T_\lambda a-Ta\|\leq \|T_\lambda a-T_\lambda a_k\|+\|T_\lambda a_k-Ta_k\|+\|Ta_k-Ta\|<\varepsilon .
\]
Thus, $T_\lambda a\to Ta$. That is, for all $a\in A_u$, the map $G\to A$, $g\mapsto \widetilde{\alpha}_g(a)$, is continuous.
\end{proof}

\subsection{Graph C*-algebras}\label{sec:4}

In this section, we recall some general results from the theory of graph C*-algebras. Our main references are \cite{BPRS,R05}. We adopt the conventions of \cite{BPRS}, i.e.~the roles of source and range maps are exchanged with respect to~\cite{R05}.

Let $E=(E^0,E^1,s,t)$ be a directed graph, where $E^0$ is the set of vertices, $E^1$ is the set of edges, \mbox{$s:E^1\to E^0$} is the source map,
and \mbox{$t:E^1\to E^0$} is the target (or range) map. The graph is called \emph{row-finite} if $s^{-1}(v)$ is a finite set for every $v\in E^0$. It is called \emph{finite} if both sets $E^0$ and $E^1$ are finite. A \emph{sink} is a vertex $v$ with no outgoing edges, that is $s^{-1}(v)=\emptyset$.

\begin{df}[Leavitt path algebra]\label{def:Leavitt}
The \emph{Leavitt path algebra} $L_{\C}(E)$ of a row-finite graph $E$ is the universal algebra generated by a family
\begin{equation}\label{eq:CKfamily}
\big\{P_v,S_e,S_e^*:v\in E^0,e\in E^1\big\}
\end{equation}
satisfying the relations
\begin{alignat}{2}
P_vP_w &=\begin{cases}
P_v &\text{if }v=w \\
0 &\text{if }v\neq w
\end{cases}
 && \text{for all }v,w\in E^0,\tag{\text{CK0}} \label{eq:CK0} \\
S_e^*S_f &=\begin{cases}
P_{t(e)} &\text{if }e=f \\
0 &\text{if }e\neq f
\end{cases}\qquad
 && \text{for all }e\in E^1, \tag{\text{CK1}} \label{eq:CK1} \\
\sum_{e\in E^1:\,s(e)=v}\!\! S_eS_e^* &=P_v && \text{for all }v\in E^0\text{ that are not sinks},\tag{CK2} \label{eq:CK2} \\
P_{s(e)}S_e &=S_eP_{t(e)}=S_e && \text{for all }e\in E^1, \tag{CK3} \label{eq:CK3} \\
P_{t(e)}S_e^* &=S_e^*P_{s(e)}=S_e^* && \text{for all }e\in E^1. \tag{CK4} \label{eq:CK4}
\end{alignat}
\end{df}

A collection \eqref{eq:CKfamily} satisfying (\ref{eq:CK0}-\ref{eq:CK4}) is called a \emph{Cuntz--Krieger $E$-family}.
The algebra $L_{\C}(E)$ is universal in the sense that whenever $\{Q_v,T_e\}$ is another Cuntz--Krieger $E$-family in an algebra $A$, then there exists a unique *-homomorphism $L_{\C}(E)\to A$ that maps each $P_v$ to $Q_v$, $S_e$ to $T_e$ and $S_e^*$ to $T_e^*$.
We stress that the above universal property holds in the category of non-unital algebras, but
when $E$ is finite $L_{\C}(E)$ is unital with unit element $1=\sum_{v\in E^0}P_v$. It is also a *-algebra with involution defined by $P_v^*:=P_v$ and by declaring $S_e^*$ to be the adjoint of $S_e$ (as the notation suggests). Note that in a *-algebra
\eqref{eq:CK4} follows from \eqref{eq:CK3} by adjunction.

Leavitt path algebras can be defined over any field, but we will only consider the field of complex numbers.
The universal algebra of a Cuntz--Krieger $E$-family can be explicitly constructed in the obvious way, as a quotient of a free algebra. Similar definitions can be given for C*-algebras (with minimal modifications). In the case of C*-algebras the existence of the universal object is a non-trivial theorem \cite{R05}.

\begin{df}[Graph C*-algebra]
The \emph{graph \mbox{C*-algebra}} $C^*(E)$ of a row-finite graph $E$ is the universal \mbox{C*-algebra} generated by mutually 
orthogonal projections $\big\{P_v:v\in E^0\big\}$ and partial isometries $\big\{S_e:e\in E^1\big\}$ satisfying the \emph{Cuntz--Krieger relations}:
\begin{alignat}{2}
S_e^*S_e &=P_{t(e)} \qquad && \text{for all }e\in E^1\text{, and}\tag{\text{CK1'}} \label{eq:CK1p} \\
\sum_{e\in E^1:\,s(e)=v}\!\! S_eS_e^* &=P_v && \text{for all }v\in E^0\text{ that are not sinks.}\tag{CK2'} \label{eq:CK2p}
\end{alignat}
\end{df}

In $C^*(E)$, for all $v\in E^0$ and $e,f\in E^1$, one has
\begin{subequations}\label{eq:heavilyuse}
\begin{alignat}{2}
S_eP_v &=S_e \qquad\quad && \text{if }v=t(e) ,\\
S_e^*S_f &=0 && \text{if }e\neq f , \\
P_vS_e &=S_e && \text{if }v=s(e) .
\end{alignat}
\end{subequations}
The first relation follows from the definition of partial isometry and the fact that $P_v=S_e^*S_e$ is the source projection of $S_e$.
The second follows from the fact that the CK' relations imply that the range projections $S_eS_e^*$ are also mutually orthogonal (see \cite[page 309]{BPRS}). The third follows from the first two using \eqref{eq:CK2p}.

As a consequence, there is a *-homomorphism $L_{\C}(E)\to C^*(E)$ defined in the obvious way on Cuntz-Krieger $E$-families (with an abuse of notations, $P_v\mapsto P_v$ and $S_e\mapsto S_e$). One can prove that it is injective by using the
Graded Uniqueness Theorem, cf.~\cite{AAM17}.
From now on, we will think of $L_{\C}(E)$ as a dense *-subalgebra of $C^*(E)$ (dense since it contains all the generators).

\medskip

Any graph C*- algebra $C^*(E)$ can be endowed with a natural circle action 
\[
\alpha:U(1)\longrightarrow\mathrm{Aut}(C^*(E)),
\]
called the \emph{gauge action}, and given on generators by
$$
\alpha_u(P_v)=P_v\;,\qquad
\alpha_u(S_e)=u S_e\;,
$$
for all $u\in U(1)$, $v\in E^0$, $e\in E^1$.

We give now a slight reformulation of \cite[Theorem~2.1]{BPRS} that is more suitable for the purposes of this work
(see also \cite[Theorem~2.2]{R05}).

\begin{thm}[Gauge-invariant Uniqueness Theorem]\label{thm:gut}
Let $E$ be a row-finite graph with Cuntz--Krieger family $\{S,P\}$, 
let $A$ be a C*-algebra with a continuous action of $U(1)$ and $\rho:C^*(E)\to A$ a $U(1)$-equivariant 
*-homomorphism. If $\rho(P_v)\neq 0$ for all $v\in E^0$, then $\rho$ is injective.
\end{thm}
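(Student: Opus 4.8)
The plan is to combine the conditional-expectation machinery of Lemma \ref{lemma:faithcond} with the AF structure of the gauge-fixed subalgebra, mimicking the proof of Lemma \ref{lemma:faithcond}\ref{en:faithcondB}. Write $B=C^*(E)$ and let $\alpha$ be the gauge action. On generators $u\mapsto\alpha_u(S_e)=uS_e$ and $u\mapsto\alpha_u(P_v)=P_v$ are continuous, so since $C^*(E)$ is a universal C*-algebra, Proposition \ref{prop:contact} shows that $\alpha$ is strongly continuous and $(B,U(1),\alpha)$ is a C*-dynamical system. Let $\beta$ denote the given continuous action on $A$. By Lemma \ref{lemma:faithcond}\ref{en:faithcondA} applied to $(B,U(1),\alpha)$ and to $(A,U(1),\beta)$, both maps
\[
\vartheta_B(x)=\int_{U(1)}\alpha_u(x)\,d\mu_u , \qquad
\vartheta_A(y)=\int_{U(1)}\beta_u(y)\,d\mu_u ,
\]
are faithful conditional expectations onto the fixed-point algebras $B^{U(1)}$ and $A^{U(1)}$. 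Equivariance of $\rho$ gives, after pulling $\rho$ through the Bochner integral, the intertwining relation $\rho\circ\vartheta_B=\vartheta_A\circ\rho$, which plays the role of \eqref{eq:faithpt} with $\vartheta_A$ in place of $\widetilde\vartheta$.

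First I would reduce injectivity of $\rho$ to injectivity of its restriction to the core $\mathcal{F}:=B^{U(1)}$. Let $a\in B$ be positive with $\rho(a)=0$. Then $\rho\big(\vartheta_B(a)\big)=\vartheta_A\big(\rho(a)\big)=0$, and $\vartheta_B(a)$ is a positive element of $\mathcal{F}$. If $\rho|_{\mathcal{F}}$ is injective, hence faithful, we conclude $\vartheta_B(a)=0$, and faithfulness of $\vartheta_B$ forces $a=0$. Thus $\rho$ is faithful, and a faithful $*$-homomorphism is injective. So everything comes down to showing that $\rho|_{\mathcal{F}}$ is injective.

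For that I would use the structure of $\mathcal{F}$ as the degree-zero part of the $\Z$-grading induced by $\alpha$ (where $S_\mu S_\nu^*$ has degree $|\mu|-|\nu|$): it is the closure of the span of the elements $S_\mu S_\nu^*$ with $|\mu|=|\nu|$ and $t(\mu)=t(\nu)$, and this presents $\mathcal{F}$ as an AF algebra, the closure of an increasing union of finite-dimensional subalgebras $\mathcal{F}_k$, each a direct sum over vertices $w$ of a matrix algebra $M_{n_w}(\C)$ with matrix units $\{S_\mu S_\nu^*:|\mu|=|\nu|=k,\ t(\mu)=t(\nu)=w\}$. Since an injective $*$-homomorphism between C*-algebras is isometric, it suffices to prove that $\rho$ is isometric, equivalently injective, on each $\mathcal{F}_k$; isometry then passes to the closure $\mathcal{F}$ by density. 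On the finite-dimensional $\mathcal{F}_k$, a $*$-homomorphism is injective exactly when it is nonzero on every simple summand, and it is nonzero on the summand indexed by $w$ as soon as it is nonzero on one minimal projection there, for instance a diagonal matrix unit $S_\mu S_\mu^*$ with $t(\mu)=w$. Now by \eqref{eq:CK1p} and the hypothesis, $\rho(S_\mu)^*\rho(S_\mu)=\rho(S_\mu^*S_\mu)=\rho(P_w)\neq 0$, so $\rho(S_\mu)\neq 0$ and therefore its range projection $\rho(S_\mu S_\mu^*)=\rho(S_\mu)\rho(S_\mu)^*$ is nonzero. Hence $\rho$ is nonzero on every block of every $\mathcal{F}_k$, and injectivity on $\mathcal{F}$ follows.

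The main obstacle is the structure theory packaged into the third paragraph: identifying $\mathcal{F}=B^{U(1)}$ precisely as the closed span of the $S_\mu S_\nu^*$ of equal length, and organizing these into \emph{nested} finite-dimensional subalgebras. The nesting requires care at sinks, where \eqref{eq:CK2p} is unavailable so a matrix unit $S_\mu S_\nu^*$ with $t(\mu)$ a sink cannot be refined and must be carried unchanged into $\mathcal{F}_{k+1}$, whereas at non-sinks one uses $S_\mu S_\nu^*=\sum_{e}S_{\mu e}S_{\nu e}^*$ to embed $\mathcal{F}_k$ into $\mathcal{F}_{k+1}$. Once this AF picture is established, the hypothesis $\rho(P_v)\neq 0$ enters cleanly through the Cuntz--Krieger relation \eqref{eq:CK1p}, and the conditional-expectation reduction does the rest.
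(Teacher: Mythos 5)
Your proposal is correct, but note that the paper does not actually prove this theorem: it states it as a reformulation of \cite[Theorem~2.1]{BPRS} and uses it as a black box. What you have done is reconstruct, essentially verbatim, the proof from that cited reference: the reduction of injectivity to the fixed-point algebra via the faithful conditional expectation (which is exactly the pattern of Lemma~\ref{lemma:faithcond}\ref{en:faithcondB}, so your argument meshes well with the paper's own machinery), followed by the analysis of the AF core spanned by the elements $S_\mu S_\nu^*$ with $|\mu|=|\nu|$, where the hypothesis $\rho(P_v)\neq 0$ enters through \eqref{eq:CK1p}. Two small imprecisions are worth flagging. First, for a row-finite graph with infinitely many vertices the algebras $\mathcal{F}_k$ need not be finite-dimensional: a single vertex can receive paths of length $k$ from infinitely many sources, so a block can be a copy of the compact operators rather than a matrix algebra $M_{n_w}(\C)$, and there can be infinitely many blocks. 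Your argument survives because each block is still simple and spanned by matrix units, so the kernel of $\rho|_{\mathcal{F}_k}$, being an ideal, must contain an entire block and hence some diagonal projection $S_\mu S_\mu^*$; but the phrase ``finite-dimensional'' should be dropped in the general statement. Second, Proposition~\ref{prop:contact} is stated for \emph{unital} *-algebras and their universal C*-algebras, whereas $L_{\C}(E)$ and $C^*(E)$ are non-unital when $E^0$ is infinite; strong continuity of the gauge action should instead be checked directly (it holds by the same $\varepsilon/3$ argument, since $u\mapsto\alpha_u(S_\mu S_\nu^*)=u^{|\mu|-|\nu|}S_\mu S_\nu^*$ is continuous on the dense span and each $\alpha_u$ is isometric). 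For the finite graphs $\Sigma_n$ and $\widetilde{\Sigma}_n$ actually used in the paper, both caveats disappear and your proof is complete as written.
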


\section{The coordinate algebra of a quantum sphere}\label{sec:3}
Let $n\in\N$ and $0\leq q<1$. We denote by $\mathcal{A}(S^{2n+1}_q)$ the unital (associative, complex) *-algebra generated by elements $\{z_i,z_i^*\}_{i=0}^n$ with relations
\begin{subequations}\label{eq:qsphere}
\begin{align}
z_jz_i &=qz_iz_j &&\forall\;0\leq i<j\leq n \;, \label{eq:qsphereA} \\
z_i^*z_j &=qz_jz_i^* &&\forall\;0\leq i\neq j\leq n \;, \label{eq:qsphereB} \\
z_i^*z_i &=z_iz_i^*+(1-q^2)\sum\nolimits_{j=i+1}^n z_jz_j^* 
    &&\forall\;0\leq i\leq n \;,\label{eq:qsphereC} \\
z_0z_0^*+z_1z_1^* &+\ldots+z_nz_n^*=1 \;. \label{eq:qsphereE}
\end{align}
\end{subequations}
Here we adopt the original notations of \cite{VS91}, which are also the notations adopted in \cite{HS02}.

Observe that $z_n$ is normal and that $\mathcal{A}(S^{1}_q)\cong\mathcal{A}(S^{1})$ is generated by a single unitary operator, that we can identify with the identity function on the unit circle $S^1\subseteq\C$.

The relations for $q=0$, written explicitly are
\begin{subequations}\label{eq:qspherep}
\begin{align}
z_iz_j &=0 &&\forall\;i>j \;, \label{eq:qsphereAp} \\
z_i^*z_j &=0 &&\forall\;i\neq j \;, \label{eq:qsphereBp} \\
z_i^*z_i &=\sum\nolimits_{j\geq i} z_jz_j^*  \;,\label{eq:qsphereCp} \\
\sum\nolimits_jz_jz_j^* & =1 \;. \label{eq:qsphereEp}
\end{align}
\end{subequations}

Recall that an algebra $A$ is called \emph{$G$-graded}, where $G$ is a group, if it has a decomposition $A=\bigoplus_{g\in G}A_g$ into vector subspaces $\{A_g:g\in G\}$ satisfying $A_g\cdot A_{g'}\subseteq A_{gg'}$ for all $g,g'\in G$. We call $A_g$ a \emph{spectral subspace} of $A$, and we say that its elements are \emph{homogeneous of degree $g$}.

\medskip

An action $\alpha$ of $\mv{t}=(t_0,\ldots,t_n)\in\mathbb{T}^{n+1}$ on $\mathcal{A}(S^{2n+1}_q)$ is defined on generators by
\begin{equation}\label{eq:Taction}
\alpha_{\mv{t}}(z_i):=t_iz_i ,\qquad\forall \;0\leq i\leq n .
\end{equation}
The action \eqref{eq:Taction} defines a $\Z^{n+1}$-grading in the usual way:
an element $a\in\mathcal{A}(S^{2n+1}_q)$ has degree $\mv{m}=(m_0,\ldots,m_n)\in\Z^{n+1}$ if
\begin{equation}\label{eq:spectralsubspace}
\alpha_{\mv{t}}(a)=t_0^{m_0}\ldots t_n^{m_n}a \qquad\text{for all }\mv{t}\in\mathbb{T}^{n+1}.
\end{equation}
The composition of \eqref{eq:Taction} with the diagonal embedding
\[
U(1)\to\mathbb{T}^{n+1} , \qquad u\mapsto (u,\ldots,u) ,
\]
gives an action of $U(1)$ on $\mathcal{A}(S^{2n+1}_q)$.
For the sake of completeness, we mention that the $\Z$-grading associated to this action is a strong grading, as proved for example in a more general context in \cite{DL15} (see also \cite[Sect.~5.1]{ADL15}). This means that $\mathcal{A}(S^{2n+1}_q)$ is a Hopf-Galois extension of its degree $0$ part, cf.~e.g.~\cite[Theorem 8.1.7]{Mon93}. The subalgebra of $U(1)$-invariant elements is denoted 
$\mathcal{A}(\C P^n_q)$ and the geometric picture is that of a principal $U(1)$-bundle $S^{2n+1}_q\to\C P^n_q$ with total space a quantum sphere and base space a quantum projective space (see e.g.~\cite{ADL15,AKL14} and references therein).

The relevant subalgebra for us is the one of $\mathbb{T}^{n+1}$-invariant elements, i.e.~elements of degree $\mv{m}=(0,\ldots,0)$. This is strictly smaller than $\mathcal{A}(\C P^n_q)$ and will be studied in the next subsection.

\subsection{Algebraic properties}\label{sec:alg}
Here we collect some preliminary results about the algebra $\mathcal{A}(S^{2n+1}_q)$ that we need later on to study the associated universal C*-algebra.

\begin{prop}\label{prop:Zproj}
Let $q=0$. Then $z_i$ is a partial isometry, for all $0\leq i\leq n$, and
$$
\bigl\{z_iz_i^*:0\leq i\leq n\bigr\}
$$
is a set of mutually orthogonal projections with sum $1$.
\end{prop}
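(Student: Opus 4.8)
The statement has three parts, two of which are essentially immediate. The relation $\sum_j z_jz_j^*=1$ is exactly \eqref{eq:qsphereEp}, so the sum of the $z_iz_i^*$ is $1$ by definition. Mutual orthogonality is also quick: for $i\neq j$ one has $(z_iz_i^*)(z_jz_j^*)=z_i(z_i^*z_j)z_j^*=0$ by \eqref{eq:qsphereBp}, and the reverse product vanishes by taking adjoints. The real content, then, is to show that each $z_i$ is a partial isometry; once this is done, self-adjointness of $z_iz_i^*$ together with idempotency (from the partial-isometry identity) upgrades mutual orthogonality to a statement about genuine projections.

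The plan for the partial isometry property is to prove the single identity $z_iz_i^*z_i=z_i$, from which everything else follows. The idea is to feed relation \eqref{eq:qsphereCp} into the completeness relation \eqref{eq:qsphereEp}. Concretely, \eqref{eq:qsphereCp} gives $z_i^*z_i=\sum_{j\geq i}z_jz_j^*$, and subtracting \eqref{eq:qsphereEp} yields $z_i^*z_i-1=-\sum_{j<i}z_jz_j^*$. Multiplying on the left by $z_i$ turns the right-hand side into $-\sum_{j<i}z_iz_jz_j^*$, and here is where the vanishing relation \eqref{eq:qsphereAp} intervenes: each factor $z_iz_j$ with $j<i$ (that is, $i>j$) is zero. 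Hence $z_i(z_i^*z_i-1)=0$, i.e.\ $z_iz_i^*z_i=z_i$, as desired.

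With the partial-isometry identity in hand the rest is formal. Setting $Q_i:=z_iz_i^*$, we have $Q_i^*=Q_i$ and $Q_i^2=(z_iz_i^*z_i)z_i^*=z_iz_i^*=Q_i$, so each $Q_i$ is a projection. Combined with the orthogonality $Q_iQ_j=0$ for $i\neq j$ coming from \eqref{eq:qsphereBp}, and the sum $\sum_iQ_i=1$ from \eqref{eq:qsphereEp}, this is precisely the claim.

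I expect the only genuine obstacle to be the middle step: one must think to rewrite $z_i^*z_i-1$ using completeness so as to expose exactly the \emph{lower}-index terms $z_jz_j^*$ with $j<i$, since those are the ones annihilated by $z_i$ via \eqref{eq:qsphereAp}. Reading \eqref{eq:qsphereCp} in isolation is not enough, because for $j>i$ the products $z_iz_j$ need not vanish; it is the interaction with \eqref{eq:qsphereEp} that isolates the tractable terms. Everything else is a routine consequence of the defining relations.
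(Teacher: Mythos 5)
Your proof is correct and follows essentially the same route as the paper: both rewrite \eqref{eq:qsphereCp} via the completeness relation \eqref{eq:qsphereEp} to get $z_i^*z_i=1-\sum_{j<i}z_jz_j^*$, then multiply by $z_i$ and use \eqref{eq:qsphereAp} to kill the lower-index terms, yielding $z_iz_i^*z_i=z_i$. The remaining deductions (projections, orthogonality via \eqref{eq:qsphereBp}, sum equal to $1$) match the paper's as well.
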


\begin{proof}
Using \eqref{eq:qsphereEp}, the relation \eqref{eq:qsphereCp} can be rewritten as
\begin{equation}\label{eq:qsphereDp}
z_i^*z_i =1-\sum\nolimits_{j<i} z_jz_j^* .
\end{equation}
From \eqref{eq:qsphereDp} and \eqref{eq:qsphereAp} it follows that
\[
z_iz_i^*z_i=z_i-\sum\nolimits_{j<i} \equalto{(z_iz_j)}{0}z_j^*=z_i ,
\]
thus proving the first claim. Since $z_i$ is a partial isometry, $z_iz_i^*$ is a projection, and orthogonality follows
from \eqref{eq:qsphereBp}. From \eqref{eq:qsphereEp} it follows that the sum of these projections is $1$.
\end{proof}

\begin{lemma}\label{lemma:commut}
Let $0\leq q<1$. Then,
\begin{enumerate}
\item\label{en:commutA} For every $0\leq i<j\leq n$, $z_iz_i^*$ and $z_j$ commute.

\item\label{en:commutB} For every $0\leq i\leq n$ and every positive integer $m$, one has
\begin{equation}\label{eq:relk}
z_i^*(z_i)^m=q^{2m}(z_i)^{m}z_i^*+(1-q^{2m})(z_i)^{m-1}\left(1-\sum\nolimits_{j=0}^{i-1}z_jz_j^*\right) .
\tag{rel$_m$}
\end{equation}
\end{enumerate}
\end{lemma}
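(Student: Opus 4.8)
The plan is to prove part \ref{en:commutA} by a direct manipulation of the commutation relations, and then to obtain part \ref{en:commutB} by induction on $m$, with part \ref{en:commutA} providing the key step that lets a factor of $z_i$ be passed through the term $\sum_{j=0}^{i-1}z_jz_j^*$.

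For part \ref{en:commutA}, I fix $0\le i<j\le n$ and compute $z_iz_i^*z_j$ directly. Applying \eqref{eq:qsphereB} (valid since $i\neq j$) gives $z_i^*z_j=qz_jz_i^*$, so that
\[
z_iz_i^*z_j=z_i(z_i^*z_j)=q\,z_iz_jz_i^*=(qz_iz_j)z_i^*=z_jz_iz_i^*,
\]
where the last equality is the defining relation \eqref{eq:qsphereA} rewritten as $qz_iz_j=z_jz_i$. Presenting it this way avoids dividing by $q$, so the argument works uniformly for all $0\le q<1$, including $q=0$.

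For part \ref{en:commutB}, I argue by induction on $m$. The base case $m=1$ is merely a reformulation of \eqref{eq:qsphereC}: substituting $\sum_{j=i+1}^n z_jz_j^*=1-z_iz_i^*-\sum_{j=0}^{i-1}z_jz_j^*$, which follows from \eqref{eq:qsphereE}, and collecting the $z_iz_i^*$ terms turns \eqref{eq:qsphereC} into exactly (rel$_1$). For the inductive step I write $z_i^*(z_i)^{m+1}=\bigl(z_i^*(z_i)^m\bigr)z_i$ and insert the inductive hypothesis, producing two summands. In the first, $q^{2m}(z_i)^mz_i^*z_i$, I replace $z_i^*z_i$ using (rel$_1$). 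In the second, I invoke part \ref{en:commutA} to move the trailing $z_i$ to the left past $1-\sum_{j=0}^{i-1}z_jz_j^*$, which raises the power of $z_i$ by one. Adding the results and simplifying the scalar coefficient via the identity $q^{2m}(1-q^2)+(1-q^{2m})=1-q^{2(m+1)}$ produces precisely (rel$_{m+1}$).

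The computations are routine throughout; the only point genuinely worth flagging is that the inductive step relies essentially on part \ref{en:commutA}, since without the commutation of $z_i$ with each $z_jz_j^*$ for $j<i$ one could not collect the extra power of $z_i$ and the recursion would not close.
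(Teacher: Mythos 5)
Your proposal is correct and follows essentially the same route as the paper: the base case (rel$_1$) is obtained by combining \eqref{eq:qsphereC} with \eqref{eq:qsphereE}, and the inductive step splits $z_i^*(z_i)^{m+1}=\bigl(z_i^*(z_i)^m\bigr)z_i$, using part \ref{en:commutA} to commute $z_i$ past $1-\sum_{j<i}z_jz_j^*$ and (rel$_1$) to absorb $z_i^*z_i$, exactly as in the paper's proof. The only difference is cosmetic: you spell out the short computation for part \ref{en:commutA} (which the paper leaves as ``a simple computation'') and you index the induction as $m\to m+1$ rather than $m-1\to m$.
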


\begin{proof}
\ref{en:commutA} This is a simple computation using \eqref{eq:qsphereA} and \eqref{eq:qsphereB}.

\medskip

\noindent
\ref{en:commutB} This is proved by induction on $m\geq 1$.
From \eqref{eq:qsphereC} and \eqref{eq:qsphereE} we derive (rel$_1$), that is
\begin{align*}
z_i^*z_i &=q^{2}z_iz_i^*+(1-q^{2})\sum\nolimits_{j=i}^nz_jz_j^*
&& \text{by \eqref{eq:qsphereC}} \\
 &=q^{2}z_iz_i^*+(1-q^{2})\left(1-\sum\nolimits_{j=0}^{i-1}z_jz_j^*\right) .
&& \text{by \eqref{eq:qsphereE}} 
\end{align*}
Now, let $m\geq 2$ and assume, by inductive hypothesis, that (rel$_{m-1}$) holds. Then,
\begin{align*}
z_i^* &(z_i)^m  =\big(z_i^*(z_i)^{m-1}\big)z_i \\
&=q^{2m-2}(z_i)^{m-1}z_i^*z_i+(1-q^{2m-2})(z_i)^{m-2}\left(1-\sum\nolimits_{j=0}^{i-1}z_jz_j^*\right)z_i
&& \text{by (rel$_{m-1}$)}\\
&=q^{2m-2}(z_i)^{m-1}z_i^*z_i+(1-q^{2m-2})(z_i)^{m-1}\left(1-\sum\nolimits_{j=0}^{i-1}z_jz_j^*\right)
&& \text{by point \ref{en:commutA}}\\
&=q^{2m}(z_i)^mz_i^*+\Big(q^{2m-2}(1-q^{2})+(1-q^{2m-2})\Big)
(z_i)^{m-1}\left(1-\sum\nolimits_{j=0}^{i-1}z_jz_j^*\right)
&& \text{by (rel$_1$)} \\
&=q^{2m}(z_i)^{m}z_i^*
+(1-q^{2m})(z_i)^{m-1}\left(1-\sum\nolimits_{j=0}^{i-1}z_jz_j^*\right)
.
\end{align*}
The last line is exactly the right hand side of \eqref{eq:relk}.
\end{proof}

Let us stress that Lemma \ref{lemma:commut}\ref{en:commutB} holds also for $q=0$ (although we will only need it for $q\neq 0$), and simplifies to $z_i^*(z_i)^m=(z_i)^{m-1}$.

\begin{prop}\label{lemma:genfam}
Let $0\leq q<1$. For all $\underline{i}=(i_0,\ldots,i_n)\in\N^n\times\Z$ and $\underline{j}=(j_0,\ldots,j_{n-1})\in\N^n$ define
\[
\inner{\underline{i},\underline{j}}:=z_0^{i_0}\ldots z_{n-1}^{i_{n-1}} z_n^{i_n}  \big( z_0^{j_0}\ldots z_{n-1}^{j_{n-1}} \big)^*
\]
if $i_n\geq 0$ and
\[
\inner{\underline{i},\underline{j}}:=z_0^{i_0}\ldots z_{n-1}^{i_{n-1}} (z_n^*)^{-i_n}  \big( z_0^{j_0}\ldots z_{n-1}^{j_{n-1}} \big)^*
\]
if $i_n<0$. Then, the set
\begin{equation}\label{eq:basis}
\big\{ \inner{\underline{i},\underline{j}} \;:\;
\underline{i}\in\N^n\times\Z,\underline{j}\in\N^n
\big\}
\end{equation}
is a generating family of the vector space $\mathcal{A}(S^{2n+1}_q)$.
\end{prop}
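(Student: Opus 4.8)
The plan is to prove the slightly stronger statement that the linear span $V\subseteq\mathcal{A}(S^{2n+1}_q)$ of the elements $\inner{\underline{i},\underline{j}}$ is a left ideal containing the unit. This suffices: by the empty-product convention $1=\inner{\underline 0,\underline 0}\in V$, and a left ideal containing $1$ is the whole algebra, since every $a\in\mathcal{A}(S^{2n+1}_q)$ is a linear combination of words $g_1\cdots g_m$ with $g_\ell\in\{z_i,z_i^*\}$, and $g_1\cdots g_m=g_1\big(g_2(\cdots(g_m\cdot 1))\big)$. Concretely, $V$ is a left ideal as soon as it is stable under left multiplication by each generator, because left multiplication by a word is the iteration of left multiplications by generators. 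Thus everything reduces to showing $g\cdot\inner{\underline{i},\underline{j}}\in V$ for every generator $g$ and every normal-ordered monomial.

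I view $\inner{\underline{i},\underline{j}}$ as a \emph{creation block} $z_0^{i_0}\cdots z_{n-1}^{i_{n-1}}$, a \emph{central factor} $z_n^{i_n}$ (read $(z_n^*)^{-i_n}$ when $i_n<0$), and an \emph{annihilation block} $(z_{n-1}^*)^{j_{n-1}}\cdots(z_0^*)^{j_0}$. The relations \eqref{eq:qsphereA}, \eqref{eq:qsphereB} and the adjoint of \eqref{eq:qsphereA} are all \emph{scalar} moves: up to a power of $q$ they let me slide any $z_l$ left and any $z_l^*$ right past an operator of \emph{different} index, keep the creation block increasing, and keep the annihilation block decreasing. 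These already settle the non-colliding generators. For $g=z_k$ with $k\le n$ (and $i_n\ge0$ when $k=n$) I slide $z_k$ rightward through the creation block by \eqref{eq:qsphereA}, picking up $q^{i_0+\cdots+i_{k-1}}$, until it merges into the $z_k$-power, or into the central factor when $k=n$. For $g=z_n^*$ with $i_n\le0$ I slide it in by \eqref{eq:qsphereB} and absorb it into the central factor. All of these land in $V$ using scalar moves only.

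The content is in the collision cases. For $g=z_k^*$ with $k<n$, I slide $z_k^*$ through $z_0^{i_0}\cdots z_{k-1}^{i_{k-1}}$ by \eqref{eq:qsphereB} and hit the block $z_k^{i_k}$ with \eqref{eq:relk}: the leading term $q^{2i_k}z_k^{i_k}z_k^*$ just continues the slide of $z_k^*$ into the annihilation block (one normal form with $j_k\mapsto j_k+1$), while each correction term carries $(z_k)^{i_k-1}$ times $1$ or $-z_jz_j^*$ with $j<k$. For $g=z_n^*$ with $i_n>0$, the two terms of \eqref{eq:relk} at $z_n^{i_n}$ \emph{merge}, because $z_nz_n^*=1-\sum_{j<n}z_jz_j^*$ by \eqref{eq:qsphereE} and $q^{2i_n}+(1-q^{2i_n})=1$, giving the clean identity $z_n^*z_n^{i_n}=z_n^{i_n-1}\big(1-\sum_{j<n}z_jz_j^*\big)$; its adjoint $z_n(z_n^*)^b=(z_n^*)^{b-1}\big(1-\sum_{j<n}z_jz_j^*\big)$ handles $g=z_n$ with $i_n<0$. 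In each correction term I am left with a stray pair $z_jz_j^*$, with $j$ strictly below the index of the collision, lodged between the central factor and the annihilation block.

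The one thing I genuinely have to verify, and which I expect to be the main obstacle, is that every such stray pair sorts back into normal form using scalar moves \emph{without provoking a fresh collision}. This is pure index bookkeeping: $z_j$ travels left into the creation block meeting only operators of index $>j$, so it merges into $z_j^{i_j}$ by \eqref{eq:qsphereA} up to a power of $q$; $z_j^*$ travels right into the annihilation block, and to its right sit only $z_{k+1},\dots,z_n$ and the annihilation operators, all of index $>j$, so \eqref{eq:qsphereB} and the adjoint of \eqref{eq:qsphereA} suffice and \eqref{eq:relk} is never re-triggered. Consequently each correction term is, up to a scalar, a single $\inner{\underline{i}',\underline{j}'}$, and each generator multiplication closes after a \emph{bounded} computation rather than an open-ended recursion. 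This gives $g\cdot\inner{\underline{i},\underline{j}}\in V$ in every case and finishes the proof. The commutation of $z_jz_j^*$ past the central factor, used silently throughout, is exactly Lemma \ref{lemma:commut}\ref{en:commutA} together with its adjoint.
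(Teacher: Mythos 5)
Your proof is correct, and its skeleton is the same as the paper's: the linear span of the monomials \eqref{eq:basis} is shown to be stable under left multiplication by each generator and, containing the unit, must then be all of $\mathcal{A}(S^{2n+1}_q)$, with \eqref{eq:qsphereA}, \eqref{eq:qsphereB} and the adjoint of \eqref{eq:qsphereA} supplying the scalar moves and \eqref{eq:relk} resolving the collisions. You diverge in two places. First, you observe that a left ideal containing $1$ is the whole algebra, so you drop the closure-under-involution step that the paper includes (it is indeed not needed for the conclusion). Second, and more substantively, the paper disposes of the correction terms produced by $z_m^*$ through an induction on the index $m$, via the inclusion $z_m^*J\subseteq J+z_0^{i_0}\cdots z_m^{i_m-1}\sum_{l<m}z_lJ$ and the inductive hypothesis $z_l^*J\subseteq J$ for $l<m$; you instead prove the sharper claim that each correction term is a \emph{single} basis monomial up to a power of $q$, because the stray pair $z_jz_j^*$ sorts back into normal form by scalar moves alone --- $z_j$ moving left and $z_j^*$ moving right meet only operators of index strictly larger than $j$, so \eqref{eq:relk} is never re-triggered. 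This no-new-collision bookkeeping is exactly what the paper itself does in its computation of $z_n\inner{\underline{i},\underline{j}}$ for $i_n<0$; applying it uniformly, as you do, eliminates the induction and closes each case in a bounded computation, at the price of having to verify the index bookkeeping explicitly, which you do correctly. Two minor remarks: for a collision at index $k<n$ the stray pair sits inside the creation block, between $z_k^{i_k-1}$ and $z_{k+1}^{i_{k+1}}$, not ``between the central factor and the annihilation block'' --- your sorting argument uses the correct position, so this is only a slip of phrasing; and since no negative powers of $q$ ever appear in your moves, your argument covers $q=0$ verbatim, a point the paper handles with a separate closing remark.
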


\begin{proof}
Let $J$ be the span of the vectors in the set \eqref{eq:basis}. Since $J$ contains the unit, it is enough to show that it is an ideal in $\mathcal{A}(S^{2n+1}_q)$, i.e.~that it is closed under involution and under left multiplication by the elements $z_i,z_i^*$ for all $0\leq i\leq n$ (automatically $J$ will be closed under right multiplication by the generators). The first claim is obvious, since the adjoint of
$\inner{\underline{i},\underline{j}}$ is, by construction, $\inner{\underline{j},-i_n,i_0,\ldots,i_{n-1}}$. Concerning the second claim, let us start with $q\neq 0$. Let $0\leq m\leq n$. If $m<n$, or $m=n$ and $i_n\geq 0$, using \eqref{eq:qsphereA} we get:
\[
z_m\inner{\underline{i},\underline{j}}=q^{i_0+\ldots+i_{m-1}}\inner{i_0,\ldots,i_m+1,\ldots,i_n,\underline{j}} ,
\]
which in particular implies that $z_mJ\subseteq J$ for $m<n$.
If $m=n$ and $i_n<0$, using \eqref{eq:qsphereA}:
\begin{align*}
z_n\inner{\underline{i},\underline{j}} &=q^{i_0+\ldots+i_{n-1}}\inner{i_0,\ldots,i_{n-1},0,\ldots,0}z_nz_n^*\inner{0,\ldots,0,i_n+1,\underline{j}} \\
\intertext{then using \eqref{eq:qsphereE}}
&=q^{i_0+\ldots+i_{n-1}}\inner{i_0,\ldots,i_{n-1},i_n+1,\underline{j}} \\
&\qquad -q^{i_0+\ldots+i_{n-1}}\sum_{k=0}^{n-1}\inner{i_0,\ldots,i_{n-1},0,\ldots,0}z_kz_k^*\inner{0,\ldots,0,i_n+1,\underline{j}} \\
\intertext{and \eqref{eq:qsphereA} (and adjoint) again}
&=q^{i_0+\ldots+i_{n-1}}\inner{i_0,\ldots,i_{n-1},i_n+1,\underline{j}} \\
&\qquad -\sum_{k=0}^{n-1}
q^{i_0+\ldots+i_k+3(i_{k+1}+\ldots +i_{n-1})+(|i_n|-1)}
\inner{\ldots,i_k+1,\ldots,i_n+1,\ldots,j_k+1,\ldots}.
\end{align*}
Thus, $z_nJ\subseteq J$ as well.

Next, if $i_n<0$, using \eqref{eq:qsphereB} we find:
\[
z_n^*\inner{\underline{i},\underline{j}}=q^{i_0+\ldots+i_{n-1}}\inner{i_0,\ldots,i_{n-1},i_n-1,\underline{j}} .
\]
Finally, for $m<n$ or $m=n$ and $i_n\geq 0$,
we use Lemma \ref{lemma:commut}\ref{en:commutB}, in the form
\begin{equation}\label{eq:lemmaintheform}
z_m^*(z_m)^{i_m}=q^{2i_m}(z_m)^{i_m}z_m^*+(1-q^{2i_m})(z_m)^{i_m-1}\left(1-\sum\nolimits_{l=0}^{m-1}z_lz_l^*\right) .
\end{equation}
We compute
\begin{align*}
z_m^*\inner{\underline{i},\underline{j}} 
&=z_m^*\inner{i_0,\ldots,i_{m-1},0,\ldots,0}(z_m)^{i_m}
\inner{0,\ldots,0,i_{m+1},\ldots,i_n,\underline{j}}
\\
\intertext{which using \eqref{eq:qsphereB} becomes}
&=q^{i_0+\ldots+i_{m-1}}\inner{i_0,\ldots,i_{m-1},0,\ldots,0}z_m^*(z_m)^{i_m}
\inner{0,\ldots,0,i_{m+1},\ldots,i_n,\underline{j}}
\\
\intertext{and using \eqref{eq:lemmaintheform} becomes}
&=q^{i_0+\ldots+i_{m-1}+2i_m}\inner{\underline{i},j_0,\ldots,j_m+1,\ldots,j_{n-1}} \\
+q^{i_0+\ldots+i_{m-1}}&(1-q^{2i_m})\inner{i_0,\ldots,i_m-1,0,\ldots,0}\left(1-\sum\nolimits_{l=0}^{m-1}z_lz_l^*\right)
\inner{0,\ldots,0,i_{m+1},\ldots,i_n,\underline{j}} .
\end{align*}
Since on the right hand side only generators $z_l,z_l^*$ with $l<m$ appear,
using the last relation one can prove that $z_m^*J\subseteq J$ by induction on $m$. If $z_l^*J\subseteq J$ for all $l<m$, from the last equality it follows that
\[
z_m^*J\subseteq J+z_0^{i_0}\ldots z_m^{i_{m}-1}\sum_{l=0}^{m-1}z_lJ .
\]
But we already proved that $z_iJ\subseteq J$ for all $i$, hence the claim.
It remains to prove the starting point of the induction, i.e.~the case $m=0$. This is easy. For $m=0$, using again \eqref{eq:lemmaintheform},
we get
\[
z_0^*\inner{\underline{i},\underline{j}}=q^{2i_0+i_1+\ldots+i_n}\inner{\underline{i},j_0+1,\ldots,j_{n-1}}+(1-q^{2i_0})\inner{i_0-1,\ldots,i_n,\underline{j}} ,
\]
thus completing the proof for $q\neq 0$. The computation can be repeated for $q=0$, with powers $q^k$ replaced by $\delta_{k,0}$ (observe that negative powers never appear in the proof).
\end{proof}

Using the Diamond Lemma like in \cite[Sect.~4.1.5]{KS97}, it should be possible to prove that the set \eqref{eq:basis} is a basis of the vector space $\mathcal{A}(S^{2n+1}_q)$. We shall not prove this statement since a generating family is enough for our purposes.

\smallskip

We denote by $\bigr\{\ket{\underline{k}}:\underline{k}=(k_0,\ldots,k_n)\in\N^n\times\Z\bigr\}$ the standard basis of $\ell^2(\N^n\times\Z)$. A strongly continuous unitary representation $U$ of $\mathbb{T}^{n+1}$ on $\ell^2(\N^n\times\Z)$ is given in the standard basis by
\begin{equation}\label{eq:unitary}
U_{\mv{t}}\ket{k_0,\ldots,k_{n-1},k_n}=t_0^{k_0}t_1^{k_1}\cdots t_n^{k_n} \ket{k_0,\ldots,k_{n-1},k_n}
\end{equation}
for all $\mv{t}=(t_0,\ldots,t_n)\in\mathbb{T}^{n+1}$.

\begin{prop}\label{prop:rep}
A bounded *-representation $\pi$ of $\mathcal{A}(S^{2n+1}_q)$ on $\ell^2(\N^n\times\Z)$ is defined on generators as follows.
If $q\neq 0$,
\begin{align*}
\pi(z_0)\ket{k_0,\ldots,k_n} &=\sqrt{1-q^{2(k_0+1)}}\ket{k_0+1,k_1,\ldots,k_n} , \\
\pi(z_i)\ket{k_0,\ldots,k_n} &=q^{k_0+\ldots+k_{i-1}}\sqrt{1-q^{2(k_i+1)}}\ket{k_0,\ldots,k_i+1,\ldots,k_n}  && \text{for }0<i<n, \\
\pi(z_n)\ket{k_0,\ldots,k_n} &=q^{k_0+\ldots+k_{n-1}}\ket{k_0,\ldots,k_{n-1},k_n+1} .
\end{align*}
If $q=0$, then $\pi(z_i)=:Z_i$ are given by the formulas
\begin{align*}
Z_0\ket{k_0,\ldots,k_n} &=\ket{k_0+1,k_1,\ldots,k_n} , \\
Z_i\ket{k_0,\ldots,k_n} &=\delta_{k_0,0}\ldots\delta_{k_{i-1},0}\ket{0,\ldots,0,k_i+1,k_{i+1},\ldots,k_n}  && \text{for }0<i<n, \\
Z_n\ket{k_0,\ldots,k_n} &=\delta_{k_0,0}\ldots\delta_{k_{n-1},0}\ket{0,\ldots,0,k_n+1} .
\end{align*}
This representation is $\mathbb{T}^{n+1}$-covariant with respect to the action \eqref{eq:Taction} and the representation \eqref{eq:unitary}.
\end{prop}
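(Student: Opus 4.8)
The plan is to verify directly that the prescribed formulas define bounded operators satisfying the defining relations \eqref{eq:qsphere} of $\mathcal{A}(S^{2n+1}_q)$, and then to check covariance. Since $\mathcal{A}(S^{2n+1}_q)$ is the universal unital *-algebra on the generators $\{z_i,z_i^*\}$ subject to those relations, a *-representation is uniquely determined by a choice of bounded operators $\pi(z_i)=:T_i$ on $\ell^2(\N^n\times\Z)$ satisfying the same relations (with $z_i$ replaced by $T_i$ and $z_i^*$ by $T_i^*$). So the bulk of the proof is to confirm that the operators defined on the standard basis do satisfy \eqref{eq:qsphereA}--\eqref{eq:qsphereE}.

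First I would record the adjoints. From the displayed action of each $T_i$ on $\ket{\underline{k}}$ one reads off that $T_i$ is a weighted shift that raises the $i$-th coordinate by $1$; hence $T_i^*$ lowers it by $1$ with the same weight, e.g.\ for $0<i<n$ one has $T_i^*\ket{\underline{k}}=q^{k_0+\ldots+k_{i-1}}\sqrt{1-q^{2k_i}}\,\ket{k_0,\ldots,k_i-1,\ldots,k_n}$ (and $T_i^*\ket{\underline{k}}=0$ if $k_i=0$), with the obvious analogues for $i=0,n$. I would also note that each $T_i$ is bounded: all weights lie in $[0,1]$ because $0\leq q<1$, so $\|T_i\|\leq 1$. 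Then I would check the four families of relations by letting each side act on an arbitrary basis vector $\ket{\underline{k}}$ and comparing the scalar coefficients. The twisted commutation relations \eqref{eq:qsphereA} and \eqref{eq:qsphereB} reduce to the identity $q^{k_0+\ldots+k_{j-1}}\cdot q^{k_0+\ldots+k_{i-1}}$ versus the $q$-shifted version, which produces exactly the factor $q$ once one tracks how applying $T_i$ first changes the exponent seen by $T_j$. The substitution relation \eqref{eq:qsphereC} becomes the scalar identity
\[
(1-q^{2k_i})q^{2(k_0+\ldots+k_{i-1})}=(1-q^{2(k_i+1)})q^{2(k_0+\ldots+k_{i-1})}+(1-q^2)\sum_{j=i+1}^n q^{2(k_0+\ldots+k_{j-1})}(1-q^{2(k_j+1)}),
\]
which telescopes; and \eqref{eq:qsphereE} is the statement that $\sum_i T_iT_i^*=\mathrm{id}$, i.e.\ $\sum_i q^{2(k_0+\ldots+k_{i-1})}(1-q^{2k_i})=1$ (with the convention $T_nT_n^*\ket{\underline{k}}=q^{2(k_0+\ldots+k_{n-1})}\ket{\underline{k}}$ since $z_n$ is a normal isometry-like shift), which is again a finite telescoping sum. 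The $q=0$ case is handled the same way, with every weight $q^{m}$ replaced by $\delta_{m,0}$; the Kronecker deltas appearing in the formulas for $Z_i$ are precisely what the products of characteristic-type weights degenerate to.

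For covariance, I would simply compute $U_{\mv t}\pi(z_i)U_{\mv t}^*$ on a basis vector: $U_{\mv t}^*$ contributes $\overline{t_0^{k_0}\cdots t_n^{k_n}}$, applying $\pi(z_i)$ shifts $k_i\mapsto k_i+1$, and $U_{\mv t}$ then contributes $t_0^{k_0}\cdots t_i^{k_i+1}\cdots t_n^{k_n}$; the net phase is $t_i$, so $U_{\mv t}\pi(z_i)U_{\mv t}^*=t_i\,\pi(z_i)=\pi(\alpha_{\mv t}(z_i))$, which is the covariance condition. The main obstacle is purely bookkeeping: keeping the $q$-exponents $k_0+\ldots+k_{i-1}$ aligned correctly when two shift operators are composed, since applying one shift alters the index seen by the next; the cleanest way to avoid sign/exponent errors is to verify each relation by its action on $\ket{\underline{k}}$ and reduce it to the corresponding finite geometric identity rather than manipulating the operators abstractly. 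Boundedness and well-definedness are immediate once the weights are seen to lie in $[0,1]$, so no convergence subtleties arise.
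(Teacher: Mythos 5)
Your overall strategy is exactly the paper's: the paper's proof consists of the single remark that the relations \eqref{eq:qsphere} are straightforward to check on basis vectors, together with the one-line covariance computation $U_{\mv{t}}\pi(z_i)U_{\mv{t}}^*=t_i\pi(z_i)=\pi(\alpha_{\mv{t}}(z_i))$, which you reproduce correctly. Your adjoint formulas, the boundedness remark, the checks of \eqref{eq:qsphereA}, \eqref{eq:qsphereB}, \eqref{eq:qsphereE}, and the $q=0$ degeneration are all fine. However, the one scalar identity you actually display --- the verification of \eqref{eq:qsphereC}, which is the only relation whose check is not routine --- is false as written. Your own adjoint formulas give
\[
\pi(z_i^*z_i)\ket{k_0,\ldots,k_n}=q^{2(k_0+\ldots+k_{i-1})}\bigl(1-q^{2(k_i+1)}\bigr)\ket{k_0,\ldots,k_n},
\qquad
\pi(z_iz_i^*)\ket{k_0,\ldots,k_n}=q^{2(k_0+\ldots+k_{i-1})}\bigl(1-q^{2k_i}\bigr)\ket{k_0,\ldots,k_n}
\]
for $0\leq i<n$: in your identity these two eigenvalues are interchanged. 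Moreover, each summand on the right-hand side should carry the eigenvalue of $z_jz_j^*$, namely $q^{2(k_0+\ldots+k_{j-1})}(1-q^{2k_j})$ for $j<n$ and $q^{2(k_0+\ldots+k_{n-1})}$ for $j=n$; your factor $1-q^{2(k_j+1)}$ is the eigenvalue of $z_j^*z_j$, and at $j=n$ it is not the eigenvalue of anything relevant, since $\pi(z_nz_n^*)=\pi(z_n^*z_n)$ acts with weight $q^{2(k_0+\ldots+k_{n-1})}$, carrying no factor in $k_n$ (which ranges over $\Z$). The identity you wrote cannot hold: on any basis vector with $k_n\geq 0$, its left-hand side minus the first term on the right equals $-(1-q^2)q^{2(k_0+\ldots+k_i)}$, which is strictly negative for $0<q<1$, whereas the remaining sum is nonnegative.

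The fix is mechanical and confirms your telescoping intuition. The identity to verify is
\[
q^{2(k_0+\ldots+k_{i-1})}\bigl(1-q^{2(k_i+1)}\bigr)
=q^{2(k_0+\ldots+k_{i-1})}\bigl(1-q^{2k_i}\bigr)
+(1-q^2)\biggl(\,\sum_{j=i+1}^{n-1}q^{2(k_0+\ldots+k_{j-1})}\bigl(1-q^{2k_j}\bigr)+q^{2(k_0+\ldots+k_{n-1})}\biggr),
\]
in which the bracketed expression telescopes to $q^{2(k_0+\ldots+k_i)}$, after which both sides agree. Note that in your check of \eqref{eq:qsphereE} you used precisely these correct eigenvalues, including the special $i=n$ term, so the error in \eqref{eq:qsphereC} is an internal inconsistency rather than a flaw of method; but as displayed, the key step of the verification fails and must be corrected before the argument is complete.
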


\begin{proof}
It is a straightforward to check that the relations \eqref{eq:qsphere} are satisfied, and since $U_{\mv{t}}\pi(z_i)U_{\mv{t}}^*=t_i\pi(z_i)=\pi(\alpha_{\mv{t}}(z_i))$, the representation $\pi$ is covariant (for all $0\leq q<1$).
\end{proof}

\begin{prop}\label{prop:joint}
Let $\pi$ be the representation in Prop.~\ref{prop:rep}.
\begin{enumerate}
\item\label{en:jointA}
The elements
\begin{equation}\label{eq:mutuallycommuting}
\big\{z_iz_i^*:0\leq i\leq n\big\} .
\end{equation}
generate a commutative subalgebra of $\mathcal{A}(S^{2n+1}_q)$ that we denote by
$\mathcal{A}(\Delta^n_q)$.

\item\label{en:jointB}
For $q\neq 0$, the set of joint eigenvalues of $\big(\pi(z_0z_0^*),\ldots,\pi(z_nz_n^*)\big)$ is given by all tuples
of the form
\[
\big(1-q^{2k_0},(1-q^{2k_1})q^{2k_0},\ldots,(1-q^{2k_{n-1}})q^{2(k_0+\ldots+k_{n-2})},q^{2(k_0+\ldots+k_{n-1})}\big)
\]
with $k_0,\ldots,k_{n-1}\in\N$.

\item\label{en:jointC}
For $q=0$, the set of joint eigenvalues is the set of corner points of the standard $n$-simplex $\Delta^n$.
\end{enumerate}
\end{prop}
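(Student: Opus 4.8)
The plan is to handle the three parts separately: the first is purely algebraic, while the last two follow by observing that each $\pi(z_iz_i^*)$ is diagonal in the standard basis of $\ell^2(\N^n\times\Z)$. For part \ref{en:jointA}, it suffices to show that $z_iz_i^*$ and $z_jz_j^*$ commute for $i<j$, the case $i=j$ being trivial. Lemma \ref{lemma:commut}\ref{en:commutA} already gives that $z_iz_i^*$ commutes with $z_j$; since $z_iz_i^*$ is self-adjoint, taking the adjoint of that identity shows it also commutes with $z_j^*$, and hence with the product $z_jz_j^*$. Thus the elements \eqref{eq:mutuallycommuting} pairwise commute and generate a commutative subalgebra $\mathcal{A}(\Delta^n_q)$.

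For part \ref{en:jointB}, I would first read off $\pi(z_i^*)=\pi(z_i)^*$ from the weighted-shift formulas of Prop.~\ref{prop:rep}, being careful to evaluate each weight at the correct source index, and then compose to obtain $\pi(z_iz_i^*)$. A direct calculation gives that each $\pi(z_iz_i^*)$ acts diagonally, with $\pi(z_iz_i^*)\ket{\underline{k}}=(1-q^{2k_i})q^{2(k_0+\ldots+k_{i-1})}\ket{\underline{k}}$ for $0\le i<n$ (the empty-sum convention covering $i=0$) and $\pi(z_nz_n^*)\ket{\underline{k}}=q^{2(k_0+\ldots+k_{n-1})}\ket{\underline{k}}$. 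Since every basis vector is a simultaneous eigenvector and the operators are diagonal, the set of joint eigenvalues is exactly the set of eigenvalue tuples read off these formulas: completeness holds because each listed tuple is attained on some $\ket{\underline{k}}$, and there are no others because a joint eigenvector is a combination of basis vectors sharing a common eigenvalue tuple. The key observation is that the tuple does not depend on $k_n$, so the distinct joint eigenvalues are parametrized by $(k_0,\ldots,k_{n-1})\in\N^n$, giving precisely the family in the statement.

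For part \ref{en:jointC}, I would specialize the computation of part \ref{en:jointB} to $q=0$, replacing each power $q^{2k}$ by $\delta_{k,0}$ exactly as in the proof of Prop.~\ref{lemma:genfam}. Then every coordinate of the eigenvalue tuple is $0$ or $1$; explicitly, the $i$-th coordinate equals $1$ (for $0\le i<n$) precisely when $k_0=\ldots=k_{i-1}=0$ and $k_i\ge 1$, and the $n$-th coordinate equals $1$ precisely when $k_0=\ldots=k_{n-1}=0$. By Prop.~\ref{prop:Zproj} the operators $\pi(z_iz_i^*)$ are mutually orthogonal projections summing to the identity, so on each basis vector exactly one coordinate equals $1$: if $i^*<n$ is the least index with $k_{i^*}\ge 1$ the tuple is the standard basis vector $e_{i^*}\in\R^{n+1}$, while if $k_0=\ldots=k_{n-1}=0$ it is $e_n$. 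As $(k_0,\ldots,k_{n-1})$ ranges over $\N^n$ each of $e_0,\ldots,e_n$ is attained, so the joint eigenvalues are exactly the $n+1$ corner points of $\Delta^n$.

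The routine but error-prone step is the adjoint and weight bookkeeping in part \ref{en:jointB}; the only genuinely structural point is recognizing that the eigenvalue tuples are independent of the $\Z$-index $k_n$, which is what makes the set of joint eigenvalues finite for $q=0$ and parametrized by $\N^n$ in general.
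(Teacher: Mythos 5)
Your proposal is correct and follows essentially the same route as the paper: part \ref{en:jointA} from Lemma \ref{lemma:commut}\ref{en:commutA} (with the adjoint step the paper leaves implicit), part \ref{en:jointB} by the same diagonal formulas \eqref{eq:diagonal} with the eigenvalue tuples independent of $k_n$, and part \ref{en:jointC} via Prop.~\ref{prop:Zproj} together with exhibiting basis vectors realizing each corner point. No gaps.
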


\begin{proof}
The first statement immediately follows from Lemma \ref{lemma:commut}\ref{en:commutA}.
The second statement is proved by a direct computation, using the fact that all operators are diagonal, that is:
\begin{subequations}\label{eq:diagonal}
\begin{align}
\pi(z_iz_i^*)\ket{k_0,\ldots,k_{n-1},k_n} &=q^{2(k_0+\ldots+k_{i-1})}(1-q^{2k_i})\ket{k_0,\ldots,k_{n-1},k_n} \\
\intertext{if $0\leq i<n$ and}
\pi(z_nz_n^*)\ket{k_0,\ldots,k_{n-1},k_n} &=q^{2(k_0+\ldots+k_{n-1})}\ket{k_0,\ldots,k_{n-1},k_n} .
\end{align}
\end{subequations}

For $q=0$, from Prop.~\ref{prop:Zproj} and the fact that the eigenvalue of a projection is $0$ or $1$, we deduce that that joint eigenvalues must be vectors in the standard basis of $\R^{n+1}$, i.e.~the corner points of the standard $n$-simplex. For all $0\leq i,j\leq n$ one has
\[
\pi(z_jz_j^*)\bigl|0,\ldots,0,\stackrel{\substack{i \\ \downarrow}}{1},*,\ldots,*\bigr>
=\delta_{i,j}\bigl|0,\ldots,0,\stackrel{\substack{i \\ \downarrow}}{1},*,\ldots,*\bigr> ,
\]
which proves that all tuples of the standard basis are joint eigenvalues.
\end{proof}

\begin{rem}\label{rem:qsimp}
The closure in $\R^{n+1}$ of the set in Prop.~\ref{prop:joint}\ref{en:jointB} is called the \emph{quantized $n$-simplex} in \cite{MK22}, and we shall denote it by $\Delta^n_q$.
\end{rem}

Let $\vartheta:\mathcal{A}(S^{2n+1}_q)\to\mathcal{A}(S^{2n+1}_q)^{\mathbb{T}^{n+1}}$ be the map
\begin{equation}\label{eq:maptheta}
\vartheta(a):=\int_{\mathbb{T}^{n+1}}\alpha_{\mv{t}}(a)d\mu_t ,
\end{equation}
where $a\in\mathcal{A}(S^{2n+1}_q)$ and $\mu$ is the normalized Haar measure on $\mathbb{T}^{n+1}$.

We now show that, for $q\neq 0$, the representation $\pi$ is faithful.
The proof for $q=0$ will be given in the Sect.~\ref{sec:5} using graph C*-algebras.

\begin{prop}\label{prop:theta}
Let $q\neq 0$. Then,
\begin{enumerate}
\item\label{en:thetaA} the map $\vartheta$ in \eqref{eq:maptheta} is the identity on $\mathcal{A}(\Delta^n_q)$;

\item\label{en:thetaB} $\mathcal{A}(S^{2n+1}_q)^{\mathbb{T}^{n+1}}=\mathcal{A}(\Delta^n_q)$.
\end{enumerate}
\end{prop}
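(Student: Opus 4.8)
The plan is to prove~(i) directly, and then to derive the nontrivial inclusion in~(ii) from the spanning family of Prop.~\ref{lemma:genfam}.

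For~(i), I would first note that each generator of $\mathcal{A}(\Delta^n_q)$ is $\mathbb{T}^{n+1}$-invariant: from $\alpha_{\mv{t}}(z_i)=t_iz_i$ and $|t_i|=1$ one gets $\alpha_{\mv{t}}(z_iz_i^*)=t_i\bar t_i\,z_iz_i^*=z_iz_i^*$. Hence $\mathcal{A}(\Delta^n_q)\subseteq\mathcal{A}(S^{2n+1}_q)^{\mathbb{T}^{n+1}}$, and for any invariant $a$ one has $\vartheta(a)=\int_{\mathbb{T}^{n+1}}a\,d\mu_t=a$; in particular $\vartheta$ is the identity on $\mathcal{A}(\Delta^n_q)$. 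This already gives the inclusion $\supseteq$ in~(ii).

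For the reverse inclusion, I would use that the action~\eqref{eq:Taction} turns $\vartheta$ into the linear projection onto the degree-$\mv{0}$ part of the $\Z^{n+1}$-grading: on a homogeneous element $a$ of degree $\mv{m}$ one has $\vartheta(a)=\big(\int_{\mathbb{T}^{n+1}}t_0^{m_0}\cdots t_n^{m_n}\,d\mu_t\big)\,a=\delta_{\mv{m},\mv{0}}\,a$. Applying $\vartheta$ to a generic element expanded in the family $\{\inner{\underline{i},\underline{j}}\}$ of Prop.~\ref{lemma:genfam}, every summand is either killed or fixed, so the invariant subalgebra is spanned by those $\inner{\underline{i},\underline{j}}$ of degree $\mv{0}$. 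A direct degree count identifies these as the monomials with $i_n=0$ and $j_k=i_k$ for $k<n$, namely
\[
z_0^{i_0}\cdots z_{n-1}^{i_{n-1}}(z_{n-1}^*)^{i_{n-1}}\cdots(z_0^*)^{i_0},\qquad i_0,\dots,i_{n-1}\in\N .
\]
It then suffices to show that each such monomial belongs to $\mathcal{A}(\Delta^n_q)$.

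This last step is where the real work lies, and I would carry it out by a double induction. Writing $p_i:=z_iz_i^*$ and $c_k:=1-\sum_{j<k}p_j$, the inner claim is that $z_k^m(z_k^*)^m$ is a polynomial in $p_0,\dots,p_k$. Rewriting (rel$_1$) as $z_k^*z_k=q^2p_k+(1-q^2)c_k$ and using that $c_k$ commutes with $z_k$ (Lemma~\ref{lemma:commut}\ref{en:commutA}), I obtain the commutation rule $z_kp_k=q^{-2}\big(p_k-(1-q^2)c_k\big)z_k$; writing $z_k^m(z_k^*)^m=z_k\big(z_k^{m-1}(z_k^*)^{m-1}\big)z_k^*$ and pushing the degree-$(m-1)$ polynomial past $z_k$ leaves an outer factor $z_kz_k^*=p_k$, so the claim follows by induction on $m$. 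The outer claim, that the full nested monomial $a_k:=z_k^{i_k}\cdots(z_k^*)^{i_k}$ lies in $\mathcal{A}(\Delta^n_q)$, then follows by downward induction on $k$ (base case $a_n=1$): commuting $z_k^{i_k}$ rightward through $a_{k+1}$ via~\eqref{eq:qsphereA}--\eqref{eq:qsphereB} only produces a scalar $q$-power, giving
\[
a_k=q^{-2i_k(i_{k+1}+\cdots+i_{n-1})}\,a_{k+1}\,z_k^{i_k}(z_k^*)^{i_k},
\]
a product of two elements of $\mathcal{A}(\Delta^n_q)$ by the inductive hypothesis and the inner claim. The only delicate point is the bookkeeping of $q$-exponents in these commutations; the algebraic substance is entirely contained in (rel$_1$) together with the commutativity of $\mathcal{A}(\Delta^n_q)$ (Prop.~\ref{prop:joint}\ref{en:jointA}).
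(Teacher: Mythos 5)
Your proposal is correct and follows essentially the same route as the paper: part~(i) via invariance of the generators of $\mathcal{A}(\Delta^n_q)$, and part~(ii) by using $\vartheta$ to project a spanning-family expansion onto its degree-$\mv{0}$ terms (which forces $i_n=0$, $j_k=i_k$) and then proving by induction on $m$ that the monomials $z_k^m(z_k^*)^m$ lie in $\mathcal{A}(\Delta^n_q)$. The only differences are organizational and immaterial: you iterate the commutation rule obtained from (rel$_1$) where the paper invokes \eqref{eq:relk} for general $m$, and you reduce the nested monomial by downward induction on $k$ where the paper first reshuffles it into the ordered form $q^{(\cdots)}z_0^{j_0}(z_0^*)^{j_0}\cdots z_{n-1}^{j_{n-1}}(z_{n-1}^*)^{j_{n-1}}$.
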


\begin{proof}
By construction, $\vartheta$ is the identity on $\mathbb{T}^{n+1}$-invariant elements. Clearly
\begin{equation}\label{eq:oino}
\mathcal{A}(\Delta^n_q)\subseteq\mathcal{A}(S^{2n+1}_q)^{\mathbb{T}^{n+1}} ,
\end{equation}
since $\mathcal{A}(\Delta^n_q)$ is generated by invariant elements, hence \ref{en:thetaA}. If we show that the image of $\vartheta$ is in $\mathcal{A}(\Delta^n_q)$, this proves the inclusion opposite to \eqref{eq:oino}, and then \ref{en:thetaB}.

It is enough to show that $\vartheta(\inner{\underline{i},\underline{j}})\in\mathcal{A}(\Delta^n_q)$ for all elements in the set \eqref{eq:basis}. For $\underline{t}\in\mathbb{T}^{n+1}$,
\begin{equation}\label{eq:actmon}
\alpha_{\mv{t}}(\inner{\underline{i},\underline{j}})=t_0^{i_0-j_0}\ldots t_{n-1}^{i_{n-1}-j_{n-1}}t_n^{i_n}\inner{\underline{i},\underline{j}} ,
\end{equation}
and the integral is zero unless $i_k=j_k$ for all $0\leq k<n$ and $i_n=0$.
Thus $\vartheta(\inner{\underline{i},\underline{j}})$ is either zero or of the form
\begin{equation}\label{eq:monomialform} 
z_0^{j_0}\ldots z_{n-1}^{j_{n-1}}\big( z_0^{j_0}\ldots z_{n-1}^{j_{n-1}} \big)^* .
\end{equation}
We must show that these monomials belong to $\mathcal{A}(\Delta^n_q)$.
Using \eqref{eq:qsphereA} and \eqref{eq:qsphereB} we rewrite \eqref{eq:monomialform} in the form
\[
q^{(\ldots)}z_0^{j_0}(z_0^*)^{j_0}\ldots z_{n-1}^{j_{n-1}}(z_{n-1}^*)^{j_{n-1}}
\]
where the power of $q$ is irrelevant. It remains to show that, for all $0\leq i\leq n-1$ and all $m\in\N$, the monomial
$z_i^m(z_i^*)^m$ belongs to $\mathcal{A}(\Delta^n_q)$. We prove it by induction on $m$. It is trivially true if $m=0$.
Assume that it is true for a fixed $m\geq 0$. We have
\begin{align*}
z_i^{m+1} &(z_i^*)^{m+1}=z_i(z_i^mz_i^*)(z_i^*)^{m} \\
&=z_i\left(
q^{-2m}z_i^*z_i^m+(1-q^{-2m})(z_i)^{m-1}\left(1-\sum\nolimits_{j=0}^{i-1}z_jz_j^*\right)
\right)(z_i^*)^{m} && \text{by \eqref{eq:relk}} \\
&=\left(
q^{-2m}z_iz_i^*+(1-q^{-2m})\left(1-\sum\nolimits_{j=0}^{i-1}z_jz_j^*\right)
\right)z_i^m(z_i^*)^m && \text{by Lemma \ref{lemma:commut}\ref{en:commutA}}.
\end{align*}
In the last line, the expression inside the parentheses belongs to $\mathcal{A}(\Delta^n_q)$, $z_i^m(z_i^*)^m\in\mathcal{A}(\Delta^n_q)$ by inductive hypothesis, and then $z_i^{m+1}(z_i^*)^{m+1}\in\mathcal{A}(\Delta^n_q)$ as well.
\end{proof}

\begin{rem}\label{rem28}
Prop.~\ref{prop:theta}\ref{en:thetaB} fails for $q=0$ (and $n\geq 1$). 
To see this, notice that
\begin{equation}\label{eq:projm}
Z_0^m(Z_0^*)^m-Z_0^{m+1}(Z_0^*)^{m+1}
\end{equation}
is a projection onto the subspace of $\ell^2(\N^n\times\Z)$ spanned by vectors $\ket{\underline{k}}$ with $k_0=m$. 
Being mutually orthogonal projections, the operators in \eqref{eq:projm} are linearly independent, and so are the corresponding elements $z_0^m(z_0^*)^m-z_0^{m+1}(z_0^*)^{m+1}\in\mathcal{A}(S^{2n+1}_0)^{\mathbb{T}^{n+1}}$. Thus, $\mathcal{A}(S^{2n+1}_0)^{\mathbb{T}^{n+1}}$ is infinite-dimensional. On the other hand, $\mathcal{A}(\Delta^n_0)$ is finite-dimensional, being the linear span of the $n+1$ projections in Prop.~\ref{prop:Zproj}. Similarly,
the next Lemma \ref{lemma:faithrep}\ref{lemma:faithrepB} fails when $q=0$, since $\mathcal{A}(\Delta^n_0)$ is finite-dimensional and $\C[x_1,\ldots,x_n]$ is infinite-dimensional.
\end{rem}

\begin{lemma}\label{lemma:faithrep}
Let $q\neq 0$. Then:
\begin{enumerate}
\item\label{lemma:faithrepA}
the restriction $\pi:\mathcal{A}(\Delta^n_q)\to\mathcal{B}(\ell^2(\N^n\times\Z))$ of the representation
in Prop.~\ref{prop:rep} is faithful;

\item\label{lemma:faithrepB}
$\mathcal{A}(\Delta^n_q)$ is isomorphic to the *-algebra $\C[x_1,\ldots,x_n]$ of polynomials in $n$ real indeterminates.
\end{enumerate}
\end{lemma}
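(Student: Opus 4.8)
The plan is to build an explicit isomorphism $\Phi\colon\C[x_1,\dots,x_n]\to\mathcal{A}(\Delta^n_q)$ and to read off both statements from a single density property of the joint spectrum of the operators $\pi(z_iz_i^*)$. Since $\mathcal{A}(\Delta^n_q)$ is commutative and the elements $z_0z_0^*,\dots,z_{n-1}z_{n-1}^*$ are self-adjoint, the universal property of the polynomial $*$-algebra $\C[x_1,\dots,x_n]$ (with $x_i=x_i^*$) furnishes a $*$-homomorphism $\Phi$ determined by $\Phi(x_i)=z_{i-1}z_{i-1}^*$. It is surjective because $z_nz_n^*=1-\sum_{i=0}^{n-1}z_iz_i^*$ by \eqref{eq:qsphereE}, so the $n$ elements $z_0z_0^*,\dots,z_{n-1}z_{n-1}^*$ already generate $\mathcal{A}(\Delta^n_q)$. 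Everything then reduces to injectivity, which I would obtain by composing with $\pi$.

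First I would exploit that, by \eqref{eq:diagonal}, all the $\pi(z_iz_i^*)$ are simultaneously diagonal in the standard basis, and that their joint eigenvalue on $\ket{\underline{k}}$ does not depend on $k_n$. It is convenient to replace the generators by the tail sums $S_i:=\sum_{j\ge i}z_jz_j^*\in\mathcal{A}(\Delta^n_q)$: a telescoping computation from \eqref{eq:diagonal} gives $\pi(S_i)\ket{\underline{k}}=q^{2(k_0+\dots+k_{i-1})}\ket{\underline{k}}$, and the passage between $(z_0z_0^*,\dots,z_{n-1}z_{n-1}^*)$ and $(S_1,\dots,S_n)$ is an invertible affine change of variables, since $S_i=1-\sum_{j<i}z_jz_j^*$. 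Hence a polynomial relation among the $z_iz_i^*$ holds precisely when the corresponding relation holds among the $S_i$. Writing $a_i:=k_0+\dots+k_{i-1}$, the joint spectrum of $(\pi(S_1),\dots,\pi(S_n))$ is exactly
\[
\big\{(q^{2a_1},\dots,q^{2a_n}):0\le a_1\le a_2\le\dots\le a_n,\ a_i\in\N\big\}.
\]

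The heart of the argument is the claim that a polynomial $P\in\C[s_1,\dots,s_n]$ vanishing on all of these points must be the zero polynomial. I would prove this by induction on $n$, peeling off the last coordinate. Fixing $0\le a_1\le\dots\le a_{n-1}$, the one-variable polynomial $s_n\mapsto P(q^{2a_1},\dots,q^{2a_{n-1}},s_n)$ vanishes at the infinitely many distinct values $q^{2a_n}$ with $a_n\ge a_{n-1}$ (distinct because $0<q<1$), hence vanishes identically; its coefficients are then polynomials in $(s_1,\dots,s_{n-1})$ that vanish on the $(n-1)$-dimensional analogue of the above set, and the inductive hypothesis forces them to be zero. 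The base case $n=1$ is simply that a nonzero one-variable polynomial has finitely many roots. That the constrained index set $a_1\le\dots\le a_n$ is still rich enough to detect every polynomial relation is the step I expect to be the real content; the ordering constraint is exactly what the inductive peeling is designed to accommodate.

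With this lemma both parts follow at once. For injectivity of $\Phi$: if $\Phi(P)=0$, then $\pi(\Phi(P))=P\big(\pi(z_0z_0^*),\dots,\pi(z_{n-1}z_{n-1}^*)\big)=0$ as an operator, so $P$ vanishes at every joint eigenvalue; transporting through the affine change of variables and invoking the lemma gives $P=0$, and together with surjectivity this proves \ref{lemma:faithrepB}. For \ref{lemma:faithrepA}, a $*$-homomorphism is faithful if and only if it is injective, so it suffices to observe that every $a\in\mathcal{A}(\Delta^n_q)$ is of the form $\Phi(P)$, and that $\pi(a)=0$ yields $P=0$ by the identical computation, whence $a=\Phi(P)=0$.
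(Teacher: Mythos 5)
Your proposal is correct and takes essentially the same approach as the paper: the paper's map sends $x_i$ directly to the tail sum $\sum_{j\ge i}z_jz_j^*$ (so your affine change of variables is simply built into the definition of the homomorphism), and its key step is the identical vanishing lemma --- a polynomial vanishing on $\bigl\{(q^{2a_1},\dots,q^{2a_n}):0\le a_1\le\dots\le a_n,\ a_i\in\N\bigr\}$ must be zero --- proved by the same induction that peels off the last coordinate. Both parts are then deduced from injectivity of the composition with $\pi$ exactly as you do.
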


\begin{proof}
A *-homomorphism $f:\C[x_1,\ldots,x_n]\to\mathcal{A}(\Delta^n_q)$ is defined on generators by
\[
f(x_i):=\sum_{j=i}^nz_jz_j^* ,\qquad\text{ for all }1\leq i\leq n .
\]
It is surjective, since $z_0z_0^*=1-f(x_1)$, $z_iz_i^*=f(x_i)-f(x_{i+1})$ for
$0<i<n$, and $z_nz_n^*=f(x_n)$. We now show that the composition $\pi\circ f$ is injective,
and hence $f$ is injective and $\pi$ is injective on the image of $f$, thus concluding the proof of both points \ref{lemma:faithrepA} and \ref{lemma:faithrepB}.

If $P(x_1,\ldots,x_n)$ is a polynomial in the kernel of $\pi\circ f$, by definition of the representation it means that
\[
P(q^{2k_0},q^{2(k_0+k_1)},\ldots,q^{2(k_0+\ldots+k_{n-1})})=0 \qquad\forall\;k_0,\ldots,k_{n-1}\in\N.
\]
Thus, $P$ vanishes on the set $S_n$ of tuples of the form $(q^{m_1},\ldots,q^{m_n})$, with $m_1,\ldots,m_n\in\N$ and $0\leq m_1\leq m_2\leq\ldots\leq m_n$. We now prove by induction on $n\geq 1$ that, if a polynomial in $n$ variables vanishes on $S_n$, then it must be zero. For $n=1$, this follows from the fact that a non-zero polynomial in one variable has only finitely many zeros. For arbitrary $n\geq 2$, we can write such a polynomial $P$ as
\[
P(x_1,\ldots,x_n)=\sum_{N\geq 0}P_N(x_1,\ldots,x_{n-1}) x_n^N ,
\]
where $P_N\in\C[x_1,\ldots,x_{n-1}]$.
For every fixed $(\lambda_1,\ldots,\lambda_{n-1})\in S_{n-1}$,
\[
P(\lambda_1,\ldots,\lambda_{n-1},y)=\sum_{N\geq 0}P_N(\lambda_1,\ldots,\lambda_{n-1}) y^N .
\]
is a polynomial in $y$ with infinitely many zeroes. Hence it must be zero, which means that all coefficients 
$P_N(\lambda_1,\ldots,\lambda_{n-1})$ are zero. Thus, the polynomials $P_N$ all vanish on $S_{n-1}$.
By inductive hypothesis, $P_N=0$ for all $N$, and so $P=0$.
\end{proof}

\begin{prop}\label{prop:faithrep}
If $q\neq 0$,
the representation $\pi:\mathcal{A}(S^{2n+1}_q)\to\mathcal{B}(\ell^2(\N^n\times\Z))$ of Prop.~\ref{prop:rep} is faithful.
\end{prop}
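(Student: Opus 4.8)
The plan is to deduce faithfulness of $\pi$ on all of $\mathcal{A}(S^{2n+1}_q)$ from the faithfulness of its restriction to the invariant subalgebra, exactly in the spirit of Lemma~\ref{lemma:faithcond}\ref{en:faithcondB}. We already have the ingredients: by Prop.~\ref{prop:rep} the representation $\pi$ is covariant for the $\mathbb{T}^{n+1}$-action $\alpha$ of \eqref{eq:Taction} and the unitary representation $U$ of \eqref{eq:unitary}; by Prop.~\ref{prop:theta}\ref{en:thetaB} the invariant subalgebra $\mathcal{A}(S^{2n+1}_q)^{\mathbb{T}^{n+1}}$ coincides with $\mathcal{A}(\Delta^n_q)$; and by Lemma~\ref{lemma:faithrep}\ref{lemma:faithrepA} the restriction $\pi|_{\mathcal{A}(\Delta^n_q)}$ is faithful.

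The strategy is therefore to run the averaging argument of Lemma~\ref{lemma:faithcond}\ref{en:faithcondB} directly at the level of the $*$-algebra, bypassing any need for a C*-completion. Concretely, suppose $a\in\mathcal{A}(S^{2n+1}_q)$ is a nonzero positive element with $\pi(a)=0$; I want a contradiction. First I would apply the conditional expectation $\vartheta$ of \eqref{eq:maptheta} to $a$. The covariance relation $U_{\mv{t}}\pi(b)U_{\mv{t}}^*=\pi(\alpha_{\mv{t}}(b))$ gives, upon integrating over $\mathbb{T}^{n+1}$ against the Haar measure, the intertwining identity
\[
\pi(\vartheta(a))=\int_{\mathbb{T}^{n+1}}U_{\mv{t}}\pi(a)U_{\mv{t}}^*\,d\mu_t ,
\]
so $\pi(a)=0$ forces $\pi(\vartheta(a))=0$. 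Now $\vartheta(a)\in\mathcal{A}(S^{2n+1}_q)^{\mathbb{T}^{n+1}}=\mathcal{A}(\Delta^n_q)$ by Prop.~\ref{prop:theta}\ref{en:thetaB}, and since $\pi|_{\mathcal{A}(\Delta^n_q)}$ is faithful (Lemma~\ref{lemma:faithrep}\ref{lemma:faithrepA}) while $\vartheta(a)$ is a positive element of that subalgebra, we conclude $\vartheta(a)=0$. To finish, I need $\vartheta$ itself to be faithful on positive elements, i.e.\ $\vartheta(a)=0$ together with $a\geq 0$ should force $a=0$.

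The main obstacle is precisely that last faithfulness step for $\vartheta$: Lemma~\ref{lemma:faithcond} establishes faithfulness of the Haar-averaging conditional expectation only on a \emph{C*-algebra}, using a state attaining the norm and the continuity of $g\mapsto\omega(\alpha_g(a))$, whereas here $\vartheta$ acts on the purely algebraic $\mathcal{A}(S^{2n+1}_q)$, which a priori carries no norm. The cleanest way around this is to argue directly with the faithful representation one is trying to establish: since $\pi$ is a $*$-representation on a Hilbert space, positivity of $a$ means $\pi(a)=\pi(a)^*\geq 0$ as a bounded operator, and one can transport the averaging argument to $\mathcal{B}(\ell^2(\N^n\times\Z))$, where the map $b\mapsto\int U_{\mv{t}}bU_{\mv{t}}^*d\mu_t$ \emph{is} a faithful conditional expectation onto the fixed-point C*-algebra by Lemma~\ref{lemma:faithcond}\ref{en:faithcondA}. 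Rather than prove $\vartheta$ faithful abstractly, I would instead observe that $\pi$ is injective on $\mathcal{A}(S^{2n+1}_q)^{\mathbb{T}^{n+1}}$ and then invoke the covariance together with faithfulness of the \emph{operator-level} expectation $\widetilde{\vartheta}(b)=\int U_{\mv{t}}bU_{\mv{t}}^*d\mu_t$: from $\pi(a)=0$ we get $\widetilde\vartheta(\pi(a^*a))=\pi(\vartheta(a^*a))=0$, and since $\pi(a^*a)\geq 0$ and $\widetilde\vartheta$ is faithful, $\pi(a^*a)=0$, whence $\pi(a)=0$ already yields nothing new — so the efficient route is simply to note $\pi(a)=0\Rightarrow\pi(\vartheta(a))=0\Rightarrow\vartheta(a)=0$ and then apply faithfulness of $\vartheta$, which I would justify by the same state-and-continuity argument as Lemma~\ref{lemma:faithcond}\ref{en:faithcondA} carried out on the C*-enveloping algebra, whose existence for $q\neq 0$ is guaranteed because $\pi$ provides a bounded $*$-representation. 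This reduces the whole proposition to citing Lemma~\ref{lemma:faithcond}\ref{en:faithcondB} once the identification $A^{\mathbb{T}^{n+1}}=\mathcal{A}(\Delta^n_q)$ and the faithfulness of $\pi|_{\mathcal{A}(\Delta^n_q)}$ are in hand, both of which are already available.
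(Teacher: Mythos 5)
Your reduction is sound up to its last step: from $\pi(a)=0$, covariance and integration give $\pi(\vartheta(a^*a))=0$; then $\vartheta(a^*a)\in\mathcal{A}(\Delta^n_q)$ by Prop.~\ref{prop:theta}\ref{en:thetaB}, and Lemma~\ref{lemma:faithrep}\ref{lemma:faithrepA} forces $\vartheta(a^*a)=0$. The genuine gap is the faithfulness of $\vartheta$ on the \emph{polynomial} algebra $\mathcal{A}(S^{2n+1}_q)$, and your proposed justification for it is circular. Running the state-and-continuity argument of Lemma~\ref{lemma:faithcond}\ref{en:faithcondA} on the C*-enveloping algebra $C(S^{2n+1}_q)$ only controls elements \emph{of that C*-algebra}: from $\vartheta(a^*a)=0$ it yields $\phi_u(a^*a)=0$, i.e.\ $\phi_u(a)=0$, where $\phi_u:\mathcal{A}(S^{2n+1}_q)\to C(S^{2n+1}_q)$ is the canonical map. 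To conclude $a=0$ you need $\phi_u$ to be injective --- but injectivity of $\phi_u$ for $q\neq 0$ is precisely what the paper deduces \emph{from} Prop.~\ref{prop:faithrep} in Prop.~\ref{prop:embedded} (it is equivalent to $\mathcal{A}(S^{2n+1}_q)$ admitting some injective bounded *-representation, which is the statement being proved). Nor can the circle be broken by a cleverer choice of states: since the generators are bounded in every *-representation coming from a positive functional (the norm bound in the proof of Prop.~\ref{prop:embedded}), every state on $\mathcal{A}(S^{2n+1}_q)$ factors through $\phi_u$ via GNS, so any state-based or C*-based faithfulness argument is blind to $\ker\phi_u$. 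The most your method can establish is $\ker\pi=\ker\phi_u$; it cannot show that this kernel is zero. (A smaller issue: ``positive element'' of a bare *-algebra needs to be defined with care, but this is minor next to the circularity.)

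What is missing is a genuinely algebraic input, and this is exactly where the paper's proof takes a different route. By covariance, $\ker\pi$ is invariant under the $\mathbb{T}^{n+1}$-action, hence is a graded subspace, so it suffices to prove injectivity on each spectral subspace. Given a homogeneous element $a$ of degree $\mv{m}$ in $\ker\pi$, the generating family of Prop.~\ref{lemma:genfam} allows one to reorder each monomial and factor $a=l_0\cdots l_n\,b\,r_n\cdots r_0$, where each $l_k$, $r_k$ is an explicit power of $z_k$ or $z_k^*$ determined by $\mv{m}$ alone, and $b$ is $\mathbb{T}^{n+1}$-invariant, hence lies in $\mathcal{A}(\Delta^n_q)$ by Prop.~\ref{prop:theta}\ref{en:thetaB}. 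Since each $\pi(z_i)$ has trivial kernel and each $\pi(z_i^*)$ has dense range ($\pi(z_i^*z_i)$ is diagonal with nonzero eigenvalues), $\pi(a)=0$ forces $\pi(b)=0$, and then Lemma~\ref{lemma:faithrep}\ref{lemma:faithrepA} gives $b=0$, hence $a=0$. So your two key ingredients are indeed the right ones, and your expectation strategy is exactly how the paper later proves faithfulness at the C*-algebra level (Prop.~\ref{prop:inj}, where Lemma~\ref{lemma:faithcond}\ref{en:faithcondB} legitimately applies because everything in sight is a C*-algebra); but at the level of $\mathcal{A}(S^{2n+1}_q)$ the bridge from arbitrary elements to invariant ones must be the grading-and-factorization argument, not the faithfulness of the conditional expectation.
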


\begin{proof}
Since the vector space $\mathcal{A}(S^{2n+1}_q)$ is a direct sum of spectral subspaces of the action of $\mathbb{T}^{n+1}$, it is enough to show that $\pi$ is injective on each of these subspaces.

Let $a\in\mathcal{A}(S^{2n+1}_q)$ be an element in the kernel of $\pi$ homogeneous of degree $\mv{m}\in\Z^{n+1}$, i.e.~satisfying \eqref{eq:spectralsubspace}.
We now show that $a=0$, thus proving injectivity.
Write
\[
a=\sum_{\underline{i}\in\N^n\times\Z,\underline{j}\in\N^n:\underline{i}-(\underline{j},0)=\underline{m}}\lambda_{\underline{i},\underline{j}}\inner{\underline{i},\underline{j}}
\]
as a linear combination of the elements in \eqref{eq:basis}, where the constrain $\underline{i}-(\underline{j},0)=\underline{m}$ comes from the condition \eqref{eq:spectralsubspace}.
Observe that we can reshuffle the $z$'s in $\inner{\underline{i},\underline{j}}$
and put them in an arbitrary order, getting a power of $q$ in front. We can reshuffle the $z^*$'s as well (as long as
we don't move $z_i$ to the right of $z_i^*$, there is no problem).
Each $\inner{\underline{i},\underline{j}}$ can then be written in the form
\[
l_0l_1\ldots l_nc_{\underline{i},\underline{j}}r_n\ldots r_1r_0
\]
where, for every $0\leq k\leq n$, we set $l_k:=z_k^{m_k}$ and $r_k:=1$ if $m_k\geq 0$, and we set
$l_k:=1$ and $r_k:=(z_k^*)^{-m_k}$ if $m_k<0$. By construction, $c_{\underline{i},\underline{j}}$ is $\mathbb{T}^{n+1}$-invariant (and is the only piece depending on the labels $\underline{i},\underline{j}$). Adding up all these elements, we get
\[
a=l_0l_1\ldots l_nb\,r_n\ldots r_1r_0
\]
where $b$ is some $\mathbb{T}^{n+1}$-invariant element.
By Prop.~\ref{prop:theta}\ref{en:thetaB}, $b$ belongs to $\mathcal{A}(\Delta^n_q)$.
Now, $\pi(a)=0$ implies that $\pi(b)$ maps the range of $\pi(r_n\ldots r_1r_0)$ to the kernel of $\pi(l_0l_1\ldots l_n)$.
But each $\pi(z^*_i)$ has full range, and each $\pi(z_i)$ has trivial kernel ($\pi(z_i^*z_i)$ is diagonal with only non-zero eigenvalues). Hence, it must be $\pi(b)=0$. From Lemma \ref{lemma:faithrep}\ref{lemma:faithrepA} we deduce that $b=0$, and then $a=0$ as well.
\end{proof}

\subsection{The graph C*-algebra of a quantum sphere}\label{sec:5}

We denote by $\Sigma_n$ the graph with
$n+1$ vertices $\{v_0,v_1,\ldots,v_n\}$
and one edge $e_{i,j}$ from $v_i$ to $v_j$ for all $0\leq i\leq j\leq n$.
A picture is in Figure~\ref{fig:sphere}.
We denote by $\widetilde{\Sigma}_n$ the graph with
$n+1$ vertices $\{w_0,w_1,\ldots,w_n\}$,
one edge $f_i$ from $w_i$ to $w_{i+1}$ for all $0\leq i<n$,
and one loop $e_i$ at $w_i$ for all $0\leq i\leq n$.
A picture is in Figure~\ref{fig:sphereB}.
With an abuse of notations, we denote by $\{P,S\}$ the Cuntz--Krieger family of both graphs $\Sigma_n$ and $\widetilde{\Sigma}_n$.

\begin{figure}[t]
\begin{center}
\begin{tikzpicture}[>=stealth,node distance=2cm,
main node/.style={circle,inner sep=2pt},
freccia/.style={->,shorten >=2pt,shorten <=2pt},
ciclo/.style={out=130, in=50, loop, distance=2cm}]

\clip (-0.6,-2.7) rectangle (10.6,1.4);

\node[main node] (1) {};
\node (2) [main node,right of=1] {};
\node (3) [main node,right of=2] {};
\node (4) [main node,right of=3] {};
\node (5) [right of=4] {};
\node (6) [main node,right of=5] {};

\filldraw (1) circle (0.06) node[below left] {$v_0$};
\filldraw (2) circle (0.06);
\filldraw (3) circle (0.06);
\filldraw (4) circle (0.06);
\filldraw (6) circle (0.06) node[below right] {$v_n$};

\path[freccia] (1) edge[ciclo] (1);
\path[freccia] (2) edge[ciclo] (2);
\path[freccia] (3) edge[ciclo] (3);
\path[freccia] (4) edge[ciclo] (4);
\path[freccia] (6) edge[ciclo] (6);

\path[freccia]
	(1) edge (2)
	(2) edge (3)
	(3) edge (4)
	(4) edge[dashed]  (6)
	(1) edge[bend right] (3)
	(1) edge[bend right=40] (4)
	(2) edge[bend right] (4);

\path[white]
	(1) edge[bend right=60] coordinate (7) (6)
	(2) edge[bend right=50] coordinate (8) (6)
	(3) edge[bend right=40] coordinate (9) (6);

\path
	(1) edge[out=-60,in=180] (7)
	(2) edge[out=-50,in=180] (8)
	(3) edge[out=-40,in=180] (9);

\path[->,dashed,shorten >=2pt]
	(7) edge[out=0,in=240] (6)
	(8) edge[out=0,in=230] (6)
	(9) edge[out=0,in=220] (6);

\path[freccia,dashed] (4) edge[bend right,dashed] (6);
\end{tikzpicture}
\end{center}
\caption{The graph $\Sigma_n$.}
\label{fig:sphere}

\bigskip

\begin{center}
\begin{tikzpicture}[>=stealth,node distance=2cm,
main node/.style={circle,inner sep=2pt},
freccia/.style={->,shorten >=2pt,shorten <=2pt},
ciclo/.style={out=130, in=50, loop, distance=2cm}]

\clip (-0.9,-0.5) rectangle (10.9,1.4);

      \node[main node] (1) {};
      \node (2) [main node,right of=1] {};
      \node (3) [main node,right of=2] {};
      \node (4) [main node,right of=3] {};
      \node (5) [right of=4] {};
      \node (6) [main node,right of=5] {};

      \filldraw (1) circle (0.06) node[below left] {$w_0$};
      \filldraw (2) circle (0.06);
      \filldraw (3) circle (0.06);
      \filldraw (4) circle (0.06);
      \filldraw (6) circle (0.06) node[below right] {$w_n$};

      \path[freccia] (1) edge[ciclo] (1);
      \path[freccia] (2) edge[ciclo] (2);
      \path[freccia] (3) edge[ciclo] (3);
      \path[freccia] (4) edge[ciclo] (4);
      \path[freccia] (6) edge[ciclo] (6);

      \path[freccia] (1) edge (2) (2) edge (3) (3) edge (4);
      \path[freccia,dashed] (4) edge (6);
\end{tikzpicture}\vspace{-10pt}
\end{center}
\caption{The graph $\widetilde{\Sigma}_n$.}
\label{fig:sphereB}
\end{figure}

\begin{thm}\label{thm:nonUiso}
A (non $U(1)$-equivariant) isomorphism $\psi:C^*(\widetilde{\Sigma}_n)\to C^*(\Sigma_n)$ is given by
\begin{equation}\label{eq:psi}
\psi(P_{w_i})=P_{v_i} \;,\qquad
\psi(S_{e_i})=S_{e_{i,i}} \;,\qquad
\psi(S_{f_j})=\sum_{k=j+1}^nS_{e_{j,k}}S_{e_{j+1,k}}^* \;,
\end{equation}
for all $0\leq i\leq n$ and $0\leq j<n$. The inverse map is given on generators by
\[
\psi^{-1}(P_{v_i})=P_{w_i} \;,\qquad
\psi^{-1}(S_{e_{i,i}})=S_{e_i} \;,\qquad
\psi^{-1}(S_{e_{j,k}})=S_{f_j}S_{f_{j+1}}\ldots S_{f_{k-1}}S_{e_k} \;,
\]
for all $0\leq i\leq n$ and for all $0\leq j<k\leq n$.
\end{thm}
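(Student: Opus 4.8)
The plan is to build both $\psi$ and $\psi^{-1}$ directly from the universal property of graph C*-algebras (the defining property of $C^*(E)$), and then to check that the two maps compose to the identity by evaluating on generators. To obtain $\psi$ as a well-defined $*$-homomorphism it suffices to verify that the elements $P_{v_i}$, $S_{e_{i,i}}$ and $\sum_{k=j+1}^n S_{e_{j,k}}S_{e_{j+1,k}}^*$ assigned to $P_{w_i}$, $S_{e_i}$ and $S_{f_j}$ form a Cuntz--Krieger $\widetilde{\Sigma}_n$-family in $C^*(\Sigma_n)$; the reverse assignment is treated the same way. Throughout, the argument is just repeated application of the relations \eqref{eq:CK1p}, \eqref{eq:CK2p} and \eqref{eq:heavilyuse} in each of the two graph C*-algebras.

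For well-definedness of $\psi$ the projections $P_{v_i}$ are automatically mutually orthogonal, so the content is \eqref{eq:CK1p} and \eqref{eq:CK2p} for $\widetilde{\Sigma}_n$. Writing $T_{f_j}:=\sum_k S_{e_{j,k}}S_{e_{j+1,k}}^*$, expanding $T_{f_j}^*T_{f_j}$ and collapsing with the orthogonality $S_{e_{j,k}}^*S_{e_{j,l}}=\delta_{k,l}P_{v_k}$ from \eqref{eq:heavilyuse} leaves $\sum_{k=j+1}^n S_{e_{j+1,k}}S_{e_{j+1,k}}^*$, which equals $P_{v_{j+1}}$ by \eqref{eq:CK2p} at $v_{j+1}$; this is \eqref{eq:CK1p} for $f_j$, and \eqref{eq:CK1p} for the loops $e_i$ is immediate. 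For \eqref{eq:CK2p} at $w_i$ with $i<n$ the same collapse gives $T_{f_i}T_{f_i}^*=\sum_{k=i+1}^n S_{e_{i,k}}S_{e_{i,k}}^*$, so $S_{e_{i,i}}S_{e_{i,i}}^*+T_{f_i}T_{f_i}^*=\sum_{k=i}^n S_{e_{i,k}}S_{e_{i,k}}^*=P_{v_i}$, again by \eqref{eq:CK2p}; the sink-free vertex $w_n$ is the single-term case.

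For $\psi^{-1}$ I would set $W_{e_{j,k}}:=S_{f_j}\cdots S_{f_{k-1}}S_{e_k}$ (the path isometry of $f_j\cdots f_{k-1}e_k$) and $W_{e_{i,i}}:=S_{e_i}$, and verify that these together with $P_{w_i}$ form a Cuntz--Krieger $\Sigma_n$-family in $C^*(\widetilde{\Sigma}_n)$. Relation \eqref{eq:CK1p}, $W_{e_{j,k}}^*W_{e_{j,k}}=P_{w_k}$, follows by telescoping $S_{f_l}^*S_{f_l}=P_{w_{l+1}}=P_{s(f_{l+1})}$ down the path (and makes each $W_{e_{j,k}}$ a partial isometry automatically). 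The crux is \eqref{eq:CK2p} at $v_i$, i.e.~$X_i:=\sum_{k=i}^n W_{e_{i,k}}W_{e_{i,k}}^*=P_{w_i}$. Factoring $S_{f_i}(\,\cdot\,)S_{f_i}^*$ out of the terms with $k>i$ yields the recursion $X_i=S_{e_i}S_{e_i}^*+S_{f_i}X_{i+1}S_{f_i}^*$, and downward induction from the base case $X_n=S_{e_n}S_{e_n}^*=P_{w_n}$ finishes the claim, since $S_{f_i}P_{w_{i+1}}S_{f_i}^*=S_{f_i}S_{f_i}^*$ and $S_{e_i}S_{e_i}^*+S_{f_i}S_{f_i}^*=P_{w_i}$ by \eqref{eq:CK2p} in $\widetilde{\Sigma}_n$.

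Finally I would check $\psi^{-1}\circ\psi=\id$ and $\psi\circ\psi^{-1}=\id$ on generators. The only nontrivial verifications are $\sum_{k=j+1}^n W_{e_{j,k}}W_{e_{j+1,k}}^*=S_{f_j}$, which reduces to $S_{f_j}X_{j+1}=S_{f_j}P_{w_{j+1}}=S_{f_j}$ using the already-computed $X_{j+1}$, and $\psi(S_{f_j})\cdots\psi(S_{f_{k-1}})\,S_{e_{k,k}}=S_{e_{j,k}}$. The latter is a telescoping product: multiplying two consecutive factors and applying $S_{e_{l+1,m}}^*S_{e_{l+1,m'}}=\delta_{m,m'}P_{v_m}$ contracts $\psi(S_{f_l})\psi(S_{f_{l+1}})$ to $\sum_{m>l+1}S_{e_{l,m}}S_{e_{l+2,m}}^*$, and iterating leaves $\sum_{m\geq k}S_{e_{j,m}}S_{e_{k,m}}^*$, which the last factor $S_{e_{k,k}}$ collapses to $S_{e_{j,k}}$. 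I expect the main obstacle to be exactly this index bookkeeping in the telescoping sums---keeping the summation ranges and the surviving Kronecker-delta terms straight---rather than any conceptual difficulty, since every collapse is an instance of the standard relations \eqref{eq:heavilyuse}.
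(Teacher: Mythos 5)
Your proposal is correct and follows essentially the same route as the paper: both construct $\psi$ and $\psi^{-1}$ via the universal property by checking that the assigned elements form Cuntz--Krieger families, using exactly the same collapsing computations with \eqref{eq:CK1p}, \eqref{eq:CK2p} and \eqref{eq:heavilyuse} (your $T_{f_j}^*T_{f_j}$ computation is the paper's \eqref{eq:fromabove}). The only difference is that the paper declares the well-definedness of $\psi^{-1}$ and the composition checks ``analogous and omitted,'' whereas you carry them out explicitly (the downward induction $X_i=S_{e_i}S_{e_i}^*+S_{f_i}X_{i+1}S_{f_i}^*$ and the telescoping products), and these details are correct.
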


\begin{proof}
Using the Cuntz-Krieger relations and \eqref{eq:heavilyuse} one easily checks that $\psi$ and $\psi^{-1}$ are well-defined and one the inverse of the other.
For example, let us verify that $\psi$ is well-defined, i.e.~that the elements in \eqref{eq:psi} form a Cuntz-Krieger $\widetilde{\Sigma}_n$-family.
For all $0\leq i\leq n$:
\[
\psi(S_{e_i})^*\psi(S_{e_i})=S_{e_{i,i}}^*S_{e_{i,i}}=P_{v_i}=\psi(P_{w_i}) .
\]
For all $0\leq j<n$:
\begin{align}
\psi(S_{f_j})^*\psi(S_{f_j})
&=\sum_{k,k'=j+1}^nS_{e_{j+1,k}}S_{e_{j,k}}^*S_{e_{j,k'}}S_{e_{j+1,k'}}^*  \notag \\
&=\sum_{k=j+1}^nS_{e_{j+1,k}}P_{v_k}S_{e_{j+1,k}}^*
=\sum_{k=j+1}^nS_{e_{j+1,k}}S_{e_{j+1,k}}^*=P_{v_{j+1}}=\psi(P_{w_{j+1}}) \label{eq:fromabove}
\end{align}
and
\begin{multline*}
\psi(S_{e_j})\psi(S_{e_j})^*+\psi(S_{f_j})\psi(S_{f_j})^*
=S_{e_{j,j}}S_{e_{j,j}}^*+\sum_{k,k'=j+1}^nS_{e_{j,k'}}S_{e_{j+1,k'}}^*S_{e_{j+1,k}}S_{e_{j,k}}^* \\
=S_{e_{j,j}}S_{e_{j,j}}^*+\sum_{k=j+1}^nS_{e_{j,k}}P_{v_k}S_{e_{j,k}}^*=\sum_{k=j}^nS_{e_{j,k}}S_{e_{j,k}}^*=P_{v_j}=\psi(P_{w_j}) .
\end{multline*}
Finally
\[
\psi(S_{e_n})\psi(S_{e_n})^*=S_{e_{n,n}}S_{e_{n,n}}^*=P_{v_n}=\psi(P_{w_n}) .
\]
Thus \eqref{eq:CK1p} and \eqref{eq:CK2p} for the graph $\widetilde{\Sigma}_n$ are satisfied.
Clearly the elements $\psi(P_{w_i})$ are orthogonal projections and $\psi(S_{e_i})$ is a partial isometry.
From \eqref{eq:fromabove} we get
\[
\psi(S_{f_j})\psi(S_{f_j})^*\psi(S_{f_j})=\psi(S_{f_j})\psi(P_{w_{j+1}})=\sum_{k=j+1}^nS_{e_{j,k}}S_{e_{j+1,k}}^*P_{v_{j+1}}=
\psi(S_{f_j}) ,
\]
so that $\psi(S_{f_j})$ is a partial isometry as well. This proves that $\psi$ is well-defined.

The proof that $\psi^{-1}$ is well-defined and is the inverse of $\psi$ is analogous and it is  omitted.
\end{proof}

\begin{rem}
Since all the formulas in Theorem~\ref{thm:nonUiso} are purely algebraic, the pair $(\psi,\psi^{-1})$ restricts to an isomorphism $L_{\C}(\widetilde{\Sigma}_n)\cong L_{\C}(\Sigma_n)$ between Leavitt path algebras.
\end{rem}

We now pass to quantum spheres.

\begin{prop}[The universal C*-algebra of a quantum sphere]\label{prop:embedded}
For all $0\leq q<1$ the universal C*-algebra of $\mathcal{A}(S^{2n+1}_q)$ exists and will be denoted by $C(S^{2n+1}_q)$.
If $q\neq 0$, the canonical \mbox{*-homomorphism} from $\mathcal{A}(S^{2n+1}_q)$ to its universal C*-algebra $C(S^{2n+1}_q)$ is injective.
\end{prop}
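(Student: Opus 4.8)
The plan is to prove existence of the universal C*-algebra for all $0 \leq q < 1$, and then injectivity of $\phi_u$ in the case $q \neq 0$. For existence, I would invoke the criterion established earlier: the universal C*-algebra of a *-algebra $A$ exists if and only if the universal C*-seminorm $\|a\|_u$ is finite for every $a \in A$. By Proposition~\ref{lemma:genfam}, every element of $\mathcal{A}(S^{2n+1}_q)$ is a finite linear combination of the generating monomials $\inner{\underline{i},\underline{j}}$, so by subadditivity it suffices to bound $\|\rho(z_i)\|$ uniformly over all bounded *-representations $\rho$ and all $i$. The relation \eqref{eq:qsphereE}, namely $\sum_{j} z_j z_j^* = 1$, is the key: in any bounded *-representation each $\rho(z_j z_j^*)$ is a positive operator and their sum is the identity, forcing $0 \leq \rho(z_j z_j^*) \leq 1$ and hence $\|\rho(z_j)\|^2 = \|\rho(z_j z_j^*)\| \leq 1$ for every $j$. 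Thus $\|z_j\|_u \leq 1$, every generator has finite universal seminorm, and finiteness propagates to all of $\mathcal{A}(S^{2n+1}_q)$. This settles existence.

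For injectivity when $q \neq 0$, I would appeal to the criterion that $\phi_u$ is injective if and only if $\mathcal{A}(S^{2n+1}_q)$ admits an injective (equivalently, faithful) bounded *-representation. But this is exactly the content of Proposition~\ref{prop:faithrep}, which asserts that the representation $\pi$ of Proposition~\ref{prop:rep} on $\ell^2(\N^n \times \Z)$ is faithful for $q \neq 0$. So the second claim follows immediately by combining that faithfulness result with the abstract equivalence.

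The main obstacle is essentially nonexistent at this stage, because the heavy lifting has already been done upstream: the finiteness of the universal seminorm reduces to the single normalization relation \eqref{eq:qsphereE}, and the injectivity is handed to us by the faithful representation constructed earlier. The only care needed is to note that faithfulness of $\pi$ was proven (via the conditional expectation $\vartheta$ and Lemma~\ref{lemma:faithcond}\ref{en:faithcondB}, together with Lemma~\ref{lemma:faithrep}) only for $q \neq 0$, which is precisely why the injectivity claim in this proposition is restricted to $q \neq 0$; the case $q = 0$ is deferred to Section~\ref{sec:5} where the graph C*-algebra machinery supplies the analogous faithfulness statement.
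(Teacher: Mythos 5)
Your proposal is correct and follows essentially the same route as the paper: existence via the finiteness of the universal C*-seminorm, deduced from the relation $\sum_j z_jz_j^*=1$ bounding $\|\rho(z_j)\|\leq 1$ in every bounded *-representation, and injectivity for $q\neq 0$ from the faithful representation of Prop.~\ref{prop:faithrep}. Your extra appeal to Prop.~\ref{lemma:genfam} is harmless but unnecessary (mere generation by the $z_i$ already propagates finiteness of the seminorm), and the paper phrases the injectivity step as ``the universal C*-seminorm is a norm,'' which is the same argument you cite in its abstract form.
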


\begin{proof}
Because of \eqref{eq:qsphereE}, in any bounded *-representation the norm of the $z$'s is bounded by one, and then the norm of any element $a\in\mathcal{A}(S^{2n+1}_q)$ is bounded by a constant $K_a$ which is independent of the representation.
From this observation, it follows that the universal \mbox{C*-seminorm} is well-defined on $\mathcal{A}(S^{2n+1}_q)$ and the C*-enveloping algebra exists.
If $q\neq 0$, since $\mathcal{A}(S^{2n+1}_q)$ has a faithful representation (Prop.~\ref{prop:faithrep}), the universal C*-seminorm is a norm, hence injectivity of the map $\mathcal{A}(S^{2n+1}_q)\to C(S^{2n+1}_q)$.
\end{proof}

We now prove that the canonical \mbox{*-homomorphism} $\mathcal{A}(S^{2n+1}_q)\to C(S^{2n+1}_q)$ is injective
for $q=0$ as well, cf.~the next theorem.

\begin{thm}\label{thm:graphq0}
~
\begin{enumerate}
\item
The canonical *-homomorphism from $\mathcal{A}(S^{2n+1}_0)$ to its C*-enveloping algebra $C(S^{2n+1}_0)$ is injective.

\item
A $U(1)$-equivariant isomorphism $\varphi:L_{\C}(\Sigma_n)\to\mathcal{A}(S^{2n+1}_0)$ is given by
\begin{equation}\label{eq:CKa}
\varphi(P_{v_i})=z_iz_i^* \;,\qquad
\varphi(S_{e_{i,j}})=z_iz_jz_j^* ,
\end{equation}
for all $0\leq i\leq j\leq n$. Its inverse is given on generators by
\[
\varphi^{-1}(z_i)=\sum\nolimits_{j=i}^nS_{e_{i,j}} ,
\]
for all $0\leq i\leq n$.

\item
The map $\varphi$ extends to an isomorphism of C*-algebras $\widetilde{\varphi}:C^*(\Sigma_n)\to C(S^{2n+1}_0)$.

\item\label{en:qzeroB}
The representation $\pi$ of Prop.~\ref{prop:rep} extends to a faithful representation of $C(S^{2n+1}_0)$.
\end{enumerate}
\end{thm}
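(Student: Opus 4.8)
The plan is to prove the four items in the order (2), (3), (1), (4), deriving the C*-level statements from the algebraic isomorphism (2) by combining the three universal properties at play (Leavitt, graph C*, and C*-enveloping) with the Gauge-Invariant Uniqueness Theorem~\ref{thm:gut}. For (2), I would first check that $Q_{v_i}:=z_iz_i^*$ and $T_{e_{i,j}}:=z_iz_jz_j^*$ form a Cuntz--Krieger $\Sigma_n$-family in $\mathcal{A}(S^{2n+1}_0)$. Prop.~\ref{prop:Zproj} already supplies (CK0) and the fact that the $z_i$ are partial isometries; the remaining relations reduce to short computations with \eqref{eq:qspherep}. For instance (CK1) follows from $z_i^*z_k=\delta_{ik}\sum_{m\ge i}z_mz_m^*$ together with the orthogonality of the $z_mz_m^*$, giving $T_{e_{i,j}}^*T_{e_{k,l}}=\delta_{ik}\delta_{jl}Q_{v_j}$, while (CK2) follows from \eqref{eq:qsphereCp} and $z_iz_i^*z_i=z_i$. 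The universal property of $L_{\C}(\Sigma_n)$ then yields $\varphi$. I would verify the stated $\varphi^{-1}$ by checking that $w_i:=\sum_{j\ge i}S_{e_{i,j}}$ satisfies \eqref{eq:qspherep}: \eqref{eq:qsphereBp} and \eqref{eq:qsphereCp} from (CK1)–(CK2), \eqref{eq:qsphereAp} from the fact that $S_{e_{i,j}}S_{e_{k,l}}\neq 0$ forces $j=k$, and \eqref{eq:qsphereEp} from $\sum_i P_{v_i}=1$; then confirm $\varphi\circ\varphi^{-1}=\id$ and $\varphi^{-1}\circ\varphi=\id$ on generators. Equivariance is immediate since $\varphi$ sends the degree-$1$ generators $S_{e_{i,j}}$ to the net-degree-$1$ elements $z_iz_jz_j^*$.

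For (3), I would compose $\varphi$ with the canonical map $\phi_u$ to obtain a Cuntz--Krieger $\Sigma_n$-family $\{\phi_u(Q_{v_i}),\phi_u(T_{e_{i,j}})\}$ inside the C*-algebra $C(S^{2n+1}_0)$; the universal property of the graph C*-algebra produces a $*$-homomorphism $\widetilde\varphi:C^*(\Sigma_n)\to C(S^{2n+1}_0)$ extending $\phi_u\circ\varphi$. Injectivity of $\widetilde\varphi$ is where Theorem~\ref{thm:gut} enters. Its hypotheses hold: the diagonal $U(1)$-action on $\mathcal{A}(S^{2n+1}_0)$ extends by Prop.~\ref{prop:contact} to a continuous action on $C(S^{2n+1}_0)$, since $\widetilde\alpha_u(\phi_u(z_i))=u\,\phi_u(z_i)$ is continuous in $u$; and $\widetilde\varphi$ is $U(1)$-equivariant because it is so on generators. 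The positivity condition $\widetilde\varphi(P_{v_i})=\phi_u(z_iz_i^*)\neq 0$ is checked by factoring the representation $\pi$ of Prop.~\ref{prop:rep} as $\pi=\widetilde\pi\circ\phi_u$ and noting $\widetilde\pi(\phi_u(z_iz_i^*))=\pi(z_iz_i^*)$, a nonzero projection. Thus $\widetilde\varphi$ is injective; it is surjective because a $*$-homomorphism of C*-algebras has closed range and the image contains the dense subalgebra $\phi_u(\mathcal{A}(S^{2n+1}_0))$. Hence $\widetilde\varphi$ is an isomorphism.

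Item (1) is then formal: the square relating $\varphi$, the inclusion $\iota:L_{\C}(\Sigma_n)\hookrightarrow C^*(\Sigma_n)$, $\phi_u$, and $\widetilde\varphi$ commutes, so $\phi_u=\widetilde\varphi\circ\iota\circ\varphi^{-1}$ is a composition of injective maps ($\iota$ is injective, as recalled before Theorem~\ref{thm:gut}), hence injective. For (4), since $\widetilde\varphi$ is an isomorphism, $\widetilde\pi$ is faithful if and only if $\rho:=\widetilde\pi\circ\widetilde\varphi:C^*(\Sigma_n)\to\mathcal{B}(\ell^2(\N^n\times\Z))$ is. I would apply Theorem~\ref{thm:gut} once more to $\rho$, but with the codomain taken to be the image C*-algebra $A:=\rho(C^*(\Sigma_n))$: then $A$ carries a continuous $U(1)$-action, namely the push-forward of the gauge action along the equivariant surjection $\rho$ (which coincides on $A$ with the restriction of $\mathrm{Ad}_{U}$ by covariance of $\pi$), and $\rho$ is equivariant with $\rho(P_{v_i})=\pi(z_iz_i^*)\neq 0$. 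Thus $\rho$, and hence $\widetilde\pi$, is injective.

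The genuinely delicate point is (4): the ambient algebra $\mathcal{B}(\ell^2(\N^n\times\Z))$ does not a priori carry a continuous $U(1)$-action, since the adjoint action $\mathrm{Ad}_{U}$ need not be strongly continuous (as warned after Lemma~\ref{lemma:sc}), so Theorem~\ref{thm:gut} cannot be applied to it directly. The remedy is to pass to the image $A$ and transport the continuous gauge action through the equivariant surjection $\rho$; the care lies in checking that this push-forward preserves $A$, agrees there with $\mathrm{Ad}_{U}$, and is continuous. The computations behind (2) are lengthy but routine; the main conceptual work is the bookkeeping of the three universal properties and the verification of the equivariance and nonvanishing hypotheses of the Gauge-Invariant Uniqueness Theorem.
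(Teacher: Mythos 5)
Your proposal is correct, and it rests on the same two pillars as the paper's proof: verifying that $z_iz_i^*$, $z_iz_jz_j^*$ form a Cuntz--Krieger $\Sigma_n$-family, and then combining the Gauge-Invariant Uniqueness Theorem (for injectivity of $\widetilde\varphi$) with the closed-range-plus-density argument (for surjectivity). But your decomposition differs in two instructive ways. First, for item (2) you establish the algebraic isomorphism \emph{before} touching the C*-level, by checking that $w_i:=\sum_{j\geq i}S_{e_{i,j}}$ satisfies the relations \eqref{eq:qspherep} and verifying mutual inverseness on generators; the paper instead gets injectivity of $\varphi$ as a by-product of injectivity of $\widetilde\varphi$ (via Theorem~\ref{thm:gut} and the inclusion $L_{\C}(\Sigma_n)\subseteq C^*(\Sigma_n)$) and only checks surjectivity directly, so the stated formula for $\varphi^{-1}$ is never verified to be well-defined --- it emerges a posteriori. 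Your route is more self-contained at the algebraic level (no analytic input for (2)), at the cost of one extra relation-check. Second, and more significantly, your treatment of item (4) is more careful than the paper's: the paper applies Theorem~\ref{thm:gut} to $\widetilde\pi\circ\widetilde\varphi$ with codomain implicitly $\mathcal{B}(\ell^2(\N^n\times\Z))$, which does not obviously satisfy the theorem's hypothesis of carrying a continuous $U(1)$-action --- precisely the pitfall the paper itself warns about after Lemma~\ref{lemma:sc}. Your repair (pass to the image C*-algebra, push the gauge action forward along the covariant surjection, note it agrees with $\mathrm{Ad}_U$ there and is strongly continuous, hence continuous by Lemma~\ref{lemma:cGa}) is exactly what is needed to make that step airtight. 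Relatedly, you invoke Prop.~\ref{prop:contact} to justify continuity of the $U(1)$-action on $C(S^{2n+1}_0)$ inside the proof of (3), whereas the paper only establishes this afterwards (Prop.~\ref{prop:strongly}); your ordering removes that forward reference. One small point to flesh out: Prop.~\ref{prop:contact} requires continuity of $u\mapsto\widetilde\alpha_u(a)$ for \emph{all} $a\in\phi_u(\mathcal{A}(S^{2n+1}_0))$, not just for the generators, but this follows by linearity and multiplicativity exactly as in the paper's proof of Prop.~\ref{prop:strongly}.
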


\begin{proof}
First, we show that the elements in \eqref{eq:CKa} satisfy the relations (\ref{eq:CK0}-\ref{eq:CK4}) for the graph $\Sigma_n$, so that the maps $\varphi$ and $\widetilde{\varphi}$ are well-defined. The relation \eqref{eq:CK0} follows from Prop.~\ref{prop:Zproj}. We will use repeatedly (and tacitly) Prop.~\ref{prop:Zproj}.

Let $i\leq j$ and $k\leq l$. If $i\neq k$:
\[
\varphi(S_{e_{i,j}})^*\varphi(S_{e_{k,l}})=(z_jz_j^*)(z_i^*z_k)(z_lz_l^*)\stackrel{\eqref{eq:qsphereBp}}{=}0 .
\]
On the other hand, if $i=k$ we get:
\begin{multline*}
\varphi(S_{e_{i,j}})^*\varphi(S_{e_{i,l}})=(z_jz_j^*)(z_i^*z_i)(z_lz_l^*)\stackrel{\eqref{eq:qsphereCp}}{=}\sum\nolimits_{s\geq i}(z_jz_j^*)(z_sz_s^*)(z_lz_l^*) \\
=\sum\nolimits_{s\geq i}\delta_{j,s}\delta_{s,l}z_sz_s^*=\delta_{j,l}\varphi(P_{v_j}) .
\end{multline*}
This proves \eqref{eq:CK1}. Next, for all $i\leq j$:
\begin{multline*}
\sum\nolimits_{j\geq i}\varphi(S_{e_{i,j}})\varphi(S_{e_{i,j}})^*=\sum\nolimits_{j\geq i}z_i(z_jz_j^*)^2z_i^*=\sum\nolimits_{j\geq i}z_iz_jz_j^*z_i^* \\
\stackrel{\eqref{eq:qsphereAp}}{=}\sum\nolimits_{j=0}^nz_iz_jz_j^*z_i^*\stackrel{\eqref{eq:qsphereEp}}{=}z_iz_i^*=\varphi(P_{v_i}) .
\end{multline*}
This proves \eqref{eq:CK2}. Finally,
\[
\varphi(P_{v_i})\varphi(S_{e_{i,j}})=z_i(z_i^*z_i)z_jz_j^*\stackrel{\eqref{eq:qsphereCp}}{=}z_i\sum\nolimits_{s\geq i}(z_sz_s^*)(z_jz_j^*)=z_i(z_jz_j^*)=\varphi(S_{e_{i,j}}) .
\]
In a similar way one proves that $\varphi(S_{e_{i,j}})\varphi(P_{v_j})=\varphi(S_{e_{i,j}})$, which proves \eqref{eq:CK3}. By adjunction one obtains \eqref{eq:CK4}.

This proves that the *-homomorphisms $\varphi$ and $\widetilde{\varphi}$ are well-defined, and we have a commutative diagram
\[
\begin{tikzpicture}[>=To,xscale=3,yscale=2]

\node (a) at (0,1) {$C^*(\Sigma_n)$};
\node (b) at (1,1) {$C(S^{2n+1}_0)$};
\node (c) at (0,0) {$L_{\C}(\Sigma_n)$};
\node (d) at (1,0) {$\mathcal{A}(S^{2n+1}_0)$};

\draw[font=\footnotesize,->]
	(c) edge node[above] {$\varphi$} (d)
	(d) edge node[right] {$\iota$} (b)
	(a) edge node[above] {$\widetilde{\varphi}$} (b)
	(c) edge[right hook->] (a);

\end{tikzpicture}
\]
where the left vertical arrow is the inclusion of $L_{\C}(\Sigma_n)$ into $C^*(\Sigma_n)$ and $\iota$ is the canonical \mbox{*-homomorphism} from $\mathcal{A}(S^{2n+1}_0)$ to its C*-enveloping algebra.

Let $\pi$ be the representation in Prop.~\ref{prop:rep}. By universality of the C*-algebra, there exists a representation $\widetilde{\pi}$ of $C(S^{2n+1}_0)$ on the same Hilbert space such that $\widetilde{\pi}\circ\iota=\pi$. Since
$\pi(z_iz_i^*)=Z_iZ_i^*\neq 0$, one also has $\widetilde{\varphi}(P_{v_i})=\iota(z_iz_i^*)\neq 0$. It follows from Theorem~\ref{thm:gut} that $\widetilde{\varphi}$ is injective, and then $\varphi$ is injective, and $\iota$ is injective on the image of $\varphi$.

From \eqref{eq:qsphereAp} we get
\[
\varphi\Big(\sum_{j=i}^nS_{e_{i,j}}\Big)=
z_i\sum_{j=i}^nz_jz_j^*=
z_i\sum_{j=0}^nz_jz_j^*=z_i ,
\]
proving that $\varphi$ is surjective, hence an isomorphism $L_{\C}(\Sigma_n)\cong\mathcal{A}(S^{2n+1}_0)$, and $\iota$ is injective.
It also follows that the image of $\widetilde{\varphi}:C^*(\Sigma_n)\to C(S^{2n+1}_0)$ contains $\iota\big(\mathcal{A}(S^{2n+1}_0)\big)$ which is dense in $C(S^{2n+1}_0)$. But *-homomorphisms of C*-algebras have closed image, hence $\widetilde\varphi$ is surjective.

Finally, $\widetilde{\pi}\circ\widetilde{\varphi}$ is a $U(1)$-covariant representation of $C^*(\Sigma_n)$ that maps each $P_{v_i}$ to a non-zero operator. By Theorem~\ref{thm:gut} this representation is faithful, and then $\widetilde{\pi}$ is faithful as well.
\end{proof}

From now on, we will think of $\mathcal{A}(S^{2n+1}_0)$ as a dense *-subalgebra of $C(S^{2n+1}_0)$ and omit $\iota$,
and denote by the same symbol $\pi$ both the representation in Prop.~\ref{prop:rep} and its extension to $C(S^{2n+1}_0)$.

\begin{prop}\label{prop:strongly}
For all $0\leq q<1$, the actions of $\mathbb{T}^{n+1}$ and $U(1)$ on $\mathcal{A}(S^{2n+1}_q)$ extend to strongly continuous actions on $C(S^{2n+1}_q)$.
\end{prop}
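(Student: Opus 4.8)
The plan is to obtain both extended actions and their strong continuity as a direct application of Proposition \ref{prop:contact}. Both $\mathbb{T}^{n+1}$ and $U(1)$ are compact (hence locally compact) Hausdorff groups, and the actions $\alpha$ on $\mathcal{A}(S^{2n+1}_q)$ are already in hand, defined in \eqref{eq:Taction} and via the diagonal embedding $U(1)\to\mathbb{T}^{n+1}$. Since $C(S^{2n+1}_q)$ exists by Proposition \ref{prop:embedded}, Proposition \ref{prop:contact} immediately furnishes a unique extension $\widetilde{\alpha}$ on $C(S^{2n+1}_q)$ intertwining the canonical map $\phi_u$. What remains is to verify the continuity hypothesis of Proposition \ref{prop:contact}: that for each $a$ in the dense subalgebra $\phi_u(\mathcal{A}(S^{2n+1}_q))$ the orbit map $\mv{t}\mapsto\widetilde{\alpha}_{\mv{t}}(a)$ is norm-continuous.

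The key observation is that this continuity is essentially trivial, because it holds term-by-term on the spectral decomposition. Every $a\in\mathcal{A}(S^{2n+1}_q)$ is a \emph{finite} linear combination of elements homogeneous for the $\mathbb{T}^{n+1}$-grading \eqref{eq:spectralsubspace}. For $a$ homogeneous of degree $\mv{m}=(m_0,\ldots,m_n)$, the intertwining relation of Proposition \ref{prop:contact} gives
\[
\widetilde{\alpha}_{\mv{t}}\big(\phi_u(a)\big)=\phi_u\big(\alpha_{\mv{t}}(a)\big)=t_0^{m_0}\cdots t_n^{m_n}\,\phi_u(a) ,
\]
so the orbit map is the product of the continuous scalar function $\mv{t}\mapsto t_0^{m_0}\cdots t_n^{m_n}$ on $\mathbb{T}^{n+1}$ with the fixed vector $\phi_u(a)$, and is therefore norm-continuous. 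A general $a$ is a finite sum of such homogeneous pieces, and a finite sum of continuous maps into a normed space is continuous. This establishes the hypothesis of Proposition \ref{prop:contact}, yielding strong continuity of the extended $\mathbb{T}^{n+1}$-action.

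For the $U(1)$-action there is nothing genuinely new: it is the composite of $\alpha$ with the diagonal embedding, so by uniqueness its extension is $\widetilde{\alpha}$ precomposed with that continuous homomorphism, and strong continuity of the orbit maps follows from the $\mathbb{T}^{n+1}$ case (equivalently, a homogeneous element is scaled by $u^{m_0+\cdots+m_n}$). I expect no real obstacle; the only point worth emphasizing is that the whole argument is uniform in $0\le q<1$, since it relies solely on the existence of the universal C*-algebra and on the finiteness of the spectral decomposition of each element of $\mathcal{A}(S^{2n+1}_q)$, and never on injectivity of $\phi_u$.
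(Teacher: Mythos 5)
Your proof is correct and follows essentially the same route as the paper: both reduce to Proposition \ref{prop:contact} by checking norm-continuity of the orbit maps on a spanning set where $\alpha_{\mv{t}}$ acts by a scalar character (the paper uses the monomials \eqref{eq:basis} via \eqref{eq:actmon}, you use homogeneous elements of the grading \eqref{eq:spectralsubspace}, which amounts to the same thing), then extend by linearity and treat $U(1)$ through the diagonal embedding. Your closing remark that the argument never needs injectivity of $\phi_u$ is a correct and worthwhile observation.
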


\begin{proof}
The $U(1)$-action is the composition of the action $\alpha$ of $\mathbb{T}^{n+1}$ on $\mathcal{A}(S^{2n+1}_q)$ with the diagonal embedding $U(1)\to\mathbb{T}^{n+1}$, hence it is enough to prove the claim for the action $\alpha$ in \eqref{eq:Taction}.

If $a\in\mathcal{A}(S^{2n+1}_q)$ is a monomial as in \eqref{eq:basis}, from \eqref{eq:actmon} it follows that the map
\[
f_a:\mathbb{T}^{n+1}\to\mathcal{A}(S^{2n+1}_q) , \qquad t\mapsto\alpha_{\mv{t}}(a),
\] 
is continuous. By linearity, the map $f_a$ is then continuous for all $a\in\mathcal{A}(S^{2n+1}_q)$.
From Prop.~\ref{prop:contact} it follows that $\alpha$ extends to a strongly continuous action on $C(S^{2n+1}_q)$.
\end{proof}

\subsection{The quantum sphere with positive parameter}\label{sec:6}
In this section, $q\neq 0$.
Let $Z_0,\ldots,Z_n$ be the bounded operators on $\ell^2(\N^n\times\Z)$ in Prop.~\ref{prop:rep}
and notice that, by Theorem~\ref{thm:graphq0}\ref{en:qzeroB} they generate a C*-subalgebra of $\mathcal{B}(\ell^2(\N^n\times\Z))$ that is isomorphic
to $C(S^{2n+1}_0)$, and that will be identified with $C(S^{2n+1}_0)$ in this section.

By the universal property of C*-enveloping algebras, the representation in Prop.~\ref{prop:rep} extends to a representation $\pi:C(S^{2n+1}_q)\to\mathcal{B}(\ell^2(\N^n\times\Z))$, that we denote by the same symbol. We will prove first that
the image of this representation is $C(S^{2n+1}_0)$, and then that it is injective, thus proving that $\pi$ induces an isomorphism $C(S^{2n+1}_q)\cong C(S^{2n+1}_0)$.

\begin{lemma}\label{lemma:sub}
For all $0<q<1$, one has $\pi\big(C(S^{2n+1}_q)\big)\subseteq C(S^{2n+1}_0)$.
\end{lemma}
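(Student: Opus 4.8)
The plan is to reduce Lemma~\ref{lemma:sub} to the statement that each of the $n+1$ operators $\pi(z_0),\dots,\pi(z_n)$ already lies in $C(S^{2n+1}_0)$. This suffices: since $\mathcal A(S^{2n+1}_q)$ is generated by $z_0,\dots,z_n$ and is dense in $C(S^{2n+1}_q)$, and since $\pi$ is a $*$-homomorphism, the image $\pi\big(C(S^{2n+1}_q)\big)$ is the C*-subalgebra of $\mathcal B(\ell^2(\N^n\times\Z))$ generated by $\pi(z_0),\dots,\pi(z_n)$; as $C(S^{2n+1}_0)=C^*(Z_0,\dots,Z_n)$ (Theorem~\ref{thm:graphq0}) is itself a C*-algebra, it will contain this whole subalgebra as soon as it contains each generator. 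I work throughout in the standard basis and compare the $q$-formulas with the $q=0$ formulas of Prop.~\ref{prop:rep}.

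Next I assemble two families of building blocks living in $C(S^{2n+1}_0)$. First, for each $i$ the operator $Z_i^m(Z_i^*)^m$ is the orthogonal projection onto $\{k_0=\dots=k_{i-1}=0,\ k_i\ge m\}$, so the differences $\Pi^{(i)}_m:=Z_i^m(Z_i^*)^m-Z_i^{m+1}(Z_i^*)^{m+1}$ are the projections onto $\{k_0=\dots=k_{i-1}=0,\ k_i=m\}$ (for $i=0$ these are the projections of Remark~\ref{rem28}). Because the coefficients $1-\sqrt{1-q^{2(m+1)}}$ tend to $0$, the series $D_i:=Z_i^*Z_i-\sum_{m\ge0}\big(1-\sqrt{1-q^{2(m+1)}}\big)\Pi^{(i)}_m$ converges in norm, defining an element of $C(S^{2n+1}_0)$ that multiplies $\ket{k}$ by $\sqrt{1-q^{2(k_i+1)}}$ on $\{k_0=\dots=k_{i-1}=0\}$ and kills its orthogonal complement (for $i=n$ I simply take $D_n=Z_n^*Z_n$, since no square-root factor occurs). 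Second, for $\underline a=(a_0,\dots,a_{i-1})\in\N^{i}$, writing $|\underline a|:=a_0+\dots+a_{i-1}$, I take the partial isometry $U_{\underline a}\in C(S^{2n+1}_0)$ that identifies $\{k_0=\dots=k_{i-1}=0\}$ isometrically with $\{k_0=a_0,\dots,k_{i-1}=a_{i-1}\}$ (sending $\ket{0,\dots,0,k_i,\dots}$ to $\ket{a_0,\dots,a_{i-1},k_i,\dots}$ and vanishing elsewhere); such a $U_{\underline a}$ is a suitable product of the $Z_j$ with the projections $Z_j^*Z_j$, and here one must be a little careful, since the $Z_j$ shift $k_j$ for \emph{every} value of $k_j$, not only $k_j=0$.

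Finally, a direct evaluation on basis vectors gives the identity $\pi(z_i)=\sum_{\underline a\in\N^{i}}q^{|\underline a|}\,U_{\underline a}\,Z_i\,D_i\,U_{\underline a}^{*}$: on $\ket{k}$ only the term with $\underline a=(k_0,\dots,k_{i-1})$ survives and reproduces exactly the formula of Prop.~\ref{prop:rep} (for $i=0$ the sum reduces to the single term $Z_0D_0$). The summands $R_{\underline a}:=U_{\underline a}Z_iD_iU_{\underline a}^{*}$ are contractions whose initial and final spaces both sit inside $\{k_0=a_0,\dots,k_{i-1}=a_{i-1}\}$, hence $R_{\underline a}^*R_{\underline b}=R_{\underline a}R_{\underline b}^*=0$ for $\underline a\ne\underline b$; for such an orthogonal family the norm of any tail $\big\|\sum_{\underline a\notin F}q^{|\underline a|}R_{\underline a}\big\|$ equals $\sup_{\underline a\notin F}q^{|\underline a|}$, which tends to $0$ as the finite set $F$ exhausts $\N^i$. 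Thus the series converges in norm, placing $\pi(z_i)$ in $C(S^{2n+1}_0)$ and finishing the proof. The crux — and the only genuine obstacle — is precisely this passage from strong to norm convergence: one cannot realize $\pi(z_i)$ as a fixed diagonal operator times the \emph{unconstrained} shift in the $i$-th coordinate, because that shift is merely a strong limit of elements of $C(S^{2n+1}_0)$ and need not belong to it; it is the decay of the weights $q^{k_0+\dots+k_{i-1}}$, kept attached to each localized block $R_{\underline a}$, that upgrades the convergence to the norm and lands the limit in the C*-algebra.
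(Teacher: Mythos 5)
Your proof is correct, and its skeleton is the same as the paper's: reduce to showing that each $\pi(z_i)$ lies in $C(S^{2n+1}_0)$, then realize $\pi(z_i)$ as a norm-convergent series of elements of $C(S^{2n+1}_0)$ localized on the subspaces $\{k_0=a_0,\ldots,k_{i-1}=a_{i-1}\}$, with the weights $q^{a_0+\cdots+a_{i-1}}$ supplying the decay. The difference is in how the series is organized. The paper (following Hong--Szyma\'nski) uses the operators $T(k_0,\ldots,k_i)$ of \eqref{eq:Toper} and writes, for $i<n$,
\[
\pi(z_i)=\sum_{k_0,\ldots,k_i\in\N}q^{k_0+\cdots+k_{i-1}}\left(\sqrt{1-q^{2k_i+2}}-\sqrt{1-q^{2k_i}}\right)T(k_0,\ldots,k_i)\,Z_i\,T(k_0,\ldots,k_i)^* ,
\]
so the sum also runs over $k_i$, the multiplier $\sqrt{1-q^{2(k_i+1)}}$ is produced by a telescoping sum in $k_i$, and norm convergence is by absolute summability (each term has norm at most $q^{k_0+\cdots+k_i}$, a convergent geometric series). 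You instead perform what amounts to an Abel summation in the $k_i$-variable: the multiplier is packaged into the diagonal element $D_i$ (itself a norm-limit via orthogonal projections with coefficients tending to zero), so the outer sum runs only over $\underline{a}\in\N^i$ and its summands $R_{\underline{a}}$ have mutually orthogonal supports and ranges. This buys a sharper tail estimate ($\sup_{\underline{a}\notin F}q^{|\underline{a}|}$ rather than a sum) --- and note that this orthogonality trick is genuinely unavailable for the paper's double-indexed family, where blocks with the same $(k_0,\ldots,k_{i-1})$ but different $k_i$ overlap; conversely, the paper's absolute-convergence argument needs no structural input at all, and its identity check is a one-line telescoping computation. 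Incidentally, your $U_{\underline{a}}$ can be written explicitly as $U_{\underline{a}}=Z_0^{a_0}\cdots Z_{i-1}^{a_{i-1}}Z_i^*Z_i$: the right factor is precisely the projection onto $\{k_0=\cdots=k_{i-1}=0\}$, which settles the caveat you raise about the $Z_j$ shifting $k_j$ for all values of $k_j$.

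One harmless imprecision: for $i\geq 1$ and $m=0$, the operator $Z_i^0(Z_i^*)^0$ is the identity, not the projection onto $\{k_0=\cdots=k_{i-1}=0\}$ (that projection is $Z_i^*Z_i$). Hence your $\Pi^{(i)}_0=1-Z_iZ_i^*$ also contains the whole subspace where some $k_j\neq 0$ with $j<i$, and consequently $D_i$ does \emph{not} kill the orthogonal complement of $\{k_0=\cdots=k_{i-1}=0\}$; it acts there as the scalar $-\bigl(1-\sqrt{1-q^{2}}\bigr)$. This does not damage the proof, because $D_i$ only ever appears inside $R_{\underline{a}}=U_{\underline{a}}Z_iD_iU_{\underline{a}}^*$, i.e.\ applied to vectors in the range of $U_{\underline{a}}^*$, where your intended formula is valid, and the $\Pi^{(i)}_m$ are still mutually orthogonal so the series for $D_i$ still converges in norm. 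If you want the stated description of $D_i$ to hold verbatim, simply replace $\Pi^{(i)}_0$ by $Z_i^*Z_i-Z_iZ_i^*$.
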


\begin{proof}
For $0\leq i<n$ and $k_0,\ldots,k_i\in\N^{i+1}$, define
\begin{equation}\label{eq:Toper}
T(k_0,\ldots,k_i):=Z_0^{k_0}Z_1^{k_1}\cdots Z_i^{k_i} .
\end{equation}
These are the operators in \cite{HS02}, equation (4.6). Then, define
\[
A_i:=\sum_{k_0,\ldots,k_i\in\N}
q^{k_0+\ldots+k_{i-1}}\left(
\sqrt{1-q^{2k_i+2}}-\sqrt{1-q^{2k_i}}
\right)T(k_0,\ldots,k_i)Z_iT(k_0,\ldots,k_i)^*
\]
if $0\leq i<n$, and
\[
A_n:=\sum_{k_0,\ldots,k_{n-1}\in\N}
q^{k_0+\ldots+k_{n-1}}T(k_0,\ldots,k_{n-1})Z_nT(k_0,\ldots,k_{n-1})^* .
\]
First, we will prove that $A_i\in C(S^{2n+1}_0)$. Next, we will prove that $A_i=\pi(z_i)$, which implies $\pi(z_i)\in C(S^{2n+1}_0)$ (for all $0\leq i\leq n$), and therefore $\pi\big(C(S^{2n+1}_q)\big)\subseteq C(S^{2n+1}_0)$.

\medskip

\noindent
{\bf Step 1.}
Since each $Z_i$ is a partial isometry, the operators \eqref{eq:Toper} have norm $\leq 1$.
For all $q,x\in\ointerval{0}{1}$ one has
\[
0<\sqrt{1-q^2x^2}-\sqrt{1-x^2}<x .
\]
Using this for $x=q^{k_i}$ one proves that 
the $(k_0,\ldots,k_i)$ term in the sum defining $A_i$ has norm bounded by $q^{k_0+\ldots+k_i}$.
Thus the series is norm-convergent (for $|q|<1$, $\sum_{k\in\N}q^{k}$ is a convergent geometric series)
and $A_i$ is a well-defined element of $C(S^{2n+1}_0)$.

\medskip

\noindent
{\bf Step 2.}
Since $(Z_0^*)^{k_0}\ket{j_0,\ldots,j_n}=0$ for $j_0<k_0$, we have
\begin{align*}
A_0\ket{j_0,\ldots,j_n}
&=\sum_{k_0=0}^{j_0}\left(\sqrt{1-q^{2k_0+2}}-\sqrt{1-q^{2k_0}}\right)Z_0^{k_0+1}(Z_0^*)^{k_0}\ket{j_0,\ldots,j_n} \\
&=\bigg(\sum_{k_0=0}^{j_0}\left(\sqrt{1-q^{2k_0+2}}-\sqrt{1-q^{2k_0}}\right)\bigg)\ket{j_0+1,j_1,\ldots,j_n} \\
&=\sqrt{1-q^{2j_0+2}}\ket{j_0+1,j_1,\ldots,j_n} ,
\end{align*}
where we observed that we have a telescoping sum in the second line.

Next, let $0<i<n$.  By counting the powers of $Z^*$'s (each operator being a negative shift),
we see that $T(k_0,\ldots,k_i)^*\ket{j_0,\ldots,j_n}$ is zero unless $k_m\leq j_m$ for all $0\leq m\leq i$.
If these inequalities are satisfied, then
\[
Z_iT(k_0,\ldots,k_i)^*\ket{j_0,\ldots,j_n}=
\delta_{j_0-k_0,0}\ldots\delta_{j_{i-1}-k_{i-1},0}\ket{0,\ldots,0,j_i-k_i+1,j_{i+1},\ldots,j_n} .
\]
Thus
\[
T(k_0,\ldots,k_i)Z_iT(k_0,\ldots,k_i)^*\ket{j_0,\ldots,j_n}=\delta_{k_0,j_0}\ldots\delta_{k_{i-1},j_{i-1}}
\ket{j_0,\ldots,j_i+1,\ldots,j_n} ,
\]
and finally
\[
A_i\ket{j_0,\ldots,j_n}=\sum_{k_i=0}^{j_i}
q^{j_0+\ldots+j_{i-1}}\left(
\sqrt{1-q^{2k_i+2}}-\sqrt{1-q^{2k_i}}
\right)\ket{j_0,\ldots,j_i+1,\ldots,j_n} .
\]
Since the latter is a telescoping sum, we get the desired result
\[
A_i\ket{j_0,\ldots,j_n}=q^{j_0+\ldots+j_{i-1}}\sqrt{1-q^{2j_i+2}}\ket{j_0,\ldots,j_i+1,\ldots,j_n} .
\]
Finally, for $i=n$,
\[
Z_nT(k_0,\ldots,k_{n-1})^*\ket{j_0,\ldots,j_n}=
\delta_{j_0-k_0,0}\ldots\delta_{j_{n-1}-k_{n-1},0}\ket{0,\ldots,0,0,j_n+1} ,
\]
and then
\[
A_n\ket{j_0,\ldots,j_n}=q^{j_0+\ldots+j_{n-1}}\ket{j_0,\ldots,j_{n-1},j_n+1} .
\]
Comparing these with the formulas in Prop.~\ref{prop:rep}, we see that $A_i=\pi(z_i)$ for all $0\leq i\leq n$.
\end{proof}

\begin{lemma}\label{lemma:sup}
For all $0<q<1$, one has $\pi\big(C(S^{2n+1}_q)\big)\supseteq C(S^{2n+1}_0)$.
\end{lemma}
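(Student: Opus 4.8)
The plan is to establish the reverse inclusion by showing that each generator $Z_0,\ldots,Z_n$ of $C(S^{2n+1}_0)=C^*(Z_0,\ldots,Z_n)$ already lies in the C*-algebra $\pi\big(C(S^{2n+1}_q)\big)$. Write $W_i:=\pi(z_i)$; by Lemma~\ref{lemma:sub} each $W_i$ is an element of $C(S^{2n+1}_0)$, and since both algebras are concretely realized inside $\mathcal{B}(\ell^2(\N^n\times\Z))$ it makes sense to compare the operators $W_i$ and $Z_i$ directly. Looking at Prop.~\ref{prop:rep}, the two differ in exactly two respects: $W_i$ carries the weight $q^{k_0+\ldots+k_{i-1}}\sqrt{1-q^{2(k_i+1)}}$ (resp.~$q^{k_0+\ldots+k_{n-1}}$ for $i=n$), whereas $Z_i$ carries the sharp weight $1$; and $W_i$ acts on all of $\ell^2(\N^n\times\Z)$, whereas $Z_i$ is supported on the subspace $\{k_0=\ldots=k_{i-1}=0\}$. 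I would recover $Z_i$ from $W_i$ by correcting for both discrepancies \emph{inside} the algebra.

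For the weight I would use the continuous functional calculus. The operator $W_iW_i^*=\pi(z_iz_i^*)$ is diagonal with the positive eigenvalues recorded in \eqref{eq:diagonal}, so its spectrum lies in $[0,1]$; the function $g(t):=(1-q^2+q^2t)^{-1/2}$ is continuous there since $1-q^2+q^2t\geq 1-q^2>0$, whence $g(W_iW_i^*)\in\pi\big(C(S^{2n+1}_q)\big)$. For the support, let $Q_i$ denote the orthogonal projection onto $\{k_0=\ldots=k_{i-1}=0\}$ (so that $Q_0=1$). I claim that
\[
Z_i=W_i\,Q_i\,g(W_iW_i^*) ,\qquad 0\leq i\leq n ,
\]
which I would verify on the standard basis: on the range of $Q_i$ the decaying factor $q^{k_0+\ldots+k_{i-1}}$ equals $1$, so $W_i$ acts there with weight $\sqrt{1-q^{2(k_i+1)}}$ (resp.~$1$ for $i=n$), and this is cancelled exactly by $g(1-q^{2k_i})=(1-q^{2(k_i+1)})^{-1/2}$ (resp.~by $g(1)=1$), leaving the sharp shift $\ket{k}\mapsto\ket{k_0,\ldots,k_i+1,\ldots,k_n}$ restricted to $\{k_0=\ldots=k_{i-1}=0\}$, i.e.~exactly $Z_i$.

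It remains to produce $Q_i$ inside the algebra, and this is where I expect the only real difficulty. Expressing $Q_i$ as a sum of joint spectral projections of the commuting family $\{\pi(z_mz_m^*)\}$ leads to an infinite orthogonal sum, which does not converge in norm, so a direct functional-calculus construction of $Q_i$ fails. The way around is to express $Q_i$ through the $q=0$ generators already obtained: since $Z_mZ_m^*$ is the projection onto $\{k_0=\ldots=k_{m-1}=0,\ k_m\geq 1\}$ and these are mutually orthogonal with union $\{(k_0,\ldots,k_{i-1})\neq 0\}$, one has the \emph{finite} identity
\[
Q_i=1-\sum\nolimits_{m=0}^{i-1}Z_mZ_m^* .
\]
This turns the whole argument into an induction on $i$: $Q_0=1$ gives $Z_0=W_0\,g(W_0W_0^*)\in\pi\big(C(S^{2n+1}_q)\big)$, and if $Z_0,\ldots,Z_{i-1}$ already lie in the algebra then so does $Q_i$, and hence so does $Z_i=W_iQ_ig(W_iW_i^*)$. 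Once every $Z_i$ belongs to $\pi\big(C(S^{2n+1}_q)\big)$, the inclusion $C(S^{2n+1}_0)\subseteq\pi\big(C(S^{2n+1}_q)\big)$ follows because the $Z_i$ generate $C(S^{2n+1}_0)$.
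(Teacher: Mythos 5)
Your proof is correct, and although it follows the same master plan as the paper's --- realize everything concretely on $\ell^2(\N^n\times\Z)$, set $A:=\pi\big(C(S^{2n+1}_q)\big)$, and show each generator $Z_i$ lies in $A$ by writing it as $\pi(z_i)$ times correction operators already known to be in $A$ --- the way you manufacture the corrections is genuinely different. The paper produces your projection $Q_i$ inside $A$ by spectral calculus on the $q$-side: it is $\chi_1(\pi(\xi_i))$, where $\xi_i=\sum_{j\geq i}z_jz_j^*$ and $1$ is an isolated point of $\sigma(\pi(\xi_i))$; it then fixes the weights with a diagonal operator $a_i=\sum_{m\in\N}(1-q^{2(m+1)})^{-1/2}p_{i,m}$ assembled from infinitely many eigenprojections $p_{i,m}\in A$, and sets $Z_i=\pi(z_i)a_i$. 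You instead obtain $Q_i$ from the $q=0$ side via the finite identity $Q_i=1-\sum_{m<i}Z_mZ_m^*$, at the price of an induction on $i$ (which is sound, with $Q_0=1$ as base case), and you replace the infinite sum of eigenprojections by the single continuous-functional-calculus element $g(W_iW_i^*)$ with $g(t)=(1-q^2+q^2t)^{-1/2}$, checking on the basis that $g(1-q^{2k_i})=(1-q^{2(k_i+1)})^{-1/2}$ and $g(1)=1$, so that $Z_i=W_iQ_i\,g(W_iW_i^*)$. Your version buys a genuine simplification: a series $\sum_m c_m p_{i,m}$ of mutually orthogonal projections is norm-convergent only if $c_m\to 0$, and the paper's coefficients $(1-q^{2(m+1)})^{-1/2}$ tend to $1$, so its Cauchy-sequence argument for $a_i$ needs repair as written (one should split off $y_i=\chi_1(\pi(\xi_i))$ and observe that the remaining coefficients $(1-q^{2(m+1)})^{-1/2}-1$ do tend to $0$); your functional-calculus operator raises no convergence issue whatsoever. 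What the paper's route buys in exchange is that each $Z_i$ is produced independently of the others, with no induction, and it exhibits explicitly the eigenprojections $p_{i,m}$, which are of independent interest in the graph-algebra picture.
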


\begin{proof}
Let $A$ be a concrete C*-algebra of bounded operators on a Hilbert space, $a=a^*\in A$, $\lambda\in\R$ and $\chi_\lambda:\R\to\R$ the characteristic function of the singleton $\{\lambda\}$. If $\lambda$ is an isolated point in the spectrum, then $\chi_\lambda(a)$ belongs to $A$ and is a projection onto the eigenspace corresponding to the eigenvalue $\lambda$.
Now, let $A:=\pi\big(C(S^{2n+1}_q)\big)$. We must show that $Z_i\in A$ for all $0\leq i\leq n$.

For all $1\leq i\leq n$, call
\begin{equation}\label{eq:ri}
\xi_i:=\sum_{j=i}^nz_jz_j^*  .
\end{equation}
It follows from \eqref{eq:diagonal} that
\[
\pi(\xi_i)\ket{k_0,\ldots,k_n}=q^{2(k_0+\ldots+k_{i-1})}\ket{k_0,\ldots,k_n} .
\]
For diagonal operators, the spectrum is the closure of the point spectrum, and the only accumulation point in the spectrum of $\pi(\xi_i)$ is $0$.
It follows from the discussion above that $y_i:=\chi_1(\pi(\xi_i))$ belongs to $A$, and
\[
y_i\ket{k_0,\ldots,k_n}=\delta_{k_0,0}\ldots\delta_{k_{i-1},0}\ket{0,\ldots,0,k_i,\ldots,k_n} .
\]
In particular, $Z_n=\pi(z_n)y_n\in A$.

Next, for all $0\leq i<n$ the operator $\big\{1-\pi(z_i^*z_i)\big\}y_i$ is diagonal and all non-zero eigenvalues are isolated points in the spectrum. Each eigenprojection $p_{i,m}$, given for $m\in\N$ by
\[
p_{i,m}\ket{k_0,\ldots,k_n}=\delta_{k_0,0}\ldots\delta_{k_{i-1},0}\delta_{k_i,m}\ket{0,\ldots,0,m,k_{i+1},\ldots,k_n} ,
\]
belongs to $A$. Let
\[
a_i:=\sum_{m\in\N}\frac{1}{\sqrt{1-q^{2(m+1)}}}\,p_{i,m} .
\]
This is a diagonal operator,
\[
a_i\ket{k_0,\ldots,k_n} =\frac{1}{\sqrt{1-q^{2(k_i+1)}}}
\delta_{k_0,0}\ldots\delta_{k_{i-1},0}\ket{0,\ldots,0,k_i,\ldots,k_n} .
\]
The partial sums $s_j:=\sum_{m\leq j}\frac{1}{\sqrt{1-q^{2(m+1)}}}\,p_{i,m}$ form a Cauchy sequence. Indeed, for all $\varepsilon>0$ small enough, choose an integer $N_\varepsilon$ big enough so that $(1-q^{2N_\varepsilon})^{-1/2}<\varepsilon$. Then
\[
\|s_i-s_j\|\leq\sup\left\{(1-q^{2(m+1)})^{-1/2}:j<m\leq i\right\}<\varepsilon ,\qquad\forall\;i>j>N_\varepsilon .
\]
Thus $a_i\in A$, 
which implies $Z_i=\pi(z_i)a_i\in A$.
\end{proof}

Denote by $D_n$ the commutative unital C*-subalgebra of $C(S^{2n+1}_q)$ generated by the set \eqref{eq:mutuallycommuting}. Clearly $D_0=\C 1\subseteq C(S^1)$. We are interested in the case $n\geq 1$.
A more convenient set of generators for $D_n$, that we are going to use below, is given by the elements in \eqref{eq:ri}.
By Gelfand duality, one has $D_n\cong C(\Omega^n_q)$, where
\[
\Omega^n_q:= \Big\{ \big(\chi(\xi_1),,\ldots,\chi(\xi_n)\big)\in\R^n :\chi\text{ is a character of }D_n\Big\}
\]
is the spectrum of $D_n$ (and $C(\Omega^n_q)$ denotes the C*-algebra of continuous functions on this topological space). We stress that $\Omega^n_q$ is a standard topological space, and not a ``quantum space''.
The subscript $q$ refers to the fact that this space depends on the value of the parameter, as we shall see.
Notice also that $\Omega^n_q$ coincides with the joint spectrum
\[
\Omega^n_q =\sigma\big( \xi_1, \ldots,\xi_n \big) ,
\]
i.e.~with the set of $(\lambda_1,\ldots,\lambda_n)\in\R^n$ such that
the ideal in $D_n$ spanned by $\lambda_1-\xi_1,\ldots,\lambda_n-\xi_n$ is proper.

\begin{lemma}\label{lemma:YYs}
Let $0<t<1$ and let $Y$ be an element in a C*-algebra satisfying
\begin{equation}\label{eq:satisfyingYY}
Y^*Y=tYY^*+1-t .
\end{equation}
Then,
\begin{equation}\label{eq:wederive}
\sigma(YY^*)\subseteq\big\{1-t^k:k\in\overline{\N}\big\} .
\end{equation}
\end{lemma}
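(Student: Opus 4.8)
The plan is to play the relation \eqref{eq:satisfyingYY} off against the elementary fact that $YY^*$ and $Y^*Y$ have the same nonzero spectrum, and then iterate. Write $A:=YY^*$, so that \eqref{eq:satisfyingYY} reads $Y^*Y=\phi(A)$, where $\phi(x):=tx+1-t$. Since the spectrum transforms by an affine map, $\sigma(Y^*Y)=\phi\big(\sigma(A)\big)$; and since $\sigma(YY^*)\cup\{0\}=\sigma(Y^*Y)\cup\{0\}$ in any unital Banach algebra, the two sets $\sigma(A)$ and $\sigma(Y^*Y)$ agree away from $0$. Writing $S:=\sigma(A)$, these combine into the self-similarity relation
\[
\phi(S)\setminus\{0\}=S\setminus\{0\} .
\]

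First I would record the two features of $\phi$ that matter: it is an affine bijection of $\R$ fixing $1$ (indeed $\phi(1)=1$), with inverse $\phi^{-1}(x)=1+(x-1)/t$; and $S$ is bounded, being the spectrum of an element of a C*-algebra. The crucial consequence of the self-similarity relation is that one may walk \emph{backwards} along $\phi$ inside $S$, away from $0$: if $\lambda\in S\setminus\{0\}$, then $\lambda\in\phi(S)\setminus\{0\}\subseteq\phi(S)$, and injectivity of $\phi$ forces $\phi^{-1}(\lambda)\in S$.

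Now fix $\lambda\in S$. If $\lambda=0$ then $\lambda=1-t^0$, and if $\lambda=1$ then $\lambda=1-t^{+\infty}$ under the convention $t^{+\infty}=0$; both lie in the target set. Otherwise $\lambda\neq 0,1$, and I set $\lambda_0:=\lambda$ and $\lambda_{k+1}:=\phi^{-1}(\lambda_k)$ for as long as the current iterate is nonzero. Induction gives the closed form $\lambda_k=1+(\lambda-1)t^{-k}$, and the previous paragraph shows $\lambda_k\in S$ whenever $\lambda_0,\ldots,\lambda_{k-1}$ are all nonzero. Were no iterate ever to vanish, $(\lambda_k)$ would be an infinite subset of $S$ with $|\lambda_k-1|=|\lambda-1|\,t^{-k}\to\infty$ (here $\lambda\neq 1$ enters), contradicting the boundedness of $S$. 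Hence $\lambda_m=0$ for some $m\geq 1$ (as $\lambda_0=\lambda\neq 0$), and solving $1+(\lambda-1)t^{-m}=0$ gives $\lambda=1-t^m$. Thus every $\lambda\in S$ equals $1-t^k$ for some $k\in\overline{\N}$, which is \eqref{eq:wederive}.

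The two inputs --- the affine spectral mapping and the identity $\sigma(YY^*)\cup\{0\}=\sigma(Y^*Y)\cup\{0\}$ --- are standard and I would merely cite them. The only point requiring care is the bookkeeping of the backward orbit: $\phi^{-1}$ may legitimately be applied only at nonzero spectral points, so the substance of the proof is the dichotomy ``either the orbit reaches $0$, forcing $\lambda=1-t^m$, or it escapes to infinity, which boundedness forbids''. I expect this termination-versus-escape alternative, rather than any quantitative estimate, to be the heart of the matter.
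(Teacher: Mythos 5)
Your proof is correct and is essentially the paper's own argument: both iterate the inverse of the affine map $x\mapsto tx+1-t$ along the spectrum (the paper's $\mu_{n+1}=t^{-1}\mu_n+1-t^{-1}$ is exactly your $\phi^{-1}$), both use $\sigma(YY^*)\cup\{0\}=\sigma(Y^*Y)\cup\{0\}$ to keep the backward orbit inside the spectrum, and both conclude from boundedness of the spectrum that the orbit must hit $0$, forcing $\lambda=1-t^m$. The only differences are cosmetic: you work directly in $\sigma(YY^*)$ and frame the termination-versus-escape step as a dichotomy, whereas the paper argues by contradiction in $\sigma(Y^*Y)$ and transfers back at the end.
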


\begin{proof}
Let $\lambda\in\sigma(Y^*Y)$ and assume that
\begin{equation}\label{eq:assume}
\lambda\neq 1 .
\end{equation}
For $n\in\N$, define
\[
\mu_n:=t^{-n}(\lambda-1)+1 .
\]
By contradiction, assume that $\mu_n\neq 0$ for all $n\in\N$.
Since $\mu_{n+1}=t^{-1}\mu_n+1-t^{-1}$, from \eqref{eq:satisfyingYY} we deduce that
\[
\mu_n\in\sigma(Y^*Y)\Longrightarrow\mu_{n+1}\in\sigma(YY^*).
\]
But
\begin{equation}\label{eq:wededuce}
\sigma(YY^*)\cup\{0\}=\sigma(Y^*Y)\cup\{0\} ,
\end{equation}
hence 
\[
\mu_n\in\sigma(Y^*Y)\Longrightarrow\mu_{n+1}\in\sigma(Y^*Y).
\]
Since $\mu_0=\lambda\in\sigma(Y^*Y)$, we deduce by induction that $\mu_n\in\sigma(Y^*Y)$
for all $n\in\N$. But the assumption \eqref{eq:assume} implies that the sequence $(\mu_n)_{n\in\N}$ is divergent, and we get a contradiction (the spectrum is bounded by the norm of $Y$).
Hence, it must be $\mu_n=0$ for some $n\in\N$, that means $\lambda=1-t^n$.
All the points $\lambda$ in $\sigma(Y^*Y)$ different from $1=1-t^{+\infty}$ are of the above form, thus $\sigma(Y^*Y)\subseteq\big\{1-t^k:k\in\overline{\N}\big\}$ and, using again \eqref{eq:wededuce}, we get \eqref{eq:wederive}.
\end{proof}

In the following, we denote by $\vartheta$ the faithful conditional expectation on $C(S^{2n+1}_q)$ described in Lemma~\ref{lemma:faithcond}\ref{en:faithcondA} and extending the map in \eqref{eq:maptheta}, which is denoted by the same symbol.

\begin{prop}\label{prop:thetabis}
For all $0<q<1$, one has
\[
D_n=\vartheta(C(S^{2n+1}_q))=C(S^{2n+1}_q)^{\mathbb{T}^{n+1}} .
\]
\end{prop}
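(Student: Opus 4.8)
The plan is to prove the three sets coincide via a short chain of inclusions, the middle equality being essentially automatic. First I would note that, by Prop.~\ref{prop:strongly}, the action of $\mathbb{T}^{n+1}$ on $C(S^{2n+1}_q)$ is strongly continuous, so Lemma~\ref{lemma:faithcond}\ref{en:faithcondA} applies and $\vartheta$ is a (faithful) conditional expectation onto the fixed-point algebra. Being idempotent, with range contained in $C(S^{2n+1}_q)^{\mathbb{T}^{n+1}}$ and equal to the identity there, its range is exactly $C(S^{2n+1}_q)^{\mathbb{T}^{n+1}}$; this yields the second equality $\vartheta(C(S^{2n+1}_q))=C(S^{2n+1}_q)^{\mathbb{T}^{n+1}}$ at no further cost. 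It then remains to sandwich this algebra between $D_n$ and itself.

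The inclusion $D_n\subseteq C(S^{2n+1}_q)^{\mathbb{T}^{n+1}}$ is immediate: $D_n$ is generated by the elements $z_iz_i^*$, which are $\mathbb{T}^{n+1}$-invariant by the very definition of the action \eqref{eq:Taction}, and $C(S^{2n+1}_q)^{\mathbb{T}^{n+1}}$ is a closed C*-subalgebra, hence contains the C*-algebra generated by any family of invariant elements.

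For the reverse inclusion $\vartheta(C(S^{2n+1}_q))\subseteq D_n$ I would use continuity of $\vartheta$ together with the density of the polynomial algebra. Since $\vartheta$ is a contraction and $\mathcal{A}(S^{2n+1}_q)$ is dense in $C(S^{2n+1}_q)$, one has $\vartheta(C(S^{2n+1}_q))\subseteq\overline{\vartheta(\mathcal{A}(S^{2n+1}_q))}$. But $\vartheta$ maps $\mathcal{A}(S^{2n+1}_q)$ into $\mathcal{A}(\Delta^n_q)$ by Prop.~\ref{prop:theta}\ref{en:thetaB}, and the norm-closure of the *-algebra $\mathcal{A}(\Delta^n_q)$ (generated by the $z_iz_i^*$) is precisely $D_n$. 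Concretely, for $a\in C(S^{2n+1}_q)$ I would choose polynomials $a_k\to a$, observe that $\vartheta(a_k)\in\mathcal{A}(\Delta^n_q)\subseteq D_n$ and $\vartheta(a_k)\to\vartheta(a)$, and conclude $\vartheta(a)\in D_n$ because $D_n$ is closed. Combining the three relations gives $D_n\subseteq C(S^{2n+1}_q)^{\mathbb{T}^{n+1}}=\vartheta(C(S^{2n+1}_q))\subseteq D_n$, forcing equality throughout.

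The argument is almost formal; the genuine input is Prop.~\ref{prop:theta}\ref{en:thetaB}, which identifies the fixed points at the purely algebraic level, and the continuity of $\vartheta$ that transports this identification to the C*-completion. Accordingly, the step to be careful with — and the one most likely to hide an error — is the interchange of $\vartheta$ with norm-closure: it is essential that $\vartheta$ be a contraction, so that $\vartheta(\overline{B})\subseteq\overline{\vartheta(B)}$, and that $\overline{\mathcal{A}(\Delta^n_q)}=D_n$, i.e.\ that $D_n$ is genuinely the closure of $\mathcal{A}(\Delta^n_q)$ rather than something larger. Both hold here, so no real obstacle arises, and the result follows cleanly from the polynomial computation of Prop.~\ref{prop:theta} and the conditional-expectation machinery of Sect.~\ref{sec:21}.
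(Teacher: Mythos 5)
Your proof is correct and follows essentially the same route as the paper: the second equality is automatic from the conditional-expectation property (Lemma~\ref{lemma:faithcond} applied via Prop.~\ref{prop:strongly}), and the first equality comes from Prop.~\ref{prop:theta} together with continuity of $\vartheta$ and the fact that $D_n$ is the norm-closure of $\mathcal{A}(\Delta^n_q)$. Your version merely spells out the contraction/density interchange and the inclusion $D_n\subseteq C(S^{2n+1}_q)^{\mathbb{T}^{n+1}}$ that the paper leaves implicit.
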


\begin{proof}
The second equality is obvious. From Prop.~\ref{prop:theta}, one has $\vartheta(\mathcal{A}(S^{2n+1}_q))=\mathcal{A}(\Delta^n_q)$ and then by continuity $\vartheta(C(S^{2n+1}_q))=D_n$, since $D_n$ is the norm-closure of $\mathcal{A}(\Delta^n_q)$.
\end{proof}

The next lemma is Lemma 4.1 of \cite{HS02}. We follow the proof of \cite[Lemma 2.10]{MK22}.

\begin{lemma}\label{lemma:one}
Let $q\neq 0$, $n\geq 1$, and let $F_n:\overline{\N}^n\to\R^n$ be the closed embedding
\[
F_n(k_1,\ldots,k_n):=(q^{2k_1},q^{2(k_1+k_2)},\ldots,q^{2(k_1+\ldots+k_n)}) .
\]
Then, $\Omega^n_q=\mathrm{Im}(F_n)$.
\end{lemma}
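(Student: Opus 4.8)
The plan is to establish the two inclusions $\mathrm{Im}(F_n)\subseteq\Omega^n_q$ and $\Omega^n_q\subseteq\mathrm{Im}(F_n)$ separately, the first via the representation $\pi$ and the second by induction on $n$.

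For $\mathrm{Im}(F_n)\subseteq\Omega^n_q$ I would use $\pi$ from Prop.~\ref{prop:rep}. Restricting $\pi$ to $D_n$ gives a surjective $*$-homomorphism $D_n\to\pi(D_n)$ of commutative unital C*-algebras, so every character of $\pi(D_n)$ pulls back to a character of $D_n$ and hence $\sigma\big(\pi(\xi_1),\ldots,\pi(\xi_n)\big)\subseteq\Omega^n_q$. By \eqref{eq:diagonal} each $\pi(\xi_i)$ is diagonal in the standard basis with $\pi(\xi_i)\ket{k_0,\ldots,k_n}=q^{2(k_0+\ldots+k_{i-1})}\ket{k_0,\ldots,k_n}$, so the joint eigenvalues of $(\pi(\xi_1),\ldots,\pi(\xi_n))$ are exactly the tuples $F_n(k_0,\ldots,k_{n-1})$ with $k_0,\ldots,k_{n-1}\in\N$. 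For a family of operators simultaneously diagonal in the standard basis the joint spectrum is the closure of the set of joint eigenvalues; since $\N^n$ is dense in $\overline{\N}^n$ and $F_n$ is continuous with compact (hence closed) image, this closure is precisely $\mathrm{Im}(F_n)$. Thus $\mathrm{Im}(F_n)=\sigma(\pi(\xi_1),\ldots,\pi(\xi_n))\subseteq\Omega^n_q$. Note that this uses only that $\pi$ is a representation, not that it is faithful, so there is no circularity with the later faithfulness argument.

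For the reverse inclusion I would argue by induction on $n\ge 1$. Fix a character $\chi$ of $D_n$ and set $\lambda_i:=\chi(\xi_i)$. Combining \eqref{eq:qsphereC} (with $i=0$) and \eqref{eq:qsphereE} gives $z_0^*z_0=q^2z_0z_0^*+(1-q^2)$, so Lemma~\ref{lemma:YYs} applied with $Y=z_0$, $t=q^2$ yields $\sigma(z_0z_0^*)\subseteq\{1-q^{2k}:k\in\overline{\N}\}$, whence $\sigma(\xi_1)=\sigma(1-z_0z_0^*)\subseteq\{q^{2k}:k\in\overline{\N}\}$. In particular $\lambda_1=q^{2k_1}$ for some $k_1\in\overline{\N}$; this already settles the base case $n=1$. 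If $k_1=+\infty$, then $\lambda_1=0$, and since $\xi_1\ge\xi_2\ge\ldots\ge\xi_n\ge 0$ (each difference $\xi_i-\xi_{i+1}=z_iz_i^*$ being positive) positivity of $\chi$ forces $\lambda_i=0$ for all $i$, giving the point $F_n(+\infty,0,\ldots,0)\in\mathrm{Im}(F_n)$. Assume now $k_1<\infty$. As $0$ is the only accumulation point of $\{q^{2k}\}$, the value $q^{2k_1}$ is isolated in $\sigma(\xi_1)$, so the spectral projection $e:=\chi_{\{q^{2k_1}\}}(\xi_1)$ lies in $D_n$ and $\chi(e)=1$. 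Since $\xi_1=1-z_0z_0^*$ and, by Lemma~\ref{lemma:commut}\ref{en:commutA}, $z_0z_0^*$ commutes with $z_1,\ldots,z_n$, the projection $e$ commutes with $z_1,\ldots,z_n$ as well.

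The heart of the argument is then a corner reduction. I would pass to the unital corner $eC(S^{2n+1}_q)e$ and consider the rescaled elements $w_j:=q^{-k_1}z_je$ for $1\le j\le n$. Because $e$ commutes with the $z_j$ ($j\ge 1$), the relations \eqref{eq:qsphereA}, \eqref{eq:qsphereB} and \eqref{eq:qsphereC} for indices $\ge 1$ descend to the corresponding relations among $w_1,\ldots,w_n$, while \eqref{eq:qsphereE} becomes $\sum_{j=1}^n w_jw_j^*=q^{-2k_1}\xi_1e=e$, the unit of the corner. After relabelling $w_j$ as the $(j-1)$-th generator, these are exactly the defining relations of $\mathcal{A}(S^{2n-1}_q)$, so by the universal property (Prop.~\ref{prop:embedded}) there is a $*$-homomorphism $\Phi\colon C(S^{2n-1}_q)\to eC(S^{2n+1}_q)e$ sending the generators $\xi'_i$ of $D_{n-1}$ to $q^{-2k_1}\xi_{i+1}e$ for $1\le i\le n-1$. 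Its restriction $\Phi|_{D_{n-1}}$ surjects onto the corner $eD_n$, since the latter is generated, as a unital algebra with unit $e$, by $\xi_2e,\ldots,\xi_ne$ (the element $\xi_1e=q^{2k_1}e$ being scalar). Dualizing this surjection and invoking the inductive hypothesis $\Omega^{n-1}_q=\mathrm{Im}(F_{n-1})$, the restriction of $\chi$ to $eD_n$ produces a point $(q^{-2k_1}\lambda_2,\ldots,q^{-2k_1}\lambda_n)\in\mathrm{Im}(F_{n-1})$, i.e.\ $q^{-2k_1}\lambda_i=q^{2(k_2+\ldots+k_i)}$ for some $k_2,\ldots,k_n\in\overline{\N}$. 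Hence $\lambda_i=q^{2(k_1+\ldots+k_i)}$ for all $i$ and $(\lambda_1,\ldots,\lambda_n)=F_n(k_1,\ldots,k_n)$, which closes the induction. The main obstacle is precisely this corner reduction: one must check carefully that $e$ commutes with $z_1,\ldots,z_n$, that the rescaled $w_j$ satisfy exactly the quantum sphere relations one dimension lower (with $e$ in the role of the unit), and that $\Phi|_{D_{n-1}}$ hits all of $eD_n$; once these are secured, the transfer of the joint spectrum through the dualized surjection is routine.
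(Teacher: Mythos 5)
Your proof is correct, and for the hard inclusion $\Omega^n_q\subseteq\mathrm{Im}(F_n)$ it takes a genuinely different route from the paper's. The easy inclusion and the first step (applying Lemma~\ref{lemma:YYs} to $Y=z_0$, $t=q^2$, to get $\chi(\xi_1)=q^{2k_1}$) coincide with the paper. After that, the paper inducts on the coordinate index $i$ inside the fixed algebra $C(S^{2n+1}_q)$: it introduces $A=C^*(z_i,\ldots,z_n)$ and its commutative subalgebra $C$, quotients both by the ideals generated by the central element $\xi_i-q^{2k}$, and uses the conditional expectation $\vartheta$ (the Mikkelsen--Kaad ingredient the survey is built around) to show that $C/J$ embeds into $A/I$, so that Lemma~\ref{lemma:YYs} applied in the quotient constrains $\chi(\xi_{i+1})$; the value $k=+\infty$ is handled inside the same framework, since there $p_1(\xi_{i+1})$ is simultaneously positive and negative, hence zero. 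You instead induct on the dimension $n$, exploiting that $q^{2k_1}$ is isolated in $\sigma(\xi_1)$ when $k_1<\infty$: the spectral projection $e=\chi_{\{q^{2k_1}\}}(\xi_1)$ lies in $D_n$, commutes with $z_1,\ldots,z_n$ by Lemma~\ref{lemma:commut}\ref{en:commutA}, and the compressed rescaled elements $q^{-k_1}z_je$ satisfy the defining relations \eqref{eq:qsphere} of $\mathcal{A}(S^{2n-1}_q)$ in the corner $eC(S^{2n+1}_q)e$ (the key identity being $\xi_1e=q^{2k_1}e$); universality then yields $\Phi:C(S^{2n-1}_q)\to eC(S^{2n+1}_q)e$, and $\chi\circ\Phi|_{D_{n-1}}$ is a character of $D_{n-1}$ to which the inductive hypothesis applies. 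I checked the details and they hold; two remarks: surjectivity of $\Phi|_{D_{n-1}}$ onto $eD_n$ is not actually needed (you only need $\Phi(D_{n-1})\subseteq D_n$ and $\chi(e)=1$ for $\chi\circ\Phi|_{D_{n-1}}$ to be a character), and your separate treatment of $k_1=+\infty$ via monotonicity $\xi_1\geq\cdots\geq\xi_n\geq 0$ is unavoidable in your scheme, precisely because $0$ is not isolated in the spectrum and no spectral projection exists there --- this is the one case your method cannot absorb and the paper's quotient method handles uniformly. As for what each approach buys: yours replaces the quotient-plus-conditional-expectation machinery by an honest projection inside the algebra, so the character restricts tautologically and no embedding $C/J\hookrightarrow A/I$ has to be established; it also makes visible the recursive structure of quantum spheres (a corner of $C(S^{2n+1}_q)$ carries a copy of the lower-dimensional sphere relations). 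The paper's route, by contrast, is uniform in $k$ and keeps conditional expectations center stage, which is the stated methodological point of the survey.
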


\begin{proof}
The joint spectrum contains the joint eigenvalues of all bounded *-representations and their limit points as well.
From Prop.~\ref{prop:joint}\ref{en:jointB}, the set of joint eigenvalues of $(\pi(\xi_1),\ldots,\pi(\xi_n))$ is given by all tuples of the form $F_n(k_1,\ldots,k_n)$ with $k_1,\ldots,k_n\in\N$ (here we label the components of the tuple from $1$ to $n$ rather than from $0$ to $n-1$). Thus, $F_n(\N^n)\subseteq \Omega^n_q$ and, since $\Omega^n_q$ is closed,
$F_n(\overline{\N}^n)\subseteq \Omega^n_q$. We must prove the opposite inclusion.

\smallskip

Let $\chi$ be a character of $D_n$. From \eqref{eq:relk} with $i=0$ and $m=1$ we see that $Y=z_0$ satisfies the hypothesis of Lemma \ref{lemma:YYs} with $t=q^2$. Hence
\[
1-\chi(\xi_1)=\chi(z_0z_0^*)\in\sigma(z_0z_0^*)\subseteq\big\{1-q^{2k}:k\in\overline{\N}\big\} .
\]
Thus, $\chi(\xi_1)=q^{2k}$ for some $k\in\overline{\N}$.
Now we show that, for all $1\leq i<n$, if $\chi(\xi_i)=q^{2k}$ with $k\in\overline{\N}$, then $\chi(\xi_{i+1})=q^{2(k+m)}$ for some $m\in\overline{\N}$, thus concluding the proof by induction.

\smallskip

From now on we assume that $1\leq i<n$ is fixed, $k\in\overline{\N}$ is fixed, and $\chi(\xi_i)=q^{2k}$.

Denote by $A\subseteq C(S^{2n+1}_q)$ the unital C*-subalgebra generated by $z_i,\ldots,z_n$ and let $C\subseteq A$ be the commutative unital C*-subalgebra generated by $z_iz_i^*,\ldots,z_nz_n^*$. We note that
\begin{itemize}\itemsep=2pt
\item $\xi_i\in C\subseteq A$,
\item $\xi_i=1-\sum_{j=0}^{i-1}z_jz_j^*$ is a central element in $A$, cf.~Lemma \ref{lemma:commut}\ref{en:commutA}.
\end{itemize}
We call $I$ the two-sided *-ideal in $A$ generated by $\xi_i-q^{2k}$ and
$J\subseteq I$ the two-sided *-ideal in $C$ generated by $\xi_i-q^{2k}$.
Thus, we have a commutative diagram
\[
\begin{tikzcd}[column sep=large,row sep=large]
&& C(S^{2n+1}_q) \\
0\ar[r] & I\ar[r] & A\ar[r,"p_1"]\ar[u,hook] & A/I\ar[r] & 0 \\
0\ar[r] & J\ar[r]\ar[u,hook] & C\ar[r,"p_2"]\ar[u,hook] & C/J\ar[r]\ar[u,"f",swap] & 0
\end{tikzcd}
\]
whose two rows are extensions of C*-algebras.

Let $\vartheta:C(S^{2n+1}_q)\to D_n$ be the conditional expectation as before.
One can repeat the proof of Prop.~\ref{prop:theta} to show that a polynomial in $z_i,\ldots,z_n$ is $\mathbb{T}^{n+1}$-invariant if and only if it belongs to $C$, hence
$\vartheta$ restricts to a faithful conditional expectation $\vartheta|_A:A\to C$ (that we denote by the same symbol).
Since $\xi_i-q^{2k}$ is central in $A$, every element in $I$ can be written in the form $a(\xi_i-q^{2k})$ with $a\in A$. Since $\xi_i-q^{2k}$ is $\mathbb{T}^{n+1}$-invariant,
\begin{align*}
\vartheta(a(\xi_i-q^{2k})) &=\int_{\mathbb{T}^{n+1}}\alpha_{\mv{t}}(a)\alpha_{\mv{t}}(\xi_i-q^{2k})d\mu_t \\
&=\left(\int_{\mathbb{T}^{n+1}}\alpha_{\mv{t}}(a)d\mu_t\right)(\xi_i-q^{2k})
=\vartheta(a)(\xi_i-q^{2k}) ,
\end{align*}
proving that $\vartheta(I)\subseteq J$. Hence, $\vartheta$ induces a linear map $[\vartheta]:A/I\to C/J$ between quotient C*-algebras. From the above diagram we get the following commutative diagram
\[
\begin{tikzcd}[column sep=large,row sep=large]
0\ar[r] & I\ar[r]\ar[d,bend right,dashed,"\vartheta|_I",swap] & A\ar[r,"p_1"]\ar[d,bend right,dashed,"\vartheta|_A",swap] & A/I\ar[r]\ar[d,bend right,dashed,"{[\vartheta]}",swap] & 0 \\
0\ar[r] & J\ar[r]\ar[u] & C\ar[r,"p_2"]\ar[u] & C/J\ar[r]\ar[u,"f",swap] & 0
\end{tikzcd}
\]
For all $c\in C$, since $\vartheta|_A$ is the identity on $C$,
\[
([\vartheta]\circ f)(p_2(c))=[\vartheta](p_1(c))=p_2(\vartheta|_A(c))=p_2(c) .
\]
Since $p_2$ is surjective, $[\vartheta]\circ f=\id_{C/J}$, which implies that $f$ is injective.
From now on, we will identify $C/J$ with the C*-subalgebra $f(C/J)$ of $A/I$.

In the quotient algebra $A/I$ one has
\begin{equation}\label{eq:inthequotient}
p_1(\xi_{i+1})=p_1(\xi_i)-p_1(z_i)p_1(z_i)^*=q^{2k}-p_1(z_i)p_1(z_i)^*
\end{equation}
and from \eqref{eq:qsphereC},
\[
p_1(z_i)^*p_1(z_i)=p_1(z_i)p_1(z_i)^*+(1-q^2)p_1(\xi_{i+1})
=q^2\,p_1(z_i)p_1(z_i)^*+(1-q^2)q^{2k} .
\]
If $k\neq +\infty$, putting $Y:=q^{-k}p_1(z_i)$ and $t:=q^2$, then \eqref{eq:satisfyingYY} is satisfied, which proves that
\[
\sigma\big(p_1(z_i)p_1(z_i)^*\big)\subseteq\big\{q^{2k}(1-q^{2m}):m\in\overline{\N}\big\} ,
\]
and then
\begin{equation}\label{eq:bothcases}
\sigma\big(p_1(\xi_{i+1})\big)=q^{2k}-\sigma\big(p_1(z_i)p_1(z_i)^*\big)\subseteq\big\{q^{2(k+m)}:m\in\overline{\N}\big\} .
\end{equation}
If $k=+\infty$, the equality \eqref{eq:inthequotient} becomes $p_1(\xi_{i+1})=-p_1(z_iz_i^*)$, and since the left hand side is a positive operator and the right hand side is a negative operator, $p_1(\xi_{i+1})=0$. In both cases,
$k\in\N$ and $k=+\infty$, the inclusion \eqref{eq:bothcases} is valid.

Now, under the identification of $C/J$ with a C*-subalgebra of $A/I$, 
$p_1(\xi_{i+1})$ and $p_2(\xi_{i+1})$ are the same operator (but in a different C*-algebra). Since the spectrum doesn't depend on the ambient C*-algebra,
\[
\sigma\big(p_1(\xi_{i+1})\big)=\sigma\big(p_2(\xi_{i+1})\big) .
\]
Going back to our character $\chi$, by construction of the ideal one has \mbox{$\chi(J)=0$}. Thus, the character $\chi:D_n\to\C$ induces a character $[\chi]:C/J\to\C$. 
From
\[
\chi(\xi_{i+1})=[\chi]\big(p_2(\xi_{i+1})\big) \in \sigma\big(p_2(\xi_{i+1})\big)
\]
and from \eqref{eq:bothcases} we get the desired result.
\end{proof}

Let $x_i:\Omega^n_q\to\R$ be the $i$-th Cartesian coordinate of $\R^n$ restricted to $\Omega^n_q$. We will identify $D_n$ with $C(\Omega^n_q)$ through the map that sends $\xi_i$ to $x_i$. (It is not difficult to verity that the set $\Omega^n_q$ is homeomorphic to the quantized $n$-simplex $\Delta^n_q$ of Rem.~\ref{rem:qsimp}.)

\begin{prop}\label{prop:inj}
The representation $\pi$ in Prop.~\ref{prop:rep} is faithful, for all $0<q<1$.
\end{prop}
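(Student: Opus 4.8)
The plan is to reduce faithfulness of $\pi$ to faithfulness of its restriction to the fixed-point subalgebra, and then to settle the latter by a density argument. The key tool is Lemma~\ref{lemma:faithcond}\ref{en:faithcondB}. Since $\mathbb{T}^{n+1}$ is compact and, by Prop.~\ref{prop:strongly}, the action $\alpha$ extends to a strongly continuous action on $C(S^{2n+1}_q)$, the triple $(C(S^{2n+1}_q),\mathbb{T}^{n+1},\alpha)$ is a C*-dynamical system. The representation $\pi$ of Prop.~\ref{prop:rep} is $\mathbb{T}^{n+1}$-covariant with respect to the unitaries $U$ of \eqref{eq:unitary}; since the covariance relation $U_{\mv{t}}\pi(a)U_{\mv{t}}^*=\pi(\alpha_{\mv{t}}(a))$ holds on the dense subalgebra $\mathcal{A}(S^{2n+1}_q)$ and both sides are norm-continuous in $a$ for each fixed $\mv{t}$, it persists on all of $C(S^{2n+1}_q)$. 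As $U$ is strongly continuous, $(\ell^2(\N^n\times\Z),\pi,U)$ is a covariant representation, so by Lemma~\ref{lemma:faithcond}\ref{en:faithcondB} it suffices to prove that $\pi$ restricted to $C(S^{2n+1}_q)^{\mathbb{T}^{n+1}}$ is faithful.

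By Prop.~\ref{prop:thetabis} we have $C(S^{2n+1}_q)^{\mathbb{T}^{n+1}}=D_n$, identified through Gelfand duality with $C(\Omega^n_q)$, the element $\xi_i$ of \eqref{eq:ri} corresponding to the $i$-th coordinate function $x_i$. Since $\pi|_{D_n}$ is a $*$-homomorphism between C*-algebras, faithfulness is the same as injectivity. I would argue this as follows. Each operator $\pi(\xi_i)$ is diagonal in the standard basis, with $\pi(\xi_i)\ket{k_0,\ldots,k_n}=q^{2(k_0+\ldots+k_{i-1})}\ket{k_0,\ldots,k_n}$, as follows from \eqref{eq:diagonal} together with $\xi_i=1-\sum_{j=0}^{i-1}z_jz_j^*$. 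Hence for every $a\in D_n$ the operator $\pi(a)$ is diagonal and acts on $\ket{k_0,\ldots,k_n}$ by the scalar $a\big(F_n(k_0,\ldots,k_{n-1})\big)$, where $F_n$ is the embedding of Lemma~\ref{lemma:one} and $a$ is read as a function on $\Omega^n_q$. Consequently $\pi(a)=0$ forces $a$ to vanish on the set $F_n(\N^n)$ of joint eigenvalues described in Prop.~\ref{prop:joint}\ref{en:jointB}.

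It then remains to observe that $F_n(\N^n)$ is dense in $\Omega^n_q$. By Lemma~\ref{lemma:one}, $\Omega^n_q=F_n(\overline{\N}^n)$, and $F_n$ is a closed embedding, in particular continuous; since $\N$ is dense in its one-point compactification $\overline{\N}$, the product $\N^n$ is dense in $\overline{\N}^n$, and therefore $F_n(\N^n)$ is dense in $F_n(\overline{\N}^n)=\Omega^n_q$. A continuous function $a\in C(\Omega^n_q)$ vanishing on this dense set is identically zero, so $\pi|_{D_n}$ is injective, hence faithful, and the criterion quoted above yields faithfulness of $\pi$. The only genuinely delicate point is the passage of the covariance relation from the dense polynomial algebra to the whole enveloping C*-algebra, which I expect to be the main thing to state carefully, although it follows from continuity and density; the density of $F_n(\N^n)$ in $\Omega^n_q$ is then routine once Lemma~\ref{lemma:one} is in hand.
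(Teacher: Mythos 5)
Your proposal is correct and follows essentially the same route as the paper's proof: reduce to the fixed-point algebra via Lemma~\ref{lemma:faithcond}\ref{en:faithcondB}, identify $C(S^{2n+1}_q)^{\mathbb{T}^{n+1}}=D_n\cong C(\Omega^n_q)$ via Prop.~\ref{prop:thetabis}, and then check faithfulness of $\pi|_{D_n}$ by the explicit diagonal action on basis vectors, using that the joint eigenvalues $F_n(\N^n)$ are dense in $\Omega^n_q$. Your extra care in verifying the covariance relation on the dense subalgebra and the density argument via Lemma~\ref{lemma:one} only makes explicit what the paper leaves implicit.
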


\begin{proof}
Let $A=C(S^{2n+1}_q)$, $G=\mathbb{T}^{n+1}$ and observe that $A^G=D_n=C(\Omega^n_q)$ (by Prop.~\ref{prop:thetabis}). It is enough to show that the restriction $\pi|_{A^G}=\pi|_{C(\Omega^n_q)}$ is faithful (cf.~Lemma \ref{lemma:faithcond}\ref{en:faithcondB}). This is an explicit computation. For all $f\in C(\Omega^n_q)$ and all $k_1,\ldots,k_n\in\N$ one has
\[
\pi(f)\ket{k_1,\ldots,k_n}=f(q^{2k_1},\ldots,q^{2(k_1+\ldots+k_n)})\ket{k_1,\ldots,k_n}
\]
If $\pi(f)=0$, then $f$ is zero on a dense subset of $\Omega^n_q$, which implies $f=0$.
\end{proof}

\begin{thm}\label{thm:indep}
$C(S^{2n+1}_q)\cong C(S^{2n+1}_0)$ for all $0<q<1$.
\end{thm}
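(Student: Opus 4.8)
The plan is to show that the *-homomorphism $\pi\colon C(S^{2n+1}_q)\to\mathcal{B}(\ell^2(\N^n\times\Z))$ — the extension of the representation of Prop.~\ref{prop:rep} to the C*-enveloping algebra via the universal property — is a *-isomorphism onto $C(S^{2n+1}_0)$. All the necessary ingredients are already in place, so the argument is essentially a one-line combination of the preceding results. Recall that $C(S^{2n+1}_0)$ has been identified, through Theorem~\ref{thm:graphq0}\ref{en:qzeroB}, with the concrete C*-subalgebra of $\mathcal{B}(\ell^2(\N^n\times\Z))$ generated by $Z_0,\ldots,Z_n$.

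First, I would pin down the image of $\pi$. Lemma~\ref{lemma:sub} provides the inclusion $\pi\big(C(S^{2n+1}_q)\big)\subseteq C(S^{2n+1}_0)$, while Lemma~\ref{lemma:sup} provides the reverse inclusion $\pi\big(C(S^{2n+1}_q)\big)\supseteq C(S^{2n+1}_0)$. Together they give the equality
\[
\pi\big(C(S^{2n+1}_q)\big)=C(S^{2n+1}_0),
\]
so that $\pi$ is surjective as a map onto $C(S^{2n+1}_0)$. No separate closure or approximation step is needed here, since a *-homomorphism of C*-algebras automatically has closed range, and the two inclusion lemmas already identify that range with the target C*-algebra.

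Next, injectivity of $\pi$ is exactly the content of Prop.~\ref{prop:inj}, obtained from the faithful conditional expectation $\vartheta$ and the faithfulness of $\pi$ on the fixed-point subalgebra $C(S^{2n+1}_q)^{\mathbb{T}^{n+1}}=D_n=C(\Omega^n_q)$ via Lemma~\ref{lemma:faithcond}\ref{en:faithcondB}. Combining the two observations, $\pi$ is an injective *-homomorphism whose image is precisely $C(S^{2n+1}_0)$, hence a *-isomorphism $C(S^{2n+1}_q)\xrightarrow{\ \cong\ }C(S^{2n+1}_0)$, which is the claim.

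Since the entire substance of the theorem has been carried by the three preceding results, there is no genuine obstacle remaining at this stage: the hard work lay in Lemmas~\ref{lemma:sub} and~\ref{lemma:sup} (controlling the image of $\pi$ through the explicit telescoping-series expressions for $\pi(z_i)$ and through functional calculus on the diagonal operators $\pi(\xi_i)$) and in Prop.~\ref{prop:inj} (reducing faithfulness to the commutative fixed-point algebra). The final theorem is then just the assembly of surjectivity and injectivity into an isomorphism.
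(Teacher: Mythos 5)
Your proposal is correct and follows exactly the paper's own proof: both obtain the image equality $\pi\bigl(C(S^{2n+1}_q)\bigr)=C(S^{2n+1}_0)$ from Lemmas~\ref{lemma:sub} and~\ref{lemma:sup}, injectivity from Prop.~\ref{prop:inj}, and conclude that $\pi$ induces the desired isomorphism. No gaps; the assembly is the same as in the paper.
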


\begin{proof}
Let $\pi:C(S^{2n+1}_q)\to\mathcal{B}(\ell^2(\N^n\times\Z))$ be the representation in Prop.~\ref{prop:rep}. From Lemma \ref{lemma:sub} and Lemma \ref{lemma:sup} it follows that the image of $\pi$ is $C(S^{2n+1}_0)$.
It follows from Prop.~\ref{prop:inj} that $\pi$ is injective, hence it induces an isomorphism $C(S^{2n+1}_q)\to C(S^{2n+1}_0)$.
\end{proof}

Thus, the C*-algebra $C(S^{2n+1}_q)$ is independent of the value of the deformation parameter, and isomorphic to a graph C*-algebra for all $q\in\rinterval{0}{1}$.
It is worth mentioning the different behaviour of coordinate algebras: for $n\geq 1$ and
$q,q'\in\interval{0}{1}$, one has $\mathcal{A}(S^{2n+1}_q)\cong\mathcal{A}(S^{2n+1}_{q'})$ if and only if $q=q'$ (see \cite{Dan24b} for a proof).

\smallskip

\begin{samepage}
\begin{center}
\textsc{Acknowledgements}
\end{center}

\noindent
I would like to thank P.M.~Hajac, J.~Kaad, G.~Landi, M.H.~Mikkelsen, and W.~Szyma{\'n}ski for the many enlightening discussions about quantum spheres and (graph) C*-algebras.
The Author is a member of INdAM-GNSAGA (Istituto Nazionale di Alta Matematica ``F.\ Severi'') -- Unit\`a di Napoli 
and of INFN -- Sezione di Napoli.
This work is partially supported by the University of Naples Federico II under the grant FRA 2022 \emph{GALAQ: Geometric and ALgebraic Aspects of Quantization}.
\end{samepage}

\smallskip


\begin{thebibliography}{1}
\itemsep=1pt

\bibitem{AAM17}
G. Abrams, P. Ara and M.S. Molina, \textit{Leavitt path algebras}, Springer 2017. \doi{10.1007/978-1-4471-7344-1}

\bibitem{ADL15}
F. Arici, F. D'Andrea and G. Landi, \textit{Pimsner algebras and circle bundles}, in ``Noncommutative Analysis, Operator Theory and Applications'', pp.~1--25, Birkh{\"a}user 2016.
\doi{10.1007/978-3-319-29116-1_1}

\bibitem{AKL14}
F. Arici, J. Kaad and G. Landi, \textit{Pimsner algebras and Gysin sequences from principal circle actions},
J. Noncommut. Geom. 10 (2016), 29--64. \doi{10.4171/JNCG/228}

\bibitem{BPRS}
T. Bates, D. Pask, I. Raeburn and W. Szyma\'nski, \textit{The C*-algebras of row finite graphs},
New York J.~Math. 6 (2000), 307--324. \web{https://www.emis.de/journals/NYJM/nyjm/nyjm/j/2000/6-14.html}

\bibitem{Bla85}
B. Blackadar, \textit{Shape theory for C*-algebras}, Math. Scand. 56 (1985), 249--275. \doi{10.7146/math.scand.a-12100}

\bibitem{BO08}
N.P. Brown and N. Ozawa, \textit{C*-algebras and finite-dimensional approximations}, AMS 2008. \isbn{9781470421182}

\bibitem{Dan24a}
F. D'Andrea, \textit{The graph groupoid of a quantum sphere} (2024), \arxiv{2407.02169} [math.OA].

\bibitem{Dan24b}
F. D'Andrea, \textit{Isomorphisms of quantum spheres} (2024), \arxiv{2406.17288} [math.QA].

\bibitem{DL12}
F. D'Andrea and G. Landi, \textit{Geometry of quantum projective spaces}, Keio COE Lecture Series on Mathematical Science ``Noncommutative Geometry and Physics 3'' (2013), 373--416. \doi{10.1142/9789814425018_0014}

\bibitem{DL15}
F. D'Andrea and G.\ Landi, \textit{Quantum weighted projective and lens spaces}, Commun. Math. Phys. 340 (2015), 325--35. \doi{10.1007/s00220-015-2450-5}.
	
\bibitem{HS02}
J.H.~Hong and W.~Szyma{\'n}ski, \textit{Quantum spheres and projective spaces
 as graph algebras}, Commun. Math. Phys. 232 (2002), 157--188. \doi{10.1007/s00220-002-0732-1}

\bibitem{KS97}
A. Klimyk and K. Schm{\"u}dgen, \textit{Quantum groups and their representations}, Springer 2012. \doi{10.1007/978-3-642-60896-4}

\bibitem{Mac10}
S. Mac Lane, \textit{Categories for the working mathematician}, Springer 2010. \doi{10.1007/978-1-4757-4721-8}

\bibitem{MK22}
M.H. Mikkelsen and J. Kaad, \textit{The Haar state on the Vaksman-Soibelman quantum spheres}, Math. Scand. 129 (2023), 337--356. \doi{10.7146/math.scand.a-136693}

\bibitem{Mon93}
S. Montgomery, \textit{Hopf algebras and their actions on rings}, RCSM no. 82, Amer. Math. Soc., Providence, RI, 1993.

\bibitem{Pod87}
P.~Podle{\'s}, \textit{Quantum spheres}, Lett. Math. Phys. 14 (1987) 193--202. \doi{10.1007/BF00416848}

\bibitem{R05}
I.~Raeburn, \textit{Graph algebras}, CBMS Regional Conference Series in Mathematics 103, AMS 2005. \isbn{9780821836606}

\bibitem{She97bis}
A.J.L. Sheu, \textit{Compact quantum groups and groupoid C*-algebras}, J. Funct. Anal. 144 (1997), 371--393. \doi{10.1006/jfan.1996.2999}

\bibitem{She97}
A.J.L. Sheu, \textit{Quantum spheres as groupoid C*-algebras}, The Quarterly Journal of Mathematics 48 (1997), 503--510. \doi{10.1093/qmath/48.4.503}

\bibitem{VS91}
L.L. Vaksman and Y.S. Soibelman, \textit{The algebra of functions on quantum $SU(n + 1)$ group and odd-dimensional quantum spheres}, Leningrad Math. J. 2 (1991) 1023--1042; translation from Algebra i Analiz 2 (1990), 101--120. \web{http://mi.mathnet.ru/aa208}

\bibitem{Wor80}
S.L.~Woronowicz, \textit{Pseudospaces, pseudogroups, and Pontryagin duality},
  Math.\ problems in th.\ phys., Lect.\ Notes in Phys. 116
  (Springer 1980), pp.~407--412. \doi{10.1007/3-540-09964-6_354}

\bibitem{Wor87}
S.L.~Woronowicz, \textit{Twisted $SU(2)$ group. An example of a non-commutative
  differential calculus}, Publ. Res. Inst. Math. Sci. 23 (1987) 117--181. \doi{10.2977/PRIMS/1195176848}

\end{thebibliography}
\end{document}